\documentclass[11pt]{article}

\usepackage{amsmath}
\usepackage{epsfig, amssymb, amsfonts, dsfont}
\usepackage{amsthm}
\usepackage{amstext}
\usepackage{amsopn}
\usepackage{mathrsfs}
\usepackage{subfigure}
\usepackage{graphicx}
\usepackage[left=2.3cm,top=2cm,right=2.3cm,bottom=2cm, nohead,foot=1cm]{geometry}
\usepackage[makeroom]{cancel}
\usepackage{color}
\usepackage{enumerate}
\usepackage{pdfrender,xcolor}
\usepackage{comment}
\usepackage{stackrel}
\usepackage{mathtools}
\usepackage{amsbsy}
\usepackage{tikz}
  \usetikzlibrary{automata,topaths}

\makeatletter
\newcommand*\rel@kern[1]{\kern#1\dimexpr\macc@kerna}
\newcommand*\widebar[1]{%
  \begingroup
  \def\mathaccent##1##2{%
    \rel@kern{0.8}%
    \overline{\rel@kern{-0.8}\macc@nucleus\rel@kern{0.2}}%
    \rel@kern{-0.2}%
  }%
  \macc@depth\@ne
  \let\math@bgroup\@empty \let\math@egroup\macc@set@skewchar
  \mathsurround\z@ \frozen@everymath{\mathgroup\macc@group\relax}%
  \macc@set@skewchar\relax
  \let\mathaccentV\macc@nested@a
  \macc@nested@a\relax111{#1}%
  \endgroup
}
\makeatother

\numberwithin{equation}{section}

\newtheorem{theorem}{Theorem}[section]

\newtheorem{lemma}[theorem]{Lemma}

\newtheorem{proposition}[theorem]{Proposition}

\theoremstyle{definition}
\newtheorem{definition}[theorem]{Definition}
\newtheorem{remark}[theorem]{Remark}

\newcommand{\R}{{\mathbb R}}
\newcommand{\E}{{\mathbb E}}

\DeclareFontFamily{U}{mathx}{\hyphenchar\font45}
\DeclareFontShape{U}{mathx}{m}{n}{
      <5> <6> <7> <8> <9> <10>
      <10.95> <12> <14.4> <17.28> <20.74> <24.88>
      mathx10
      }{}
\DeclareSymbolFont{mathx}{U}{mathx}{m}{n}
\DeclareMathSymbol{\bigtimes}{1}{mathx}{"91}

\usepackage{relsize}

\newcommand{\Z}{{\mathbb Z}}
\newcommand{\N}{{\mathbb N}}

\newcommand{\sbf}{{s}}
\newcommand{\bbf}{{b}}

\date{\vspace{-9ex}}

\title{Weak-disorder limit for directed polymers on critical \\ hierarchical graphs  with vertex disorder}

  \author{ \textbf{Jeremy Clark}\footnote{ {\tt
jeremy@olemiss.edu}}  \,\, \text{ and} \,\, \textbf{Casey Lochridge}\footnote{ {\tt
crlochri@olemiss.edu}} \vspace{.1cm}  \\  University of Mississippi, Department of Mathematics  \vspace{.4cm} }

\begin{document}
\maketitle

\begin{abstract}
We study models for a directed polymer in a random environment (DPRE) in which the polymer traverses a hierarchical diamond graph and the random environment is defined through random variables attached to the vertices. For these models, we prove a distributional limit theorem for the partition function in a limiting regime wherein the system grows as the coupling of the polymer to the random environment is appropriately attenuated.  The sequence of diamond graphs is determined  by a choice of a branching number $b\in \{2,3,\ldots\}$ and segmenting number $s\in \{2,3,\ldots\}$, and  our focus is on the critical case of the model where $b=s$.  This extends recent work in the critical case of analogous models with disorder variables placed at the edges of the graphs rather than the vertices. 
\end{abstract}

\section{Introduction}\label{SectionIntro}

A \textit{directed polymer in a random environment} (DPRE) is a probabilistic model motivated by statistical mechanics that is mathematically defined as a random measure on pathways traversing some discrete or continuous spatial structure. The most studied class of DPRE models begins with an $N$-step $d$-dimensional simple symmetric random walk, in other terms the uniform probability measure on maps $p:\{0,1,...,N\}\rightarrow \Z^{d}$ satisfying $p(0)=0$ and $\|p(j)-p(j-1)\|_2=1$ for each $j\geq 1$. Then a family of i.i.d.\ random variables $\{\omega_{j,a}\}$ indexed by coordinates ${(j,a) \in \{1,...,N\} \times \Z^d}$  and an inverse temperature parameter $\beta \geq 0$ are used to define a random measure $\mathbf{M}_{\beta,N}^{\omega}$ on these nearest-neighbor paths through 
\begin{align}\label{M}
\mathbf{M}_{\beta,N}^{\omega}(p)\, := \,\textup{exp}\big\{\beta H^\omega_N(p) -N\lambda(\beta) \big\}   \,, 
\end{align}
where $H^\omega_N(p):= \sum_{j=1}^{N}  \omega_{j,p(j)}$ is called the \textit{path energy}, and $\lambda(\beta):=\log\big(\mathbb{E}\big[ e^{\beta\omega_{j,a}}  \big]\big)  $ is the cumulant generating function of $\omega_{j,a}$.  We denote the total mass of the random path measure $\mathbf{M}_{\beta,N}^{\omega}$ by $W_{\beta,N}^{\omega}$, which we refer to as the \textit{partition function}. The collection $\{\omega_{j,a}\}$ comprises the random environment, encoding localized impurities that generate a reweighing of paths through the Gibbsian formalism~(\ref{M}).  In the standard case, the random variables $\omega_{j,a}$ are assumed to have mean zero, variance one, and finite exponential moments.  The parameter $\beta$ effectively determines the coupling strength of the polymer to its random environment.  For fixed values of  $d$ and  $\beta$, the system is said to be \textit{strongly disordered} if the presence of the random environment has a marked effect on the behavior of the polymer as the size of the system scales up ($N\gg1 $); otherwise, the system is termed \textit{weakly disordered}.  It is known that when $d\leq 2$ these DPRE models are \textit{disorder relevant}, meaning that strong disorder occurs for all  $\beta>0$. Conversely, DPRE models are \textit{disorder irrelevant} when $d\geq 3$; that is, for small enough fixed values of $\beta>0$ the environmental disorder has a diminishing effect as $N\uparrow \infty$.  See the monograph~\cite{Comets} by Comets for an account of important developments in the theory of DPRE models and the monograph~\cite{Giacomin} by Giacomin for a discussion of disorder relevance versus irrelevance in the context of pinning models.

One natural direction within the study of disorder relevant models is to consider weak coupling limits in which the length $N$ of the polymer grows to $\infty$ as the inverse temperature parameter $\beta \equiv \beta_N$ vanishes under an appropriate scaling dependence on $N$.  In the article~\cite{alberts}, Alberts, Khanin, and Quastel introduced a scaling limit of this type in the case $d=1$, wherein the inverse temperature is scaled as $\beta_N:=\hat{\beta} N^{-1/4} $ for some value of the parameter $\hat{\beta}\in [0,\infty)$.  The authors referred to this weak coupling limit as the \textit{intermediate disorder regime} because it explores a vanishing window of behavior as $N \uparrow \infty$ between the trivial weak disorder case of $\beta=0$ and the strong disorder that prevails when $\beta$ is held fixed with any strictly positive value.  In particular, \cite{alberts} proved that the partition function $W_{\beta_N,N}^{\omega}$ converges in distribution  as $N\uparrow \infty$  to a nontrivial  limit law, $W_{\hat{\beta}}$.    The family of limit laws $\{W_{\hat{\beta}}\}_{\hat\beta\in[0,\infty)}$ have mean one and undergo a transition from weak disorder to strong disorder as the parameter $\hat{\beta}$ increases from $0$ to $\infty$ in the sense that $W_{0}=1$ almost surely and $W_{\hat{\beta}}$ converges in distribution to $0$ as $\hat{\beta}\uparrow \infty$.  For each $m>1$, the moment $\mathbb{E}\big[ W_{\hat{\beta}}^m\big]$ is finite for all $\hat{\beta}$ but diverges to $\infty$ as $\hat{\beta}\uparrow \infty$.

Since the DPRE models are disorder irrelevant when $d\geq 3$, the case $d=2$ is the borderline of disorder relevance. For $d=2$, the pursuit of an intermediate disorder regime result analogous to~\cite{alberts} introduces nontrivial technical and conceptual difficulties. The subtlety of this case begins with finding an appropriate choice of vanishing inverse temperature scaling $\beta_N$ with $N\uparrow \infty$. In the article~\cite{CSZ1}, Caravenna, Sun, and Zygouras proved that when $\beta_N=\frac{\hat{\beta}}{\log^{1/2} N }\big(1+\mathit{o}(1)\big)$ the partition functions $W_{\beta_N,N}^{\omega}$ have the convergence in distribution
\begin{align}\label{IDR_d=2}
W_{\beta_N,N}^{\omega} \hspace{.5cm} \Longrightarrow \hspace{.5cm} W_{\hat{\beta}}\,:=\,\begin{cases}  \,\textup{Lognormal}\Big(-\frac{1}{2}\sigma_{\hat{\beta}}^2 ,\,\sigma^2_{\hat{\beta}}\Big) & \hspace{.3cm}  \hat{\beta} < \sqrt{\pi} \,,    \\ \, 0    & \hspace{.3cm}  \hat{\beta} \geq {\sqrt{\pi}}  \,, \end{cases}   
\end{align}
for 
$\sigma_{\hat{\beta}}^2 := \log\frac{\pi}{\pi-\hat{\beta}^2} $.  Hence,  there is a critical point, $\hat{\beta}_c := \sqrt{\pi}$, for the  scaling parameter $\hat{\beta}$  beyond which the partition function $W_{\beta_N,N}^{\omega}$ exhibits strong disorder in the limit $N\uparrow \infty$.  As the parameter $\hat{\beta} $ approaches $\sqrt{\pi}$ from below, the lognormals $W_{\hat{\beta}}$ converge in distribution to zero while maintaining mean one and having higher moments that diverge to $\infty$ (in particular, this is found in the second moment of $W_{\hat{\beta}}$, which is  $\frac{\pi}{\pi-\hat{\beta^2}}$). Thus, the limit distribution $W_{\hat{\beta}}$ is weakly continuous in the parameter $\hat{\beta}$, but the higher moments have an infinite discontinuity at $\hat{\beta}_c$.

 In~\cite{CSZ2,CSZ4} the same authors introduced a more refined  scaling procedure that magnifies a vanishing region around the critical point $\hat{\beta} = {\sqrt{\pi}}$ arising in~(\ref{IDR_d=2}) and involves a ``randomized" starting point for the polymer:
\begin{itemize}
    \item For each $x\in \mathbb{Z}^2$, let $W_{\beta,N}^{\omega}(x) $ denote the partition function analogous to $W_{\beta,N}^{\omega}$ for polymers that begin at $x$.  
    
    \item For any continuous function $\psi: \R^2\rightarrow \R$ with compact support, define the mollified partition function $ W_{\beta,N}^{\omega}(\psi) :=\frac{1}{N} \sum_{x\in \Z^2 }\psi\big(\frac{x}{\sqrt{N}}\big) W_{\beta,N}^{\omega}(x)$.

    \item For a parameter value $\hat{\vartheta}\in \R $, let $\big(  \beta_{N, \hat{\vartheta}}\big)_{N\in \mathbb{N}}$ be a sequence in  $(0,\infty) $ with the large-$N$ asymptotics
\begin{align}\label{BetaCSZ}
\beta_{N, \hat{\vartheta}}\,=\,\frac{\sqrt{\pi}}{\log^{\frac{1}{2}}N}\left(1\,-\,\frac{\kappa_3\sqrt{\pi}}{2\log^{\frac{1}{2}} N}\,+\,\frac{\hat{\vartheta}+\pi\big(\frac{5}{4}\kappa^2_3-\frac{1}{2}-\frac{7}{12}\kappa_4\big)}{2\log N}\right)\,+\,\mathit{o}\bigg(\frac{1}{\log^{\frac{3}{2}} N}\bigg)\,,
\end{align}
in which $\kappa_3 :=\mathbb{E}[\omega^3]$ and $\kappa_4 :=\mathbb{E}[\omega^4]-3$ are the third and fourth cumulants of the disorder variables.\footnote{The parameter $\hat{\vartheta}$ is related to $\vartheta$ in~\cite{CSZ4} through  $\vartheta = \hat{\vartheta} +\gamma_{\textup{EM}}+\log 16-\pi$, where $\gamma_{\textup{EM}}$ is the Euler-Mascheroni constant.} 
\end{itemize}
The inverse temperature asymptotic~(\ref{BetaCSZ}) satisfies $\beta_{N,\hat{\vartheta}}=\frac{\sqrt{\pi}}{\log^{1/2} N }\big(1+\mathit{o}(1)\big)$ and is chosen so that the variance of the mean one random variable $ \textup{exp}\big\{\beta_{N,\hat{\vartheta}}\,\omega -\lambda(\beta_{N,\hat{\vartheta}}) \big\}  $ has the following large-$N$ form:  $ \frac{\pi}{\log N}+\frac{\pi \hat{\vartheta}}{\log^2 N}\big(1+\mathit{o}(1)\big)$.
The  above scaling regime  is closely related to the critical  scaling limit for the $2d$ stochastic heat equation (SHE) introduced by Bertini and Cancrini in~\cite{BC}, for which Gu, Quastel, and Tsai provided a functional analysis method for handling the convergence of the positive integer moments in~\cite{GQT}; see also the related work~\cite{Chen} by Chen. The article~\cite{CSZ4} proved tightness of the sequence of random variables $\big(W_{\beta_{N,\hat{\vartheta}},N}^{\omega}(\psi)\big)_{N\in \mathbb{N}} $ for any test function $\psi\in C_c\big(\R^2\big)$---which can be interpreted in a broader sense as tightness for a sequence of random $\sigma$-finite Borel measures  on $\R^2$---and that all distributional limits of the random measures have the same covariance structure, depending on the parameter $\hat{\vartheta}\in \R$. In their more recent  works~\cite{CSZ5,CSZ6},  Caravenna, Sun, and Zygouras   deduced the  uniqueness of these distributional limits and showed that the limiting random measure law is not a Gaussian multiplicative chaos.

In this article, we study a family of DPRE models defined on \textit{diamond hierarchical graphs}, which we define in Section~\ref{SecDHG}.  Hierarchical graphs were introduced in physics literature as a reduced-complexity medium for studying various phenomena; see for instance~\cite{Berker,Cook,Kaufman} on Ising/Potts models, \cite{Derrida} on directed polymers, and~\cite{Derrida2} on wetting transitions. Mathematicians subsequently adopted the hierarchical setting to explore various probabilistic and dynamical systems topics in mathematical physics,~\cite{Pavel2,Pavel,Wehr,Goldstein,GLT,Lacoin,Hambly,Pavel1,Ruiz} being a non-exhaustive list of such works.   Although the hierarchical models are artificial, they can provide insights leading to results on standard models. For instance, Lacoin's work in~\cite{Lacoin0} on the free energy behavior at high temperature for rectangular lattice polymers in the $d=1$ and $d=2$ cases took partial inspiration from his prior work with Giacomin and Toninelli on hierarchical pinning models in~\cite{GLT}. For a fixed \textit{branching} parameter $b \in \{2,3,\ldots\}$ and  \textit{segmenting} parameter $s\in \{2,3,\ldots\}$, the diamond graphs are recursively constructed using a graphical embedding procedure, generating a nexus of directed paths between two opposing nodes.  In~\cite{Lacoin}, Lacoin and Moreno studied the phase diagram for diamond graph DPRE models as a function of the parameters $b$ and $s$ and the inverse temperature $\beta$, showing that there is a rough analogy in the disorder behavior between the following cases for rectangular lattice polymers:
\begin{align*}
    & b<s  \hspace{.7cm} \longleftrightarrow  \hspace{.7cm}    d=1\,,     \\
    & b=s  \hspace{.7cm} \longleftrightarrow  \hspace{.7cm}    d=2\,,     \\
    & b>s  \hspace{.7cm} \longleftrightarrow  \hspace{.7cm}    d \geq 3\,. 
\end{align*}
 In particular, disorder relevance holds for the diamond graph DPRE models only when $b\leq s$, where $b=s$ is the marginal case.  These observations  hold whether the disordered environment is formulated through attaching disorder variables to the vertices  of the diamond graphs or to their edges. The disorder relevance of the subcritical case $b<s$ and the critical case $b=s$ allows for the possibility of performing an intermediate disorder regime analysis comparable with~\cite{alberts}. Such an analysis was carried out for $b<s$ in~\cite{US}, yielding a distributional limit theorem for the partition functions analogous to that in~\cite{alberts} and covering models with either vertex  disorder or edge disorder. For models with edge disorder, the article~\cite{Clark2} developed this analysis further to include a distributional limit theorem for the $b=s$ case within a critical scaling window similar to that discussed above for the $d=2$ rectangular lattice polymers. 
 
 The goal of this text is to extend the distributional limit result for the critical ($b=s$) edge-disorder models in~\cite{Clark2} to the case of vertex  disorder, which some readers will find to be a more natural convention.  The distributional recurrence relations for the edge-disorder partition functions are homogeneous in a sense that  the vertex-disorder counterparts are not; see the remark following~(\ref{PartHierSymmII}) in the next section. As a consequence, the fine-tuning of the inverse temperature asymptotics to induce the  convergence of the variance of the partition function $\mathbf{W}_r^{b,b}$---a precondition for formulating  the limit theorem---is  more intricate for the vertex-disorder model. Our analysis proceeds by showing that the vertex-disorder model can be approximated by a smaller related edge-disorder model through removing a portion of the disorder variables, yielding  a negligible error. Our analysis requires one  particularly delicate step (Lemma~\ref{LemMtilde}), which is to determine the appropriate inverse temperature scaling of the original vertex model from the needed variance scaling of its edge-disordered reduction.

After establishing notation and recalling a result from~\cite{Clark2}, the following section concludes with a precise statement of our main result, Theorem~\ref{ThmMain}, whose proof comes at the end of Section~\ref{SecMainThmOutline}. The proof assumes three technical lemmas, which we prove in Section~\ref{SecThreeLemmas}.

\section{The setup and a statement of the main result} \label{SectionMainResult}

In Section~\ref{SecSubMain} below, we present our main result, a distributional limit theorem for partition functions defined from hierarchical DPRE models. As a preliminary, in Section~\ref{SecDHG} we construct the family of diamond graphs, each providing a structure on which to define a space of interweaving directed pathways. Section~\ref{SecGibbs} introduces the formalism for our disorder model, which is a  measure on directed paths (polymers) randomized through a Gibbsian multiplicative noise factor depending on a collection of i.i.d.\ random variables attached to the vertices of the diamond graph. Although we concentrate on the critical case ($b=s$), Section~\ref{SecSubCri} includes the statement from~\cite{US} of a subcritical ($b<s$) analog  of our main result for the purpose of comparison. For some motivating context, Section~\ref{SecSca} recalls a previous incomplete result on the critical case.

\subsection{Construction of the diamond hierarchical graphs}\label{SecDHG}

We begin by recursively defining a sequence of graphs $\big(D_n^{b,s}\big)_{n\in\N_0}$, where $\N_0$  denotes the set of nonnegative integers. Let $D_0^{b,s}$ denote the graph formed by two \textit{root vertices} $A$ and $B$ with a single edge between them, and let $D_1^{b,s}$ be the graph consisting of $b$ parallel branches connecting $A$ to $B$, each branch having $s$ edges running in series. For every remaining $n \in \N$, we construct $D_n^{b,s}$  by substituting each edge $h$ of the graph $D_{1}^{b,s}$ with an embedded copy of $D_{n-1}^{b,s}$, where the root vertices $A$ and $B$ of the embedded graph take the positions of the vertices incident to $h$; see Figure~\ref{Fig1}.
\begin{figure}[hbt!]
\centering
\includegraphics[scale=.6]{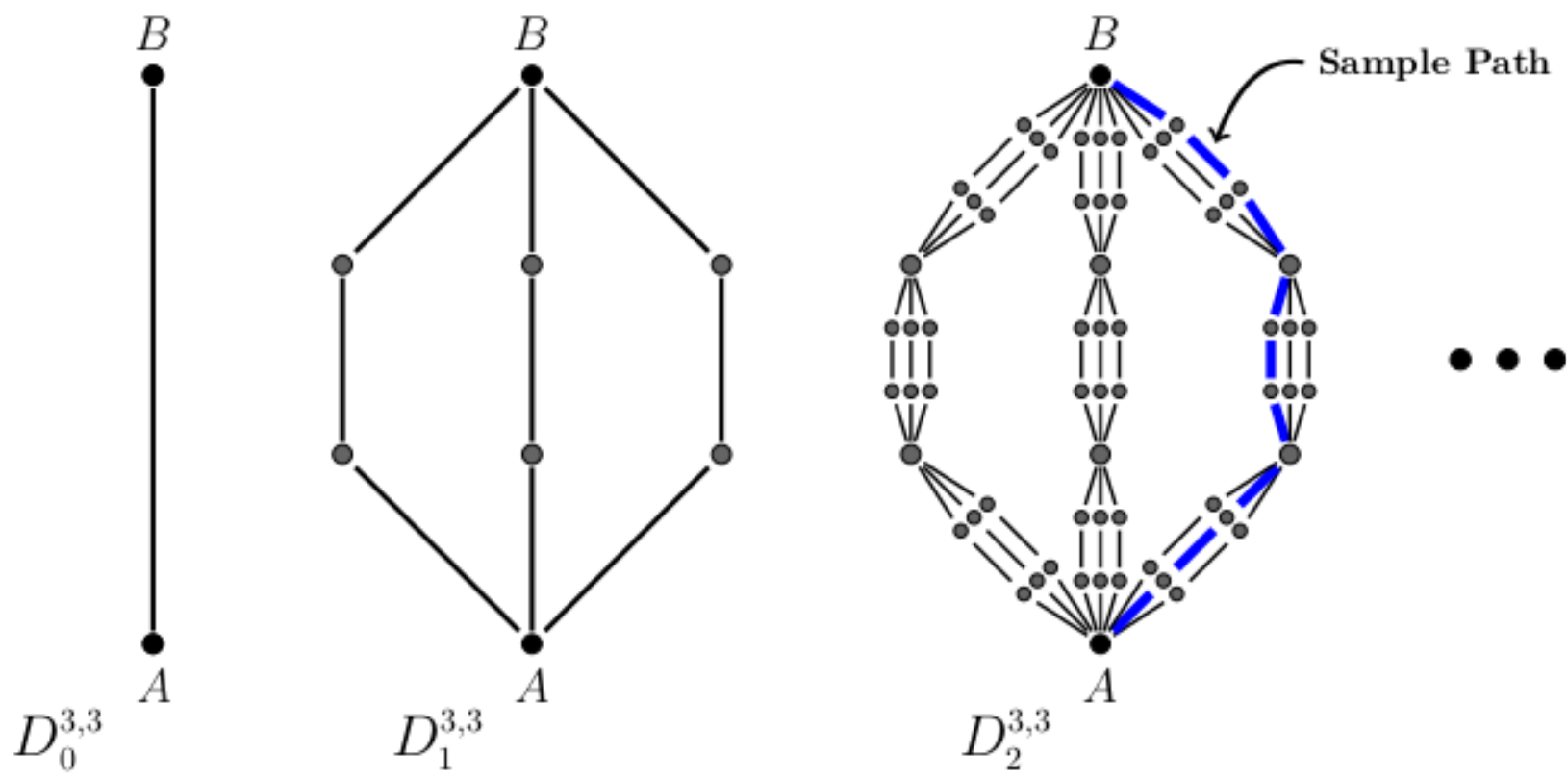}
\begin{minipage}{.81\textwidth}
\caption{\label{Fig1} Displayed above are the first three recursively-defined diamond graphs with $\bbf=\sbf=3$.  A path $p\in \Gamma_2^{3,3}$ is highlighted in the diagram of the second-generation diamond graph. }
\end{minipage}
\end{figure}
For $n \in \N$, let $V_n^{b,s}$ denote the set of non-root vertices on the diamond graph $D_n^{b,s}$, and let $E_n^{b,s}$ denote the set of edges on the diamond graph $D_n^{b,s}$. Henceforth, the term \textit{vertex} will refer only to non-root vertices. Note that vertex sets of the diamond graphs have an embedding property in the sense that $V_{n-1}^{b,s}$ is canonically identifiable with a subset of $V_n^{b,s}$ for each $n \in \N$. Through this interpretation, we refer to $V_n^{b,s} \backslash  V_{n-1}^{b,s}$  as the set of \textit{generation-$n$ vertices}. In other words, the generation-$n$ vertices are those that appear in $D_n^{b,s}$ but not  $D_{n-1}^{b,s}$. For  $k,n \in \N$ with $k \leq n$, the edge set $E_k^{b,s}$ does not have such a direct canonical embedding in $E_n^{b,s}$. However, each edge in $E_k^{b,s}$ is canonically identifiable with  a subgraph of $D_n^{b,s}$ that is isomorphic to $D_{n-k}^{b,s}$, and the  edge sets  of these embedded copies of $D_{n-k}^{b,s}$ form a partition of $E_{n}^{b,s}$.  This hierarchical structure implies that $\big|E_n^{b,s}\big|=(bs)^n$ and $\big|V_n^{b,s} \backslash V_{n-1}^{b,s}\big|= b(s-1)(bs)^{n-1}$.
 
Each diamond graph $D_n^{b,s}$ determines a set  $\Gamma_n^{b,s}$ of \textit{directed paths} from $A$ to $B$, which are maps $p:\{1,...,s^n-1\}\rightarrow V_n^{b,s}$ such that the vertex $p(1)$ is adjacent to $A$, the vertex $p(s^n-1)$ is adjacent to $B$, and the vertices $p(k-1)$ and $p(k)$ are adjacent for all $k \in \{2,...,s^n-1\}$. This definition ensures that each directed path $p \in \Gamma_n^{b,s}$ progresses monotonically from $A$ to $B$.

\subsection{A random Gibbsian measure on directed paths}\label{SecGibbs}

Let $\{\omega_a\}_{a \in V_n^{b,s}}$  be a family of i.i.d.\ centered random variables with variance one such that $\E[e^{\beta \omega_a}]<\infty$ for all $\beta \in [0,\infty)$. For each path $p \in \Gamma_n^{b,s}$, we define the \textit{energy} of $p$ by
$$H_n^{\omega}(p)\,:= \,\sum_{a\in p} \omega_a\,,$$
where $a\in p$ means  that the vertex $a$ is in the range of the path $p$.\footnote{Recall that, by convention, the root nodes $A$ and $B$ are not elements of the vertex set $V_n^{b,s}$.} From this, we define the following random measure on $\Gamma^{b,s}_n$ for $n\geq1$ and  a fixed  inverse temperature value $\beta \in [0, 
\infty)$:
\begin{align*}
\mathbf{M}^{\omega}_{\beta, n}(p) \,   := \,\frac{1}{\big|\Gamma^{b,s}_n\big|} \textup{exp}\Big\{\beta H^\omega_n(p) -(s^n-1)\lambda(\beta) \Big\}\,=\,\frac{1}{\big|\Gamma^{b,s}_n\big|}\prod_{a\in p}\textup{exp}\big\{\beta \omega_a-\lambda(\beta) \big\} \, ,
\end{align*}
recalling that $\lambda(\beta):=\log \big(\mathbb{E}\big[e^{\beta\omega_a} \big ]\big) $.
Note that the above is a uniform probability measure on the path space when $\beta = 0$. The \textit{partition function}, $W_{n}^\omega(\beta)$, is the random variable defined by the total mass of $\mathbf{M}^{\omega}_{\beta, n}$, meaning
\begin{align}\label{PF}
    W_{n}^\omega(\beta)\,:=\, \mathbf{M}^{\omega}_{\beta, n}\big(\Gamma^{b,s}_n\big)\,=\,  \frac{1}{\big|\Gamma_{n}^{b,s}\big|  }\sum_{p\in \Gamma_{n}^{b,s}  }\mathbf{M}^{\omega}_{\beta, n}(p) \, .  
\end{align}
When $n=0$, we define $W_{0}^\omega(\beta):=1$ since $\Gamma_0^{b,s} = \emptyset$. The hierarchical symmetry resulting from the embedding procedure used in the construction of the diamond graphs implies the distributional recurrence relation for the partition functions given below:
\begin{align}\label{PartHierSymmII}
W_{n+1}^{\omega}(\beta)\,\stackrel{d}{=}\, \frac{1}{b}\sum_{1\leq i \leq b} \Bigg(\prod_{1\leq j \leq s}W_{n}^{(i,j)}(\beta) \Bigg)\Bigg( \prod_{1\leq \ell \leq s-1} \textup{exp}\left\{ \beta \omega_{i,\ell}  -\lambda\big(\beta \omega_{i,\ell}\big)\right\} \Bigg) ,
\end{align}
in which  $\big\{W_{n}^{(i,j)}(\beta)\big\}_{i,j}$ and $\{\omega_{i,\ell}\}$ are respectively families of independent copies of the random variables $W_{n}^{\omega}(\beta)$ and $\omega_a$, and the two collections are independent. In the above, each $W_{n}^{(i,j)}(\beta) $ corresponds to a subgraph of  $D_{n+1}^{b,s}$ isomorphic to $D_n^{b,s}$, and each 
$\textup{exp}\big\{ \beta \omega_{i,\ell}  -\lambda(\beta \omega_{i,\ell})\big\}$ corresponds to a generation-1 vertex of $D_{n+1}^{b,s}$. Notice that the recurrence relation for the edge-disorder model is simpler than~(\ref{PartHierSymmII}) in that the second product on the right side of~(\ref{PartHierSymmII}) is not present.

For $k\in \mathbb{N}_{0}$ and $\beta>0$, let  ${\varrho}_{k}(\beta)$ denote the variance of the partition function ${W}^{\omega}_{k}(\beta)$.  As a consequence of the distributional identity~(\ref{PartHierSymmII}), the sequence  of variances $\big(\varrho_{k}(\beta)\big)_{k\in \mathbb{N}_0}$ satisfies the recursive equation
\begin{align}\label{RecEqVar}
   \varrho_{k+1}(\beta)\,=\, M_{V}^{b,s}\big(\varrho_{k}(\beta)\big)  \hspace{1cm}\text{with}\hspace{1cm}{\varrho}_{0}(\beta)\,=\,0\,,
\end{align}
where the map ${M}_{V}^{b,s}:[0,\infty)\rightarrow [0,\infty)$ is defined by
\begin{align*}
   M_{V}^{b,s}(x) \,:=\,\frac{1}{b}\Big[ (1+x)^s\big(1+ V \big)^{s-1}    \,-\,1   \Big] \hspace{.5cm}\text{for}\hspace{.5cm} V\,\equiv\, V(\beta)\,:=\,\textup{Var}\left( \textup{exp}\big\{\beta \omega -\lambda(\beta) \big\} \right)     \,. 
\end{align*}
Note that when $V=0$, the map $M_{V}^{b,s}(x)$ reduces to $M^{b,s}(x):=\frac{1}{b}\big[(1+x)^s-1\big]$, which has  the  $0<x\ll 1$ asymptotics
\begin{align}\label{Repelling}
M^{b,s}(x)\, =\,\begin{cases}  \frac{s}{b}x+\mathit{O}\big(x^2\big) &  b\neq s \,,  \\   x+\frac{b-1}{2}x^2 +\mathit{O}\big(x^3\big)  & b=s \,.
\end{cases}
\end{align}
Thus, the fixed point $x=0$ for the variance map $M^{b,s}(x)$ is repelling if and only if $b\leq s$, but it is merely marginally repelling when $b=s$.

\subsection{Previous result on the $\mathbf{b<s}$ case}\label{SecSubCri}

Next we turn our discussion to scaling limits where the  generation parameter $n\in \mathbb{N}$ of the diamond graphs grows while the inverse temperature  $\beta\equiv\beta_n$ vanishes at a rate such that the partition function $W_{n}^{\omega}(\beta_n)$ converges in law to a nontrivial  limit. This is only possible  in the cases $b < s$ and $b=s$, where the diamond graph DPRE model is disorder relevant. The following limit theorem is from~\cite[Theorem 2.1]{US}, and the limit law $\mathbf{W}_{r}^{b,s}$ appearing in its statement was shown in~\cite{Clark3} to be the partition function for a  continuum DPRE model analogous to that introduced  in~\cite{alberts2} for the continuum limit of the $d=1$ rectangular lattice model. That is, there is a canonical family of  random measure laws $\{M_{r}^{b,s}\}_{r\in (0,\infty)}$ having total mass equal in distribution to $\mathbf{W}_r^{b,s}$ and acting on a space $\Gamma^{b,s}$ of continuum pathways across a diamond fractal that arises as a ``limit" of the diamond graphs $D_n$ as $n\uparrow \infty$.

\begin{theorem}\label{ThmOLDb<s}
Let $\widehat{\beta} > 0$, and define $\beta_n := \widehat{\beta}  (\frac{b}{s})^{n/2}$. As $n \uparrow \infty$, we have convergence in distribution 
$$W_{n}^{\omega}(\beta_n)\quad  \Longrightarrow \quad  \mathbf{W}_{\widehat{\beta} ^2 \frac{s-1}{s-b}}^{b,s} \,, $$
where the  family of distributions $\big\{\mathbf{W}_{r}^{ b,s}\big\}_{r\in [0,\infty)}$ has properties (I)--(III) below.
\begin{enumerate}[(I)]
    \item $\mathbf{W}_{r}^{b,s}$ has mean 1 and variance $R_{b,s}(r)$ for a function $R_{b,s}: [0,\infty) \rightarrow [0,\infty)$ satisfying 
    \begin{align*}
        R_{b,s}\left(\frac{s}{b}r\right) \, = \, \frac{1}{b}\Big[\big(1+ R_{b,s}(r)\big)^s -1\Big] \hspace{1cm} and \hspace{1cm} \lim_{r \downarrow 0} \frac{R_{b,s}(r)}{r}=1\,.
    \end{align*}

 \item If $X_r$ is a random variable with distribution $\mathbf{W}_{r}^{b,s}$, then $\frac{X_r-1}{\sqrt{r}}$ converges in distribution to $\mathcal{N}(0,1)$  as $r \downarrow 0$. 
 
    \item If $\big\{X_r^{(i,j)}\big\}_{1\leq i\leq b,\,  1\leq j\leq s}$ is a family of independent random variables with distribution $\mathbf{W}_{r}^{b,s}$, then there is equality in distribution \vspace{-.1cm}
    $$\mathbf{W}_{\frac{s}{b}r}^{b,s}\, \stackrel{d}{=} \, \frac{1}{b}\sum_{1 \leq i \leq b} \prod_{1 \leq j \leq s} X_r^{(i,j)}\,.$$

\end{enumerate}
\end{theorem}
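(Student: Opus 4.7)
The scaling $\beta_n = \hat{\beta}(b/s)^{n/2}$ is engineered so that $V(\beta_n) \sim \hat{\beta}^2 (b/s)^n$ decays at precisely the rate compensating for the repelling eigenvalue $s/b > 1$ of the variance map $M^{b,s}$ at its fixed point $x=0$, per~(\ref{Repelling}). My first step would be to establish convergence of $\varrho_n(\beta_n)$ to $R_{b,s}(\hat{\beta}^2(s-1)/(s-b))$. Linearizing the recursion~(\ref{RecEqVar}) near $0$ gives $\varrho_{k+1}(\beta_n) \approx (s/b)\varrho_k(\beta_n) + (s-1)V(\beta_n)$, and iteration produces a geometric sum whose value matches the claimed limit. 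An a priori bound via induction on $k$ would control the suppressed nonlinear remainder, ensuring $\varrho_k(\beta_n)$ remains in a small neighborhood of $0$ uniformly over $k \leq n$.

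The distributional convergence of $W_n^\omega(\beta_n)$ is the heart of the proof. I would first construct the candidate family $\{\mathbf{W}_r^{b,s}\}$ as a fixed point of the functional equation in~(III), then compare $W_n^\omega(\beta_n)$ against this construction via the hierarchical recursion~(\ref{PartHierSymmII}). Let $T$ denote the map sending a distribution $\mu$ on $[0,\infty)$ with mean one to the law of $\frac{1}{b}\sum_{i=1}^b \prod_{j=1}^s X^{(i,j)}$ for i.i.d.\ $X^{(i,j)} \sim \mu$. Applying $T$ exactly $m$ times starting from a distribution with small variance $r_0 = (b/s)^m r$ yields a distribution whose variance matches $R_{b,s}(r)$. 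A Cauchy-in-$L^2$-Wasserstein argument, leveraging the preservation of mean one and the explicit variance recursion, would define $\mathbf{W}_r^{b,s}$ as the $m \to \infty$ limit. On the polymer side, (\ref{PartHierSymmII}) realizes $W_{n+1}^\omega(\beta_n)$ as $T$ applied to i.i.d.\ copies of $W_n^\omega(\beta_n)$ multiplied by independent noise factors $\prod_\ell \exp\{\beta_n \omega_{i,\ell} - \lambda(\beta_n)\}$ of mean one and variance $V(\beta_n)\to 0$. Telescoping over $n$ generations, I would bound the $L^2$ error from replacing each noise factor by $1$ and sum the contributions, combining this with a low-$\beta$ Gaussian approximation of the early partition functions $W_{k_0}^\omega(\beta_n)$ for a well-chosen $k_0$.

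The main obstacle I anticipate is managing the interaction between the two sources of error---the disorder-factor approximation at each generation and its accumulation through the multiplicative recursion---since products of random variables tend to amplify $L^2$ discrepancies. Careful $L^p$ control (for some $p$ slightly above $2$) on the partition-function iterates, underwritten by the variance bound from the first step, should keep this accumulation uniformly bounded in $n$ and vanishing in the limit. Properties~(I)--(III) of the limit family then follow in order: (I) from the variance recursion derived in the first step, (III) directly from the construction of $\mathbf{W}_r^{b,s}$ as a fixed point of $T$, and (II) from a central limit theorem applied to the $b$-fold sum in~(III) in the small-$r$ regime, where the factors $X_r^{(i,j)}$ are nearly deterministic at value $1$ and the product collapses to a sum of nearly i.i.d.\ variables of vanishing variance.
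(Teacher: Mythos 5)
A preliminary remark: the paper does not prove this theorem --- it is quoted from~\cite{US} for comparison purposes --- so there is no in-paper proof to measure your proposal against; the closest analogues are the strategy of~\cite{US} itself and the critical-case machinery of Section~\ref{SecMainThmOutline}. Your overall architecture (a variance-recursion analysis, followed by a hierarchical distributional limit theorem underwritten by moment control) is the right one. However, your first step contains a genuine conceptual gap. The linearization of~(\ref{RecEqVar}) is $M_{V}^{b,s}(x)=\frac{s}{b}x+\frac{s-1}{b}V+\ldots$, so your inhomogeneous term $(s-1)V(\beta_n)$ is missing a factor of $\frac{1}{b}$; with your constant the geometric sum converges to $b\,\widehat{\beta}^2\frac{s-1}{s-b}$ rather than $\widehat{\beta}^2\frac{s-1}{s-b}$. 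Much more importantly, the nonlinear remainder of the recursion is not a ``suppressed'' error to be controlled away by an a priori bound: it is precisely what makes the limiting variance equal $R_{b,s}(r)$ rather than $r$, and these agree only to first order as $r\downarrow 0$. Correspondingly, $\varrho_k(\beta_n)$ does \emph{not} remain in a small neighborhood of $0$ uniformly over $k\leq n$; it climbs to $R_{b,s}(r)$, a fixed positive number that is large when $\widehat{\beta}$ is large. The argument that works splits the $n$ iterations into an initial phase of $n-m$ steps, where the linearization is accurate and delivers variance $\approx(\frac{b}{s})^{m}r$, followed by $m$ applications of the full nonlinear map $M^{b,s}$, with $m\to\infty$ taken after $n\to\infty$; this two-scale structure is exactly what the functional equation in~(I) encodes, and it is the subcritical analogue of the $N=\lfloor\log n\rfloor$ decomposition the paper uses in the critical case.

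A second gap concerns property~(II): a central limit theorem ``applied to the $b$-fold sum in~(III)'' cannot produce Gaussianity, since a sum of a fixed number $b$ of i.i.d.\ terms is not asymptotically normal no matter how nearly deterministic the summands are. One must iterate~(III) $m$ times to represent $X_r-1$ as a normalized sum of $(bs)^m$ nearly independent small contributions and verify a Lindeberg-type condition, which in this hierarchical setting is done through fourth-moment estimates of the kind required in Definition~\ref{DefRegular} and supplied by Lemma~\ref{LemmaHM} --- control in $L^{2+\epsilon}$ for $\epsilon$ small is not the natural currency here. With these two repairs (the two-phase variance analysis and the iterated hierarchical CLT with fourth-moment control), your outline becomes essentially the proof given in~\cite{US}.
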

 Statement (III) derives as a limit of the distributional recurrence relation~(\ref{PartHierSymmII}), and the variance relation in (I) follows from it,  assuming that the second moments are finite.

The geometric form with  common ratio $\big(\frac{b}{s}\big)^{1/2}$   for the inverse temperature  $\beta_n$ is a reasonable choice considering the linear repelling~(\ref{Repelling})  of the variance map $M^{b,s}$ near $x=0$ when $b<s$.  In the case $b=s$, the inverse temperature scaling  used above reduces merely to $\widehat{\beta}$, which fails to vanish as $n\uparrow \infty$.  Thus, a different choice of inverse temperature scaling $\beta_n$ is needed to obtain a distributional convergence result analogous to Theorem~\ref{ThmOLDb<s} when $b=s$. 

\subsection{Inverse temperature scaling in the $\mathbf{b=s}$ case}\label{SecSca}
 The proposition below from~\cite[Theorem 2.5]{US} examines the large-$n$ behavior of the partition function $ W_{n}^{\omega}(\beta_n )$  in the critical case $b=s$  when the inverse temperature is taken to be of the form $\beta_n=\frac{\widehat{\beta} }{n}$. There is a critical point in the behavior at $\widehat{\kappa}_b := \frac{ \pi\sqrt{b}   }{\sqrt{2}(b-1)   } $ for the parameter $\widehat{\beta} \in [0,\infty)$, which indicates that a more refined inverse temperature scaling is required to achieve an analog of Theorem~\ref{ThmOLDb<s}.  
\begin{proposition}\label{RemarkBetaScaleII} For $b\in \{2,3,\ldots\}$, define the map $\upsilon_b:[0,\widehat{\kappa}_b)\rightarrow [0,\infty)$ by $\upsilon_b(\widehat{\beta}):=\widehat{\beta}\frac{ \sqrt{2} }{\sqrt{b}  }\tan\big(\frac{\pi}{2}\frac{\widehat{\beta}}{ \widehat{\kappa}_b}   \big)$.  When $b=s$, the partition function $ W_{n}^{\omega}\big(\frac{\widehat{\beta}}{ n}\big)$ has the large-$n$  distributional behaviors listed below, depending on the parameter $\widehat{\beta}\geq 0$. 
\begin{itemize}
\item When $\widehat{\beta} <\widehat{\kappa}_b $, the variance of $W_{n}^{\omega}\big(\frac{\widehat{\beta}}{ n}\big)$ vanishes as  $\frac{1}{ n^2}\big(\upsilon_b(\widehat{\beta})+\mathit{o}(1)\big)$ with large $n$, and there is convergence in distribution
 $$n\bigg(W_{n}^{\omega}\bigg(\frac{\widehat{\beta}}{ n}\bigg)-1\bigg)\hspace{.5cm} \Longrightarrow \hspace{.5cm}  \mathcal{N}\Big(0, \upsilon_b\big(\widehat{\beta}\big)\Big)\,. $$
\item When $\widehat{\beta} =\widehat{\kappa}_b $,  the variance of $W_{n}^{\omega}\big(\frac{\widehat{\beta}}{ n}\big)$ vanishes as  $\frac{1}{ \log n}\big(\frac{6}{b+1}+\mathit{o}(1)\big)$  with large  $n$, and there is convergence in distribution
 $$\sqrt{\log n}\bigg(W_{n}^{\omega}\bigg(\frac{\widehat{\beta}}{ n}\bigg)-1\bigg) \hspace{.5cm} \Longrightarrow \hspace{.5cm} \mathcal{N}\Big(0, \frac{6}{b+1} \Big)\,.$$
\item When $\widehat{\beta} >\widehat{\kappa}_b$, the variance of  $W_{n}^{\omega}\big(\frac{\widehat{\beta}}{ n}\big)$ diverges to $\infty$ as $n$ increases. 
 \end{itemize}
\end{proposition}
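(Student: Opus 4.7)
The plan is to analyze the deterministic variance recursion~(\ref{RecEqVar}) to pin down the right variance scale, then leverage the hierarchical distributional identity~(\ref{PartHierSymmII}) to promote this to asymptotic Gaussianity. Write $V_n := V(\widehat\beta/n)$ and use the standard small-$\beta$ cumulant expansion to get $V_n = \widehat\beta^2/n^2 + \mathit{O}(1/n^3)$. Expanding $M_{V_n}^{b,b}(x)$ about $x=V_n=0$ gives
\begin{align*}
M_{V_n}^{b,b}(x) \,=\, x \,+\, \frac{b-1}{2}x^2 \,+\, \frac{b-1}{b}V_n \,+\, \mathit{O}\big(xV_n + x^3 + V_n^2\big),
\end{align*}
which suggests approximating the discrete recursion by the Riccati-type ODE $\dot\varrho(t) = \frac{b-1}{2}\varrho^2 + \frac{b-1}{b}V_n$ with $\varrho(0)=0$. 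The tangent substitution $\varrho(t) = y\tan\big(\frac{b-1}{2}yt\big)$ with $y := \sqrt{2V_n/b}$ solves this ODE exactly. Evaluating at time $t=n$, the tangent's argument simplifies to precisely $\frac{\pi}{2}\widehat\beta/\widehat\kappa_b$, and the three regimes in the statement correspond to this argument lying below, at, or above $\pi/2$.  In particular, $\widehat\kappa_b$ is distinguished as the exact critical value at which the ODE solution blows up over the available time horizon.

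In the subcritical case $\widehat\beta<\widehat\kappa_b$, the tangent is finite and the ODE captures the correct variance asymptotic through the explicit formula for $\upsilon_b(\widehat\beta)$; the rigorous step is to verify that the discrete recursion tracks its continuum counterpart to leading order, which amounts to controlling the accumulated error from the $xV_n$, $x^3$, and $V_n^2$ remainders through a Gronwall-type comparison across $n$ steps. The critical case $\widehat\beta=\widehat\kappa_b$ is the truly delicate one, since the ODE blows up at $t=n$ and the continuum approximation breaks down; here one must work directly with the discrete recursion, tracking the reciprocal $r_k:=1/\varrho_k$, whose increments admit a nearly arithmetic structure from which the slow $1/\log n$ scale emerges, with the specific constant $6/(b+1)$ arising from a careful accounting of the next-order terms that were discarded in the subcritical analysis. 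In the supercritical case $\widehat\beta>\widehat\kappa_b$, one shows that once the iterate leaves a small enough neighborhood of zero, the $\frac{b-1}{2}x^2$ feedback dominates and elementary monotonicity bounds force $\varrho_n\to\infty$.

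Given the correct variance scale, the Gaussian convergence follows by propagating characteristic functions through the hierarchical recursion~(\ref{PartHierSymmII}). Writing $\phi_k(t):=\mathbb{E}\big[\exp\big(it\mu_n(W_k^\omega(\widehat\beta/n)-1)\big)\big]$ for the appropriate rescaling factor $\mu_n\in\{n,\sqrt{\log n}\}$, the recursion~(\ref{PartHierSymmII}) yields an iterative functional equation for $\phi_k$; expanding to quadratic order in $\mu_n^{-1}$ at each step and summing the quadratic contributions across $n$ iterations drives $\phi_n(t)$ toward the characteristic function $\exp(-t^2\upsilon/2)$ of the claimed Gaussian limit, with third- and higher-cumulant contributions being negligible provided $V_n$ has sufficient exponential moments. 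The main obstacle throughout is again the critical case: one must retain enough higher-order structure to simultaneously recover the logarithmic variance scale and verify that the characteristic function recursion remains asymptotically Gaussian rather than generating non-trivial higher cumulants in the limit, a matched asymptotic step with no clean continuum analog.
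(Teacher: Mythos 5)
The paper does not actually prove Proposition~\ref{RemarkBetaScaleII}; it is imported from \cite[Theorem 2.5]{US}, and the only in-paper analogue of its variance analysis is the proof of Lemma~\ref{LemMtilde}, which refines the same technique. Your central idea---approximating the recursion~(\ref{RecEqVar}) by the Riccati equation $\dot\varrho=\tfrac{b-1}{2}\varrho^2+\tfrac{b-1}{b}V_n$, solved by $\varrho(t)=y\tan\big(\tfrac{b-1}{2}yt\big)$ with $y=\sqrt{2V_n/b}$---is exactly the continuum shadow of the paper's discrete substitution $\mathbf{r}_k=\tfrac{2}{\pi}\tan^{-1}(\cdots)$, and your identification of $\widehat{\kappa}_b$ as the value at which the tangent's argument reaches $\tfrac{\pi}{2}$ at time $t=n$ checks out. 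So the approach is the right one in outline.

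There are, however, three concrete gaps. (i) Your own formula gives $\varrho(n)=y\tan\big(\tfrac{\pi}{2}\widehat{\beta}/\widehat{\kappa}_b\big)=\upsilon_b(\widehat{\beta})/n$, a variance of order $n^{-1}$, whereas the statement asserts $\tfrac{1}{n^2}\big(\upsilon_b(\widehat{\beta})+o(1)\big)$ with centering factor $n$ rather than $\sqrt{n}$; you never reconcile the two. Since each of the $n$ iterations of~(\ref{RecEqVar}) adds at least $\tfrac{b-1}{b}V_n\asymp n^{-2}$, one has $\varrho_n\gtrsim n^{-1}$, so the $n^{-2}$ normalization cannot be literally correct; the reading consistent with your computation is $\textup{Var}\sim\upsilon_b(\widehat{\beta})/n$ and $\sqrt{n}\big(W_n^{\omega}(\widehat{\beta}/n)-1\big)\Rightarrow\mathcal{N}\big(0,\upsilon_b(\widehat{\beta})\big)$. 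Silently deriving a different power of $n$ from the one being proved is a gap. (ii) The critical case carries essentially all of the content, and your sketch does not engage with it. Tracking $1/\varrho_k$ yields nearly arithmetic increments only in the late regime $\varrho_k\gg\sqrt{V_n}$; earlier, the forcing term dominates and the correct linearizing variable is the arctangent one. More importantly, the constant $\tfrac{6}{b+1}=\kappa_b^2/\eta_b$ is generated by the cubic term $\tfrac{(b-1)(b-2)}{6}x^3$ and the cross term $(b-1)V x$ of the variance map, whose accumulated effect must be extracted by a telescoping argument of the type in parts (C)--(F) of the proof of Lemma~\ref{LemMtilde}; at exact criticality one arrives at the self-consistent relation $1-\mathbf{r}_n\approx\tfrac{\eta}{n}\log\tfrac{1}{1-\mathbf{r}_n}$, whence $1-\mathbf{r}_n\sim\tfrac{\eta\log n}{n}$ and $\varrho_n\sim\tfrac{\kappa^2}{\eta\log n}=\tfrac{6}{(b+1)\log n}$. ``Careful accounting of next-order terms'' names the problem but is not an argument. (iii) The characteristic-function recursion you propose does not exist as stated: the identity~(\ref{PartHierSymmII}) involves a product over $j$ of independent copies, and the characteristic function of a product of independent variables is not a functional of the individual characteristic functions. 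One must first linearize the product, $\prod_j(1+X_j)\approx 1+\sum_j X_j$ with controlled error, and then invoke a normal-approximation theorem for hierarchical sequences (the route of \cite{US}, via the framework of \cite{Wehr,Goldstein}), or use a moment method. As written, the CLT step would not close.
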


Let $\widehat{\kappa}_b$ be as above, and define  $\eta_{b}:=\frac{b+1}{3(b-1) }$, $\tau:=\mathbb{E}\big[\omega_a^3\big]$, and $\varsigma_b := (\log\frac{\pi}{2}+2)\eta_b  $.  The distributional limit theorem in the next subsection uses an inverse temperature  scaling $\big(\beta_{n,r}^{(b)}\big)_{n\in \mathbb{N}}$ with the large-$n$ asymptotic form
\begin{align}\label{BetaForm2}
\beta_{n,r}^{(b)}\,=\,\frac{\widehat{\kappa}_b}{n} \bigg(1\,+\,\frac{  \eta_b \log n  }{ n }\,+\,\frac{r-\varsigma_b-\widehat{\kappa}_b\frac{\tau}{2} }{ n }\bigg)\,+\,\mathit{o}\Big( \frac{1}{n^2} \Big)
\end{align}
for a fixed value of the parameter $r\in \R$.  Note that  $\beta_{n,r}^{(b)}$ has the form $\frac{\widehat{\beta}}{n}\big(1+\mathit{o}(1)\big) $ for the critical value $\widehat{\beta}= \widehat{\kappa}_b $.

\subsection{Main result}\label{SecSubMain}
The following theorem is the counterpart to~\cite[Theorem 2.7]{Clark2}, where the disorder variables $\{\omega_a\}$ are attached to the edges of the graphs rather than the vertices, and the proof resides in Section~\ref{SecSiteDisorder}. The articles~\cite{Clark4,Clark5} study critical continuum DPRE models that correspond to the limiting partition function laws $\{\mathbf{W}_{r}^{(b)}\}_{r\in\R}$ and are analogous to the subcritical continuum polymer models mentioned before Theorem~\ref{ThmOLDb<s}.

\begin{theorem}\label{ThmMain}
Fix $b\in \{2,3,\ldots\}$ and $r\in \R$, and assume $s=b$.  If the sequence $\big(\beta_{n,r}^{(b)}\big)_{n\in \mathbb{N}}$ has the asymptotic form in~(\ref{BetaForm2}), then there is convergence in distribution as $n\uparrow \infty$ 
$$W_{n}^{\omega}\big(\beta_{n,r}^{(b)}\big) \quad \Longrightarrow \quad \mathbf{W}_{r}^{(b)} \, , $$
where the family of  distributions $\big\{\mathbf{W}_{r}^{(b)}\big\}_{r\in \R}$ uniquely satisfies (I)--(IV) below.
\begin{enumerate}[(I)]
    \item $\mathbf{W}_{r}^{(b)}$ has mean one and variance $R_{b}(r)$ for a function $R_{b}: \R \rightarrow (0,\infty)$ satisfying 
    \begin{align*}
        R_{b}(r+1)\, =\, \frac{1}{b}\left[\big(1+ R_{b}(r)\big)^b -1\right] \hspace{.5cm} and \hspace{.5cm} R_{b}(r)\,=\,\frac{\kappa_b^2}{-r}\bigg(1+\frac{ \eta_b \log(-r) }{-r   }\bigg)   +\mathit{O}\bigg( \frac{\log^2(-r) }{r^3   }  \bigg)\,,
    \end{align*}
    where $\kappa_b := (\frac{2}{b-1})^{1/2} $ and the asymptotic above occurs in the limit $r\downarrow -\infty$.

\item  For each $m\in \{3,4,\ldots\}$, the $m^{th}$ centered moment  of $\mathbf{W}_{r}^{(b)}$ is finite for all $r\in \R$.  Moreover, the $m^{th}$ centered moment vanishes in proportion to $(-r)^{-\lceil  m/2\rceil  } $ as $r\downarrow -\infty$, and diverges to $\infty$ as $r\uparrow \infty$.

 \item If $X_r$ is a random variable with distribution $\mathbf{W}_{r}^{(b)}$, then $\sqrt{-r} (X_r-1) $ converges in distribution to $\mathcal{N}\big(0,\kappa_b^2 \big)$ as $r \downarrow -\infty$.

 \item If $\big\{X_r^{(i,j)}\big\}_{1\leq i,j\leq b}$ is a family of independent random variables with distribution $\mathbf{W}_{r}^{(b)}$, then there is equality in distribution
    $$\mathbf{W}_{r+1}^{(b)} \,\stackrel{d}{=} \,\frac{1}{b}\sum_{1 \leq i \leq b} \prod_{1 \leq j \leq b} X_r^{(i,j)}\,. $$ 
    
\end{enumerate}
\end{theorem}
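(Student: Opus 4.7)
The plan is to transfer the critical weak-disorder convergence already established in~\cite{Clark2} for the edge-disorder model to the vertex-disorder setting by means of a coupling that removes a carefully chosen subset of the vertex disorder variables. More specifically, using the distributional recurrence~(\ref{PartHierSymmII}), I would construct an auxiliary random variable $\widetilde{W}_n^\omega(\beta)$ by zeroing out the extra generation-$1$ vertex contributions (the second product on the right-hand side of~(\ref{PartHierSymmII})) at every scale of the recursion. The resulting recurrence for $\widetilde{W}_n^\omega(\beta)$ omits the factor $\prod_{\ell} \exp\{\beta\omega_{i,\ell} - \lambda(\beta \omega_{i,\ell})\}$ and thus matches the hierarchical recurrence for an edge-disorder partition function with a modified inverse temperature schedule; in this sense $\widetilde{W}_n^\omega$ is the ``smaller related edge-disorder model'' referenced in the introduction.

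The second step is to show that the $L^2$-distance between $W_n^\omega(\beta_{n,r}^{(b)})$ and $\widetilde{W}_n^\omega(\widetilde{\beta}_n)$ tends to zero as $n \uparrow \infty$, where $\widetilde{\beta}_n$ is the inverse temperature assigned to the reduced edge-disorder model. Using~(\ref{RecEqVar}) together with the decomposition $M_V^{b,b}(x) = M^{b,b}(x) + \frac{1}{b}\big[(1+V)^{b-1} - 1\big](1+x)^b$, one can compare the two variance sequences (with $V > 0$ and $V = 0$, respectively) and show that the difference is controlled by the accumulated contribution of the removed vertex disorders, which is $\mathit{o}(1)$ under the critical scaling $\beta_{n,r}^{(b)} \sim \widehat{\kappa}_b / n$. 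Once this $L^2$-approximation is in place, the convergence $\widetilde{W}_n^\omega(\widetilde{\beta}_n) \Rightarrow \mathbf{W}_r^{(b)}$ follows directly from the edge-disorder theorem in~\cite{Clark2}, provided $\widetilde{\beta}_n$ has precisely the asymptotic form required there.

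The main obstacle, which the authors flag as Lemma~\ref{LemMtilde}, is an inversion problem: starting from the three-term expansion~(\ref{BetaForm2}) for $\beta_{n,r}^{(b)}$, one must exhibit a corresponding $\widetilde{\beta}_n$ whose own expansion falls within the scope of the edge-disorder result. Because the vertex recurrence~(\ref{PartHierSymmII}) is inhomogeneous in a way that the edge recurrence is not, the relation between the two inverse temperatures involves the Taylor expansion of $V(\beta) = \textup{Var}(e^{\beta\omega - \lambda(\beta)})$ through high enough order to track corrections at scales $\frac{1}{n^2}$, $\frac{\log n}{n^3}$, and $\frac{1}{n^3}$. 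The constants $\eta_b$, $\varsigma_b$, and the third-moment cumulant $\tau = \mathbb{E}[\omega_a^3]$ appearing in~(\ref{BetaForm2}) are produced exactly by this matching, since the skewness of $\omega$ enters the first subleading correction to $V$. I expect this step to be the delicate heart of the argument: any slack in the accounting would either spoil the variance convergence of $\widetilde{W}_n^\omega(\widetilde{\beta}_n)$ or introduce a nontrivial shift in the limit parameter $r$. Once the distributional convergence $W_n^\omega(\beta_{n,r}^{(b)}) \Rightarrow \mathbf{W}_r^{(b)}$ is established, properties (I)--(IV) and the uniqueness of the family $\{\mathbf{W}_r^{(b)}\}_{r \in \R}$ can be inherited from the edge-disorder counterpart: (IV) by passing to the limit in~(\ref{PartHierSymmII}) and noting that the generation-$1$ vertex factor converges to $1$ in probability, (I) by taking the limit of~(\ref{RecEqVar}) with $V \to 0$, and (II)--(III) by transferring moment bounds along the coupling.
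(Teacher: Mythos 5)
Your high-level outline matches the paper's strategy in spirit (remove part of the vertex disorder, control the error in $L^2$, identify the inverse-temperature/variance matching as the delicate step, then invoke the convergence machinery of~\cite{Clark2}), but the specific reduction you propose does not work. If you delete the generation-$1$ vertex factors ``at every scale of the recursion''~(\ref{PartHierSymmII}), you delete \emph{all} of the randomness --- in the vertex-disorder model every disorder variable sits at some vertex, so the resulting recursion $\widetilde{W}_{n+1}\stackrel{d}{=}\frac{1}{b}\sum_i\prod_j\widetilde{W}_n^{(i,j)}$ with $\widetilde{W}_0=1$ gives $\widetilde{W}_n\equiv 1$. Relatedly, there is no choice of $\widetilde{\beta}_n$ for which the vertex model becomes an honest edge-disorder partition function, and an $L^2$ comparison between the vertex model and a separately constructed edge model cannot be made small (two independent mean-one variables with the same law are not $L^2$-close). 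What the paper actually does is compare $W_n^\omega(\beta_{n,r})$ with its \emph{own} conditional expectation given the fine-scale disorder, $\mathbb{E}[W_n^\omega(\beta_{n,r})\mid\mathcal{F}_n^{\lfloor\log n\rfloor}]$; this is an orthogonal projection, so the squared $L^2$ error is a difference of variances computable through the maps $M_{n,r}$ and $M$ (Lemma~\ref{LemCond}). The resulting object equals $1+\mathcal{Q}^{\lfloor\log n\rfloor}\{W_n^h(\beta_{n,r})-1\}_{h\in E_{\lfloor\log n\rfloor}}$ (Proposition~\ref{PropReduce}): only the outer $\lfloor\log n\rfloor$ generations acquire the homogeneous ``edge-disorder'' form of the $\mathcal{Q}$ map, while the seeds $W_n^h$ are i.i.d.\ copies of the full vertex-disorder partition function $W^{\omega}_{n-\lfloor\log n\rfloor}(\beta_{n,r})$.

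Because the seeds are not single disorder weights but local vertex-disorder partition functions, the edge-disorder limit theorem cannot be applied ``directly''; one needs the more abstract convergence theorem for the $\mathcal{Q}$ map acting on minimally regular i.i.d.\ arrays (Theorem~\ref{ThmUnique}, i.e.\ \cite[Theorem 5.22]{Clark2}), and one must verify its hypotheses for these seeds. That requires two separate inputs that your proposal does not supply: (i) the precise variance asymptotics of $W^{\omega}_{n-\lfloor\log n\rfloor}(\beta_{n,r})$ in the form of condition (II) of Definition~\ref{DefRegular} --- this is Lemma~\ref{LemMtilde}, proved by a delicate analysis of the iteration $\varrho_{k+1}=M_{V_{n,r}}(\varrho_k)$ via an arctangent change of variables and a telescoping-sum argument, which you correctly flag as the heart of the matter but for which you give no argument; and (ii) the vanishing of the fourth (and higher) centered moments of these seeds (Lemma~\ref{LemmaHM}), which you omit entirely even though it is a precondition for invoking the convergence theorem. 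Properties (I)--(IV) of the limit family are indeed inherited from the edge-disorder framework as you say, but that part is not where the work lies.
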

Statement (I) implies that $\mathbf{W}_{r}^{(b)}$ converges weakly to one as $r\downarrow -\infty$, and $\mathbf{W}_{r}^{(b)}$ converges weakly to zero as $r\uparrow \infty$ by~\cite[Proposition 5.1]{Clark5}.  Thus, the family of limit  laws $\big\{\mathbf{W}_{r}^{(b)}\big\}_{r\in \R}$ undergoes a  transition from weak disorder to strong disorder as the parameter $r\in \R$ moves upwards from $-\infty$ to $\infty$. \vspace{.1cm}

The above theorem can be used to formally derive a high-temperature ($0<\beta\ll 1$) asymptotic for the free energy
$$ F(\beta)\,:=\,\lim_{n \to\infty}\frac{1}{b^n}\mathbb{E}\big[ \log W_n^\omega(\beta) \big] \,. $$
By inverting the inverse temperature scaling~(\ref{BetaForm2}) to write $n$ in terms of $\beta\equiv \beta_{n,r}^{(\beta)}$ for a fixed large value of $r>0$, we obtain that for small $\beta$ 
$$   F(\beta)\,=\,\mathbf{c} \beta^{\epsilon_b}\,b^{-\frac{\widehat{\kappa}_b}{\beta}}  \big(1+\mathit{o}(1)\big) \,, $$
where $\epsilon_b:=\eta_b\log b  $, and the constant $\mathbf{c}<0$ is defined by $\mathbf{c}:= b^{-\eta_b\log \widehat{\kappa}_b +\varsigma_b +\widehat{\kappa}_b\frac{\tau}{b}  } \lim_{r\uparrow \infty}\frac{1}{b^r}\mathbb{E}\big[\mathbf{W}_r^{(b)}\big]$.

\section{The $\boldsymbol{\mathcal{Q}}$ map, three  lemmas, and a  proof of the main result}\label{SecMainThmOutline}

The discussion in this section ends with the proof of Theorem~\ref{ThmMain}, which requires a distributional convergence theorem from \cite{Clark2} and the technical results in Lemmas~\ref{LemCond},~\ref{LemMtilde}, and~\ref{LemmaHM} that will be proved in Section~\ref{SecThreeLemmas}. To apply the convergence theorem from~\cite{Clark2}, we show that vertices of generation lower than $\log n$ can be removed from the partition function $W_{n}^{\omega}(\beta)$ with a negligible error (see Lemma \ref{LemCond}), yielding what is effectively an edge-disorder partition function. In the next subsection, we introduce a map $\mathcal{Q}$ that is used to decompose edge-disorder partition functions. \vspace{.2cm}

In the sequel, we refer exclusively to the case $b=s$. The dependence of  all previously defined expressions on the parameter $b\in \{2,3, \ldots\}$ will be suppressed as follows:
\begin{align*}
D^{b,b}_n\,\equiv\, D_n\,,\,\,\,\,\Gamma^{b,b}_n\,\equiv\, \Gamma_n\,,\,\,\,\, \beta_{n,r}^{(b)}\,\equiv \,\beta_{n,r}\,,\,\,\,\, M^{b,b}(x)\,\equiv\,M(x)\,,\,\,\,\, R_b(r)\equiv R(r)\,,\,\,\,\, \kappa_b\,\equiv\, \kappa\,,\,\,\,\, \eta_b\equiv \eta\,.
\end{align*}

\subsection{Hierarchical symmetry and the $\boldsymbol{\mathcal{Q}}$ map}

The recursive construction of the diamond hierarchical graphs outlined in Section~\ref{SecDHG} implies a  canonical one-to-one correspondence between the set $E_n$ of edges on the $n^{th}$-generation diamond graph $D_n$ and the $2n$-fold product set $(\{1,\ldots,b\}\times\{1,\ldots,b\}\big)^n$. When $n=1$, the edges in $E_1$ are labeled by ordered pairs $(i,j)$ wherein $i\in\{1,...,b\}$ enumerates the branches of $D_1$ and $j\in\{1,...,b\}$ enumerates the segments along the $i^{th}$ branch. The general case of the correspondence then follows from induction since there is a canonical bijection between $E_n$ and $E_1\times E_{n-1}$ arising directly from the embedding procedure used to define the diamond graphs; see the diagram in Figure~\ref{Fig2}. For $N<n$, the edge set $E_N$ is canonically bijective to a family of subgraphs $\{D_n^h\}_{h \in E_N}$ of $D_n$, each of which is isomorphic to $D_{n-N}$; see Figure~\ref{Fig3}. For $h\in E_N$, we use $V_n^h$ and $E_n^h$ respectively to denote the non-root vertex set and edge set of $D_n^h$. Under the natural identification of $V_N$ with a subset of $V_n$, the collection of sets $\{V_n^h\}_{h\in E_N}$ forms a partition of $V_n\backslash V_N$.

\begin{figure}[hbt!]
\centering
\includegraphics[width=130mm, height=80mm]{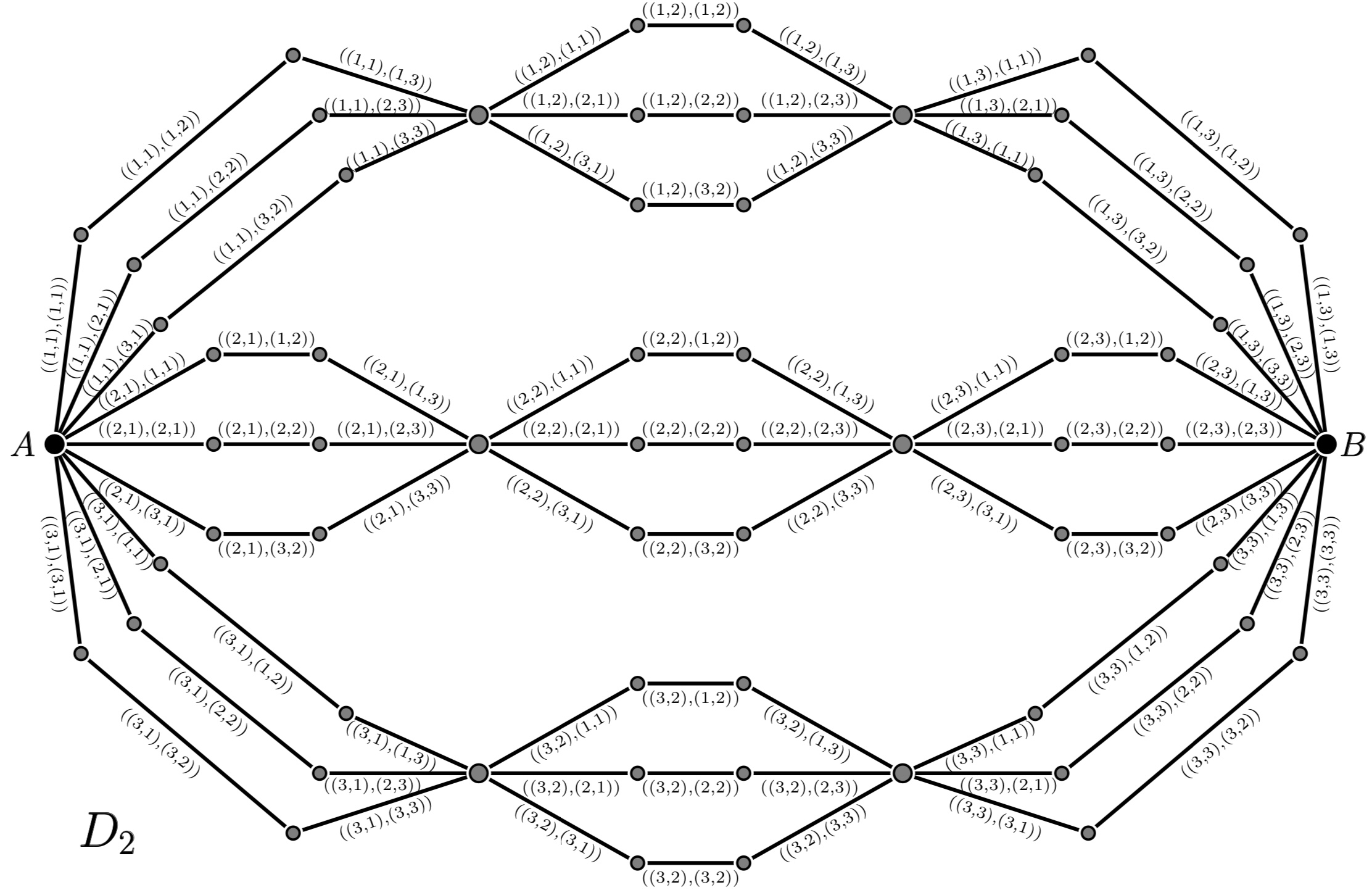}
\begin{minipage}{.81\textwidth}
\vspace{.2cm}
\caption{\label{Fig2} The above depicts the second-generation diamond graph in the case $b=s=3$. Each edge $h\in E_2$ is labeled by a nested tuple $\big((i_1,j_1),(i_2,j_2)\big)$ for some $i_1,j_1,i_2,j_2\in\{1,2,3\}$, where the ordered pair $(i_1,j_1)$ indicates the embedded subgraph isomorphic to $D_1$ in which $h$ lies and $(i_2,j_2)$ specifies the branch and segment for the position of $h$ within that subgraph.}
\end{minipage}
\end{figure}

\begin{figure}[hbt!]
\centering
\includegraphics[width=140mm, height=48mm]{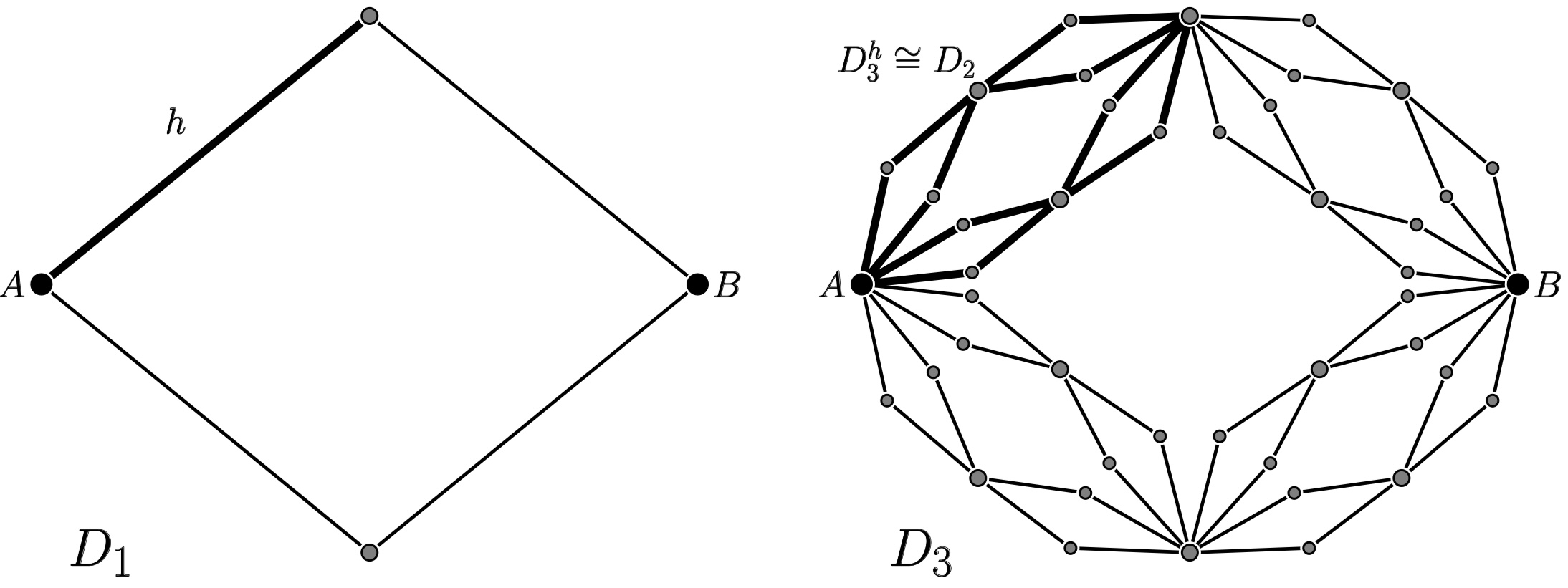} 
\begin{minipage}{.81\textwidth}
\vspace{.2cm}
\caption{\label{Fig3} For the case $b=s=2$, the edge $h\in E_1$ corresponds to an embedded copy, $D_3^h$, of the diamond graph $D_2$ within $D_3$.  More generally, for $N\leq n$, each $h\in E_N$ is associated with a subgraph $D_n^h \cong D_{n-N}$ of $D_n$.}
\end{minipage}
\end{figure}

 The following defines an operation $\mathcal{Q}$ that contracts arrays of real numbers indexed by $E_k$, which will be useful in the following subsection.

\begin{definition}\label{DefArrayMap}

For $k\in \mathbb{N}_0$, let $\{x_h\}_{h\in E_k}$ be an array of real numbers labeled by $ E_k$. Given $h\in  E_{k-1}$, let $h{\times} (i,j)$ for $i,j\in \{1,\ldots, b\}$ denote the element in $E_{k}$ corresponding to the $j^{th}$ segment along the $i^{th}$ branch of $D_k^h$, in other terms the embedded copy of  $D_1$ in $D_{k}$ identified with $h$. 
 We define $\mathcal{Q}$  as the  map that sends an array of real numbers $\{x_{h}\}_{h\in E_k}$ to the contracted array
\begin{align*}
\mathcal{Q}\{x_{h}\}_{h\in E_k} \,:=\,\{w_h\}_{h\in E_{k-1}  }\hspace{.5cm}\text{ for  }\hspace{.5cm} w_h \,:=\,\frac{1}{b}\sum_{1 \leq i \leq b}\Bigg( \prod_{1\leq j\leq b} \big(1+x_{h{\times}(i,j)}\big) \,-\,1\Bigg) \,. 
\end{align*}
 For $N\in \mathbb{N}_0$, $\mathcal{Q}^{N}$  refers to the $N$-fold composition of the $\mathcal{Q}$ map.\footnote{Note that the notation $\mathcal{Q}$ is ambiguous because it simultaneously denotes maps from $\R^{E_k}$ to $\R^{E_{k-1}}$ for each $k\in \mathbb{N}$.} 
\end{definition}

\subsection{Removing lower generation vertices from the partition function}

For $N\leq n$, let $\mathcal{F}_{n}^{N}$ be the $\sigma$-algebra generated by the family of random variables $\{\omega_a\}_{a \in V_n\backslash V_N}$. The lemma below, which we prove in Section~\ref{SecLemCond}, states that  the partition function $W_n^{\omega}(\beta_{n,r} )$ is not changed much (in the $L^2$ sense) by integrating out the disorder variables labeled by vertices of generation less than $\log n$ when $n$ is large. 
\begin{lemma}\label{LemCond} For fixed $r\in \R$, suppose that the sequence  $(\beta_{n,r})_{n\in \mathbb{N}}$ has the large-$n$ asymptotics in~(\ref{BetaForm2}).  When $N=\lfloor \log n\rfloor$, the $L^2$ distance between $W_{n}^{\omega}(\beta_{n,r})$ and $\widetilde{W}_{n}^{\omega}(\beta_{n,r}):=\mathbb{E}\big[W_{n}^{\omega}(\beta_{n,r})\,\big|\, \mathcal{F}_{n}^{N}  \big]$ vanishes as $n\uparrow \infty$. 
\end{lemma}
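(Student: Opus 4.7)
The plan is to reduce the $L^2$ comparison to a comparison of iterates of the two variance maps $M$ and $M_V$ by exploiting the hierarchical structure at level $N$. Using the canonical bijection $\Gamma_n \leftrightarrow \{(p,(p_h)_{h\in p}):p\in \Gamma_N,\, p_h\in \Gamma_{n-N}^h\}$ and the partition $V_n\setminus V_N = \bigsqcup_{h\in E_N} V_n^h$, one factors
$$W_n^\omega(\beta) \,=\, \frac{1}{|\Gamma_N|}\sum_{p\in \Gamma_N} \bigg(\prod_{a\in p}e^{\beta \omega_a-\lambda(\beta)}\bigg)\prod_{h\in p}W_{n-N}^{\omega,h}(\beta),$$
where each $W_{n-N}^{\omega,h}(\beta)$ is an independent copy of $W_{n-N}^\omega(\beta)$, measurable with respect to $\mathcal{F}_n^N$. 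Integrating out the skeleton variables $\{\omega_a\}_{a\in V_N}$ (which are independent of $\mathcal{F}_n^N$ and satisfy $\mathbb{E}[e^{\beta\omega_a-\lambda(\beta)}]=1$) then yields
$$\widetilde W_n^\omega(\beta) \,=\, \frac{1}{|\Gamma_N|}\sum_{p\in\Gamma_N}\prod_{h\in p}W_{n-N}^{\omega,h}(\beta),$$
which is precisely an edge-disorder-type partition function on $D_N$ with i.i.d.\ mean-one edge weights of variance $\varrho_{n-N}(\beta)$.

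Since both $W_n^\omega$ and $\widetilde W_n^\omega$ have mean one and the latter is $\mathcal{F}_n^N$-measurable, orthogonality of the conditional expectation gives
$$\big\|W_n^\omega(\beta_{n,r})-\widetilde W_n^\omega(\beta_{n,r})\big\|_2^2 \,=\, \varrho_n(\beta_{n,r}) - \mathrm{Var}\big(\widetilde W_n^\omega(\beta_{n,r})\big).$$
Applying the edge-disorder variance recursion $N$ times gives $\mathrm{Var}(\widetilde W_n^\omega) = M^{(N)}(\varrho_{n-N}(\beta_{n,r}))$, while $\varrho_n(\beta_{n,r}) = M_V^{(N)}(\varrho_{n-N}(\beta_{n,r}))$ with $V:=V(\beta_{n,r})$. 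Writing $y:=\varrho_{n-N}(\beta_{n,r})$, the task reduces to showing $M_V^{(N)}(y) - M^{(N)}(y) \to 0$ as $n\uparrow\infty$.

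The per-step discrepancy is $M_V(x)-M(x)=\tfrac{1}{b}(1+x)^b\bigl[(1+V)^{b-1}-1\bigr] = O\bigl(V(1+x)^b\bigr)$, and a short computation from the cumulant expansion of $\lambda$ combined with the scaling (\ref{BetaForm2}) yields $V(\beta_{n,r}) = \beta_{n,r}^2 + O(\beta_{n,r}^3) = O(n^{-2})$. Telescoping via $x_k := M_V^{(k)}(y)$,
$$M_V^{(N)}(y) - M^{(N)}(y) \,=\, \sum_{k=0}^{N-1}\Big[M^{(N-1-k)}\bigl(M_V(x_k)\bigr) - M^{(N-1-k)}\bigl(M(x_k)\bigr)\Big],$$
and the mean value theorem bounds each summand by $|M_V(x_k)-M(x_k)|$ times the supremum of $(M^{(N-1-k)})'$ over an interval in the trajectory.

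The main obstacle is controlling the derivative products. Because $x=0$ is only marginally repelling for $M$, the iterates grow along the trajectory and the factors $(1+x_k)^{b-1}$ are not uniformly close to $1$ near the end of the orbit. The resolution rests on the variance-recursion analysis already developed in the edge-disorder setting of \cite{Clark2}: the iterates $x_k$ shadow the continuum orbit $R(r+k-n)$ in the final window, and the tail asymptotic $R(r)\sim \kappa^2/(-r)$ together with the identity $(b-1)\kappa^2=2$ gives
$$\sum_{k=0}^{N-1}\log(1+x_k)^{b-1} \,\approx\, 2\sum_{k=0}^{N-1}\frac{1}{N-k-r} \,\lesssim\, 2\log N.$$
Hence the product of derivatives over the tail is $O((\log n)^2)$, each summand in the telescoping bound is $O(V (\log n)^2)$, and the full sum is $O((\log n)^3 / n^2)$, which vanishes as $n\uparrow\infty$. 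The choice $N=\lfloor\log n\rfloor$ is calibrated precisely so that this polylogarithmic blowup from iterating derivatives is overwhelmed by the $n^{-2}$ decay of $V$.
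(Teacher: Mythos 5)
Your proposal follows essentially the same route as the paper's proof. The orthogonality identity reducing the squared $L^2$ distance to $M_V^{(N)}(y)-M^{(N)}(y)$ with $y=\varrho_{n-N}(\beta_{n,r})$ is exactly how the paper sets things up (the remaining claim is isolated there as Lemma~\ref{LemmaMMaps}), and the paper likewise proves that claim by a telescoping sum interpolating between the two compositions, the mean value theorem, the per-step discrepancy $M_{n,r}(x)-M(x)<V_{n,r}(1+x)^b$ with $V_{n,r}=O(n^{-2})$, and a derivative bound of order $(\log n)^2$ per summand, arriving at the same overall $O(\log^3 n/n^2)$ estimate with the same calibration $N=\lfloor\log n\rfloor$.

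The one place where your sketch is materially thinner than the paper's argument is the control of the derivative products. You justify $\prod_{j}(1+x_j)^{b-1}=O(N^2)$ by asserting that the perturbed iterates $x_k=M_V^{(k)}(y)$ shadow the continuum orbit $R(r+k-N)$ (you wrote $R(r+k-n)$, presumably a typo for $N$). But a priori the perturbed orbit could outrun the unperturbed one: $x_N=\varrho_n(\beta_{n,r})$ is essentially the quantity whose boundedness is in question, so the shadowing claim is close to circular as stated, and near the marginally repelling fixed point small per-step excesses can in principle compound. The paper closes this with two ingredients: first, the identity $\frac{d}{dx}M^{k}(x)=(k+1)^2D_k\big(M^k(x)\big)$ together with the uniform bound on $(D_k)$ over compacts (Lemma~\ref{LemMaybe}, imported from \cite{Clark1}), which turns your heuristic $N^2$ estimate into a rigorous bound depending only on the \emph{endpoint} of the orbit rather than on the whole trajectory; and second, a bootstrap (the $\ell^*_{n,r}$ argument) showing that $M_{n,r}^{\ell}(A_{n,r})\leq 2M^{\ell}(A_{n,r})$ persists up to $\ell=\lfloor\log n\rfloor$, so that endpoint stays below $3R(r)$ for large $n$. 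With these two additions your argument is complete; without them the step ``hence the product of derivatives over the tail is $O((\log n)^2)$'' is not justified.
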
 
Next, we will discuss how to express $\E\big[W_{n}^{\omega}(\beta)\,\big|\, \mathcal{F}_{n}^{N}\big]$ in terms of the $\mathcal{Q}$ map. The conditional expectation of the partition function $W_{n}^{\omega}(\beta)$ with respect to $\mathcal{F}_{n}^{N}$ has the form 
\begin{align} \label{Remove}
\E
\left[W_{n}^{\omega}(\beta) \, \Big| \, \mathcal{F}_{n}^{N}\right] \, = \,
\frac{1}{ |\Gamma_{n}|}\sum_{ p\in \Gamma_{n  }}\prod_{\substack{a \in  p \\ a\in V_n\backslash V_{N}  }} \textup{exp}\big\{\beta\omega_a -\lambda(\beta)\big\}\, .
\end{align}
In other words, the disorder variables corresponding to vertices in $V_N$ have been removed from the expression (\ref{PF}). For $h \in E_N$, let $\Gamma_n^h$ denote the path space on the corresponding embedded copy $D_n^h$ of $D_{n-N}$ within $D_n$. That is, if $\phi^{h}: V_{n-N} \rightarrow V_n^h$ is a graph isomorphism, then each $\mathbf{q }\in \Gamma_n^h$ is a function from $\{1,..., b^{n-N}-1\}$ to $V_n^h$ of the form $\mathbf{q }= \phi^{h} \circ \mathbf{p}$ for a unique $\mathbf{p} \in \Gamma_{n-N}$. Moreover, we have the following one-to-one correspondence between $\Gamma_n$ and a union of Cartesian products:
\begin{align} \label{Bijection}
    \Gamma_n \, \cong \, \bigcup_{q\in\Gamma_N} \prod_{h \pmb{\in} q} \Gamma_n^h \, ,
\end{align}
 in which $h \pmb{\in} q$ means that the edge $h\in E_N$ lies along the path $q$.\footnote{Recall that a Cartesian product $\prod_{\alpha \in A}S_{\alpha}$ is the set of functions from the set A to $\bigcup_{\alpha \in A} S_\alpha$ such that $f(\alpha) \in S_{\alpha}$ for each $\alpha \in A$.} The above states that each path $p \in \Gamma_n$ is determined by a generation-N coarse-graining $q \in \Gamma_N$ and a choice of a sub-path $\mathbf{q}^h\in \Gamma_n^h$ for each edge $h$ along $q$; see Figure~\ref{Fig4}. Given $p \in \Gamma_n$, let $q \in \Gamma_N$ be its coarse-graining and $\mathfrak{p}$ be the element in $\prod_{h \pmb{\in} q} \Gamma_n^h$ canonically corresponding to $p$. Notice that we can write the set of vertices on $p$ of generation higher than $N$ as the following disjoint union: 
 \begin{align} \label{PathPart}
     \big\{a\in p\, \big| \, a \in V_n \backslash V_N\big\}\, =\, \bigcup_{h \pmb{\in} q} \big\{a \in \mathfrak{p}(h)\big\}\,.
 \end{align}
\begin{figure}[hbt!]
\centering
\includegraphics[width=150mm, height=48mm]{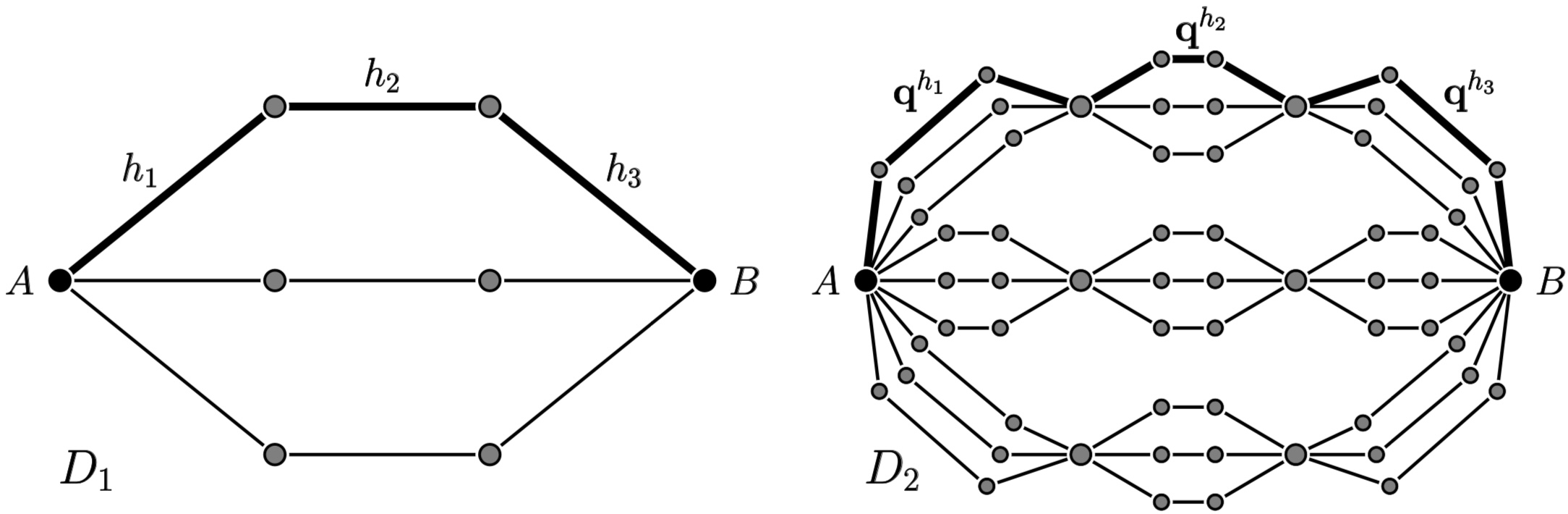}
\begin{minipage}{.81\textwidth}
\vspace{.2cm}
\caption{\label{Fig4} In the case $b=s=3$, the edges $h_1, h_2, h_3\in E_1$ comprise a  path $q\in \Gamma_1$ that is the generation-1 coarse-graining of a path $p\in\Gamma_2$ formed by the concatenated subpaths $\mathbf{q}^{h_i}\in \Gamma_2^{h_i}$  for  $i\in\{1,2,3\}$.  Thus, $p$ is identified with the tuple $\mathfrak{p}=\big(\mathbf{q}^{h_1},\mathbf{q}^{h_2},\mathbf{q}^{h_3}\big) $ in the one-to-one correspondence~(\ref{Bijection}).}
\end{minipage}
\end{figure}

\begin{definition}\label{DefMicroPart} We define the \textit{local partition function associated to} $h \in E_N$ as the random variable
$$ W_n^{h}(\beta)\,:=\,   \frac{1}{|\Gamma_{n-N}|} \sum_{\mathbf{q} \in  \Gamma_{n}^{h}  }    \prod_{ \substack{ a\in  \mathbf{q}  } } \textup{exp}\big\{\beta\omega_a -\lambda(\beta)\big\}  \, . $$
\end{definition}
Note that each $W_n^h(\beta)$ is equal in distribution to the partition function $W^\omega_{n-N}(\beta)$ since $\Gamma_n^h \cong \Gamma_{n-N} $. Furthermore, since the local vertex sets in the collection $\{V_n^h\}_{h \in E_N}$ are disjoint and $W_n^h(\beta)$ is a function of the array of random variables $\{\omega_a\}_{a\in V_n^h}$, the random variables in the family $\big\{W_n^h(\beta)\big\}_{h \in E_N}$ are i.i.d. The following proposition shows how the conditional expectation of the partition function in (\ref{Remove}) can be written in terms of the $\mathcal{Q}$ map and the family of local partition functions $\big\{W_{n}^{h}(\beta)\big\}_{h\in E_N}$. 

\begin{proposition}\label{PropReduce} Let $N,n\in \mathbb{N}_0$, and assume $N \leq  n$. For each $h \in E_N$, define $ X^{\omega}_h(\beta):= W_{n}^{h}(\beta)-1 $. The conditional expectation of $W_{n}^{\omega}(\beta)$ with respect to $\mathcal{F}_{n}^{N}$ can be written in the form
$$
\mathbb{E}\Big[W_{n}^{\omega}(\beta)\,\Big|\, \mathcal{F}_{n}^{N}  \Big]\,=\,1\,+\,\mathcal{Q}^{N}\big\{ X_h^{\omega}(\beta) \big\}_{h\in E_{N}  }\,.  $$ 
\end{proposition}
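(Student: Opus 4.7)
The plan is to decompose the conditional expectation by using the hierarchical embedding to regroup the sum over paths in $\Gamma_n$ into a nested sum over coarse-grained paths $q\in\Gamma_N$ and sub-paths through each embedded copy $D_n^h$; the inner sums will then collapse to local partition functions, and the outer sum will be recognized as the path-sum realized by iterating $\mathcal{Q}$.

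Starting from~(\ref{Remove}), I would apply the bijection~(\ref{Bijection}) together with the vertex partition~(\ref{PathPart}) and interchange sum with product to get
$$\E\big[W_n^\omega(\beta)\,\big|\,\mathcal{F}_n^N\big]\,=\,\frac{1}{|\Gamma_n|}\sum_{q\in\Gamma_N}\prod_{h\pmb{\in} q}\Bigg(\sum_{\mathbf{q}^h\in\Gamma_n^h}\prod_{a\in\mathbf{q}^h}\exp\{\beta\omega_a-\lambda(\beta)\}\Bigg).$$
By Definition~\ref{DefMicroPart} and $\Gamma_n^h\cong\Gamma_{n-N}$, the inner parenthesized expression equals $|\Gamma_{n-N}|W_n^h(\beta)$; since every path in $\Gamma_N$ contains exactly $b^N$ edges one has $|\Gamma_n|=|\Gamma_N|\cdot|\Gamma_{n-N}|^{b^N}$, so the display collapses to
$$\E\big[W_n^\omega(\beta)\,\big|\,\mathcal{F}_n^N\big]\,=\,\frac{1}{|\Gamma_N|}\sum_{q\in\Gamma_N}\prod_{h\pmb{\in} q}\big(1+X_h^\omega(\beta)\big).$$

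The statement then reduces to the purely combinatorial identity that for any real array $\{x_h\}_{h\in E_N}$ and any $N\in\N$,
$$1+\mathcal{Q}^N\{x_h\}_{h\in E_N}\,=\,\frac{1}{|\Gamma_N|}\sum_{q\in\Gamma_N}\prod_{h\pmb{\in} q}(1+x_h),$$
which I would prove by induction on $N$. The base case $N=1$ follows directly from Definition~\ref{DefArrayMap}, since the $b$ paths in $\Gamma_1$ correspond to choices of branch $i\in\{1,\ldots,b\}$ traversing the segments $(i,1),\ldots,(i,b)$. For the inductive step, applying the hypothesis at level $N-1$ to $\{w_h\}_{h\in E_{N-1}}:=\mathcal{Q}\{x_h\}_{h\in E_N}$ and then invoking the one-step embedding $\Gamma_N\cong\bigcup_{q\in\Gamma_{N-1}}\prod_{h\pmb{\in} q}\Gamma_1$ together with $|\Gamma_N|=|\Gamma_{N-1}|\cdot b^{b^{N-1}}$ regroups the right-hand side into $\frac{1}{|\Gamma_N|}\sum_{q\in\Gamma_N}\prod_{h\pmb{\in} q}(1+x_h)$. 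Specializing $x_h=X_h^\omega(\beta)$ yields the proposition; the case $N=0$ is immediate since $V_n\setminus V_0=V_n$ and $\mathcal{Q}^0$ is the identity. The proof is essentially bookkeeping with cardinalities and nested path decompositions, and the one place requiring attention is matching one application of $\mathcal{Q}$ with one generation of hierarchical refinement.
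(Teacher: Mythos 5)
Your proof is correct and follows essentially the same route as the paper's: the same coarse-graining bijection~(\ref{Bijection}), the same factorization of the inner sums into local partition functions, and the same reduction to the combinatorial identity $1+\mathcal{Q}^N\{x_h\}_{h\in E_N}=\frac{1}{|\Gamma_N|}\sum_{q\in\Gamma_N}\prod_{h\pmb{\in} q}(1+x_h)$. The only difference is that the paper imports this last identity from~\cite[Proposition 5.5]{Clark2}, whereas you prove it directly by induction on $N$; your induction is sound.
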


\begin{proof}Note that the one-to-one correspondence (\ref{Bijection}) implies that $|\Gamma_n| = |\Gamma_N|\cdot |\Gamma_{n-N}|^{b^N} $ because there are $b^N$ edges $h$ for each $q \in \Gamma_N$, and each subgraph $\Gamma_n^h$ is isomorphic to $\Gamma_{n-N}$. Thus, using~(\ref{PathPart}) we can write the right-hand side of (\ref{Remove}) as in the first equality below.
 \begin{align*} 
 \frac{1}{ |\Gamma_{n}|}\sum_{ p\in \Gamma_{n  }}\prod_{\substack{a \in  p \\ a\in V_n\backslash V_{N}  }} \textup{exp}\big\{\beta\omega_a -\lambda(\beta)\big\}\,=\,&\,\frac{1}{ |\Gamma_{N}|} \sum_{ q\in \Gamma_{N  } } \frac{1}{|\Gamma_{n-N}|^{b^N}}\sum_{\mathfrak{p} \in \prod_{h\pmb{\in} q} \Gamma_n^{ h }   }   \prod_{h\pmb{\in} q}  \prod_{a\in \mathfrak{p}(h) } \textup{exp}\big\{\beta\omega_a -\lambda(\beta)\big\} \\
\,=\,&\,\frac{1}{ |\Gamma_{N}|} \sum_{ q\in \Gamma_{N  } } \prod_{h \pmb{\in} q}\bigg( \frac{1}{|\Gamma_{n-N}|} \sum_{\mathbf{q}\in \Gamma_n^h  }    \prod_{a \in \mathbf{q} } \textup{exp}\big\{\beta\omega_a -\lambda(\beta)\big\} \bigg) \\
\,=\,&\,\frac{1}{ |\Gamma_{N}|} \sum_{ q\in \Gamma_{N } } \prod_{h \pmb{\in} q}W_{n}^h(\beta)\,=\,1\,+\,\mathcal{Q}^{N}\big\{ W_{n}^{h}(\beta)-1 \big\}_{h\in E_{N}  }\,
\end{align*}
The second equality above results from factoring, and the third equality holds since the expression in parentheses has the form of the local partition function $W_{n}^{h }(\beta)$ from Definition~\ref{DefMicroPart}. Finally, the last equality is equivalent to what was proved in~\cite[Proposition 5.5]{Clark2}. 
\end{proof}

\subsection{Variance and the $\boldsymbol{\mathcal{Q}}$ map}\label{SecVarQMap}

The previous subsection showed that for large $n$ and   $N=\lfloor\log n \rfloor$, the partition function $W_{n}^{\omega}(\beta_{n,r})$ is approximately equal (in $L^2$ norm) to an $N$-fold application of the $\mathcal{Q}$ map against an i.i.d.\ array of random variables indexed by $E_N$.  Note that if $\{X_h\}_{h\in E_k}$ is an i.i.d.\ family of centered random variables with variance $\mathcal{V}$, then the random variables in the array $\mathcal{Q}\{X_h\}_{h\in E_k}$ are centered with variance $M(\mathcal{V})=\frac{1}{b}\big[(1+\mathcal{V})^b-1\big]$.  If $M^k$ denotes the $k$-fold composition of the function $M:[0,\infty)\rightarrow [0,\infty)$, then the random variable $\mathcal{Q}^k\{X_h\}_{h\in E_k}$ has variance $M^k(\mathcal{V})$. The following proposition  from~\cite[Lemma 1.1]{Clark1} shows that  $M^k(\mathcal{V}_k)$ converges to a nontrivial limit when the sequence of positive numbers $(\mathcal{V}_k)_{k\in \mathbb{N}}$ vanishes in an appropriate way; see also~\cite[Appendix B]{Clark2} for a heuristic discussion of the consistency between properties (I) and (II) below.

\begin{proposition}\label{PropVar} For any $b\in \{2,3,\ldots\}$, there exists a unique continuously differentiable increasing function $R:\R\rightarrow (0,\infty)$  satisfying (I) and (II) below:
\begin{enumerate}[(I)]
\item Composing   $R(r)$ with the map $M$  translates the parameter $r$ by $1$:\, $ M\big(R(r)\big)\,=\, R(r+1) $.

\item As $r\uparrow \infty$, $R(r)$ diverges to $\infty$.  As $r\downarrow -\infty$, $R(r)$ has the vanishing asymptotics 
$$  R(r)\,=\,\frac{\kappa^2}{-r}\bigg(1 \,+\, \frac{ \eta\log(-r) }{ -r }\bigg)\,+\,\mathit{O}\bigg( \frac{\log^2(-r)}{|r|^3} \bigg)  \,. $$

\end{enumerate}
Moreover,   if for some $r\in \R$ the sequence of positive real numbers $(\mathcal{V}_{N,r})_{N\in \mathbb{N}} $ has the large-$N$ asymptotics 
\begin{align}\label{xAssump}
 \mathcal{V}_{N,r} \, = \, \frac{\kappa^2}{N} \bigg(1 \, +\,\frac{ \eta\log N}{N}\,+\,\frac{r}{N}\bigg)\,+\,\mathit{o}\Big(\frac{1}{N^2}  \Big) \,,
\end{align}
 then $ M^{N}(\mathcal{V}_{N,r}   )$ converges to $R(r) $ as $N\uparrow \infty$.
\end{proposition}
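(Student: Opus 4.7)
The plan is to interpret property (I) as an Abel equation for $M$ at its parabolic fixed point $0$, construct $R$ as the inverse of an Abel coordinate with the prescribed asymptotic, and then reduce the convergence statement to continuity of $R$. Writing $\kappa^2 = 2/(b-1)$, the Taylor expansion gives $M(y) = y + y^2/\kappa^2 + cy^3 + O(y^4)$ with $c=(b-1)(b-2)/6$, so $M(y)>y$ on $(0,\infty)$ and the backward iterates $u_k := M^{-k}(u_0)$ strictly decrease to $0$ from any small $u_0>0$.

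First I would establish the discrete form of the claimed asymptotic. Passing to $v_k := 1/u_k$ and expanding yields
\begin{align*}
v_{k+1}-v_k \,=\, \frac{1}{\kappa^2}\,+\,\frac{c-1/\kappa^4}{v_{k+1}}\,+\,O\!\left(\frac{1}{v_{k+1}^2}\right),
\end{align*}
which telescopes, using the arithmetic identity $(c-1/\kappa^4)\kappa^2 = -(b+1)/6$ and the rough bound $v_k\sim k/\kappa^2$, to
\begin{align*}
v_N \,=\, \frac{N}{\kappa^2}\,-\,\frac{b+1}{6}\log N\,+\,O(1).
\end{align*}
Inverting gives $u_N = \kappa^2/N + \kappa^2\eta\log N/N^2 + \text{(next order)}$ with the coefficient $\kappa^2\eta = \kappa^4(b+1)/6$ reducing exactly to $\kappa^2 \cdot (b+1)/(3(b-1))$. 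This is the discrete form of (II). I would then construct $R$ on a fundamental domain $[r_0,r_0+1)$ as a smooth strictly increasing interpolation compatible with this asymptotic, and extend to all of $\R$ via $R(r+n):=M^n(R(r))$ for $n\in \Z$; property (I), positivity, monotonicity, and $C^1$-regularity all survive composition with the smooth diffeomorphism $M$, and the discrete asymptotic promotes to the continuous one in (II) because $M$ is analytic near $0$.

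Uniqueness follows from a rigidity argument: given a second candidate $\widetilde R$, the function $f := R^{-1}-\widetilde R^{-1}$ on $(0,\infty)$ satisfies $f(M(x))=f(x)$, while the two-term asymptotic in (II) is sharp enough to force $f(x)\to 0$ as $x\downarrow 0$; since the backward $M$-orbit of every $x>0$ accumulates at $0$, $M$-invariance then forces $f\equiv 0$. For the convergence statement, define $r_N$ implicitly by $R(r_N) = \mathcal{V}_{N,r}$ for large $N$. Matching the expansion of $R(r_N)$ against the hypothesis~(\ref{xAssump}) term by term shows $r_N = r-N+o(1)$: the $\log N/N^2$ and $1/N^2$ scales line up because $\mathcal{V}_{N,r}$ has the exact same $\kappa^2/N + \kappa^2\eta\log N/N^2$ leading terms as $R(-N)$, and the extra $r/N^2$ correction in~(\ref{xAssump}) shifts the inverted coordinate by $r+o(1)$. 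Then
\begin{align*}
M^N(\mathcal{V}_{N,r}) \,=\, M^N\!\big(R(r_N)\big) \,=\, R(r_N+N) \,\longrightarrow\, R(r)
\end{align*}
by continuity of $R$.

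I expect the main technical obstacle to be the sharpness of the two-term expansion, specifically the remainder bound $O(\log^2(-r)/|r|^3)$ rather than the softer $O(1/r^2)$ that falls out of a naive telescoping. Producing this sharp remainder requires expanding $M$ to three orders, bookkeeping the harmonic-sum error terms carefully, and running an inductive estimate that absorbs accumulated errors across $\sim |r|$ backward iterates; the shift freedom in choosing $r_0$ must then be used to kill the leading $O(1/r^2)$ constant and leave only the logarithmic correction and the cubic remainder. Without that sharpness the term-by-term inversion $r_N = r-N+o(1)$ in the convergence argument would fail, since the $r/N^2$ signal we need to detect in~(\ref{xAssump}) is smaller than a naive $O(1/N^2)$ error would allow.
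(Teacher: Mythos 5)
The paper does not actually prove Proposition~\ref{PropVar}: it is imported verbatim from~\cite[Lemma 1.1]{Clark1}, so there is no internal proof to compare against. Judged on its own terms, your plan is the natural one for a marginally repelling (parabolic) fixed point and your arithmetic all checks out: with $M(y)=y+y^2/\kappa^2+cy^3+O(y^4)$, $c=(b-1)(b-2)/6$, one indeed gets $(c-1/\kappa^4)\kappa^2=-(b+1)/6$ and $\kappa^2(b+1)/6=\eta$, so the telescoped backward orbit gives $1/u_N=N/\kappa^2-\frac{b+1}{6}\log N+C+O(\log N/N)$ after one bootstrap, which inverts to the two-term expansion in (II) with a remainder even slightly better than the stated $O(\log^2(-r)/|r|^3)$. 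The uniqueness argument via the $M$-invariant function $f=R^{-1}-\widetilde R^{-1}$ is correct: the two-term asymptotic forces the periodic discrepancy $p(r)=g(r)-r$ to be $O(\log^2(-r)/|r|)$, hence zero. The reduction of the convergence claim to $r_N:=R^{-1}(\mathcal{V}_{N,r})=-N+r+o(1)$ by monotonicity, followed by $M^N(R(r_N))=R(r_N+N)\to R(r)$, is also sound, and you correctly identify that the $O(\log^2 N/N^3)$ remainder is exactly what makes the $r/N^2$ term detectable.

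The one place where the argument is a plan rather than a proof is the existence half. Declaring $R$ on a fundamental domain by "a smooth strictly increasing interpolation compatible with this asymptotic" presupposes the hard fact: that the constant $C(u_0)$ in the telescoped sum $1/u_N^{(u_0)}=N/\kappa^2-\frac{b+1}{6}\log N+C(u_0)+O(\log N/N)$ depends continuously and strictly monotonically on the seed $u_0$, with the error uniform over the fundamental domain, and that the resulting Abel coordinate is $C^1$ (which requires differentiating the telescoping identity, not just iterating it, plus the matching condition $R'((r_0+1)^-)=M'(R(r_0))R'(r_0^+)$ at the seam). Without that uniformity, the extension $R(r+n)=M^n(R(r))$ satisfies (I) but the continuum asymptotic (II) is only known along countably many backward orbits. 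You flag this as "the main technical obstacle," which is honest, but it is precisely the content of the cited lemma; as written, the existence direction is an outline whose central estimate is asserted rather than established.
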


The above proposition and the  discussion preceding it suggest the possibility that if $\big(\{X_h^{(N)}\}_{h \in E_N}\big)_{N\in \mathbb{N}}$ is a  sequence of arrays of i.i.d.\ random variables with mean zero and variances $\mathcal{V}_{N,r}$ vanishing with the asymptotics in~(\ref{xAssump}), then the random variables $\mathcal{Q}^N\big\{X_h^{(N)}\big\}_{h \in E_N}$ converge in distribution as $N \uparrow \infty$. We explore this idea in the next subsection.  Recall that $X^{\omega}_h(\beta):=W^{h}_n(\beta)-1  $ and  that the random variables in the array $\big\{W^{h}_n(\beta)\big\}_{h \in E_N}$ are i.i.d.\ copies of $W_{n-N}^{\omega}(\beta)$. The connection between the inverse  temperature scaling $\beta_{n,r}>0$  in~(\ref{BetaForm2}) and the asymptotic form~(\ref{xAssump}) is given by the following lemma, which is our primary technical obstacle.
\begin{lemma}\label{LemMtilde} The variance of  $W_{n-\lfloor \log n\rfloor}^{\omega}(\beta_{n,r})$ has the large-$n$ asymptotics 
\begin{align}\label{Unflat}
\textup{Var}\Big( W_{n-\lfloor \log n\rfloor}^{\omega}(\beta_{n,r}) \Big)\,=\,\frac{\kappa^2}{\lfloor \log n \rfloor}\left(1 \,+\,\frac{ \eta\log\lfloor \log n\rfloor }{  \lfloor \log n\rfloor }\,+\, \frac{ r  }{ \lfloor \log n\rfloor }\right) \,+\, \mathit{o}\left(\frac{1}{\log^2 n}  \right)\,.
\end{align}
\end{lemma}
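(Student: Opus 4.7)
The plan is to analyze the variance recursion $\varrho_{k+1}(\beta) = M_V(\varrho_k(\beta))$ from (\ref{RecEqVar}) directly for $\beta = \beta_{n,r}$ and $k$ up to $n - \lfloor \log n\rfloor$, and to show that the resulting value matches the canonical form $\mathcal{V}_{\lfloor \log n\rfloor, r}$ appearing in Proposition~\ref{PropVar}. Once this is established, the lemma follows immediately. The analysis reduces to a precise asymptotic study of a scalar nonlinear recursion, guided by a tangent substitution motivated by the continuum limit of the iteration.

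First I would expand $V(\beta) = e^{\lambda(2\beta) - 2\lambda(\beta)} - 1$ using the cumulant expansion $\lambda(\beta) = \tfrac{1}{2}\beta^2 + \tfrac{\tau}{6}\beta^3 + O(\beta^4)$ to obtain $V(\beta) = \beta^2 + \tau\beta^3 + O(\beta^4)$. Inserting (\ref{BetaForm2}), the $\widehat{\kappa}_b\tau/2$ correction in $\beta_{n,r}$ is placed precisely to cancel the $\tau$-contribution, leaving
\begin{equation*}
V(\beta_{n,r}) \,=\, \frac{\widehat{\kappa}_b^{\,2}}{n^2}\left(1 + \frac{2\eta \log n}{n} + \frac{2(r - \varsigma_b)}{n}\right) + o\big(1/n^3\big).
\end{equation*}
Next I would analyze the iteration $x_{k+1} = M_V(x_k)$ with $x_0 = 0$ and $V := V(\beta_{n,r})$. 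The Taylor expansion $M_V(x) = x + \tfrac{b-1}{2}x^2 + \tfrac{b-1}{b}V + (b-1)xV + \tfrac{(b-1)(b-2)}{6}x^3 + \cdots$ suggests comparison with the separable Riccati ODE $\dot{x} = \tfrac{b-1}{2}x^2 + \tfrac{b-1}{b}V$, whose solution through the origin is $x(t) = A\tan(\omega t)$ with $A = \sqrt{2V/b}$ and $\omega = (b-1)A/2$. The leading-order values $A_0 = \pi/((b-1)n)$ and $\omega_0 = \pi/(2n)$ (using $\widehat{\kappa}_b = \pi\sqrt{b}/(\sqrt{2}(b-1))$) produce at $t = n - \lfloor \log n\rfloor$ an argument $\pi/2 - \pi\lfloor \log n\rfloor/(2n)$, and the identity $\cot(\pi\lfloor\log n\rfloor/(2n)) \sim 2n/(\pi \lfloor \log n\rfloor)$ recovers the leading $\kappa^2/\lfloor\log n\rfloor$.

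To obtain $o(1/\log^2 n)$ precision I would introduce the discrete phase $\theta_k := \arctan(x_k/A)$ so that $\theta_0 = 0$, and track the increment $\theta_{k+1} - \theta_k$ by substituting the exact $M_V$. This increment equals $\omega$ plus explicit corrections with three distinct origins: (i) the cubic-in-$x$ and $xV$ terms of $M_V$ beyond the Riccati model, (ii) the discretization error from replacing the ODE flow by one iteration of the map, and (iii) the subleading terms of $V(\beta_{n,r})$ entering through $A$. Summing the increments from $k = 0$ to $n - \lfloor\log n\rfloor - 1$ gives $\theta_{n - \lfloor \log n\rfloor} = \pi/2 - \epsilon_n$ for an explicit $\epsilon_n$; expanding $A\tan(\theta) = A\cot(\epsilon_n)$ by $\cot(\epsilon) = 1/\epsilon - \epsilon/3 + O(\epsilon^3)$ and separating the resulting terms by scale should reproduce the three-term asymptotic, with the $\eta\log\lfloor\log n\rfloor$ correction coming from the $\log n$ term in $V(\beta_{n,r})$ propagating through $\sqrt{V}$, and the $r$ correction coming from the constant piece of $V(\beta_{n,r})$ together with the specific value $\varsigma_b = (\log(\pi/2)+2)\eta_b$.

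The main obstacle is controlling the accumulated error to this precision. With $x_k \sim \kappa^2/(n-k)$ late in the orbit, the cubic corrections to $M_V$ contribute a per-step error $O(1/(n-k)^3)$, which sums to precisely $O(1/\log^2 n)$---the borderline of the required accuracy---so one must verify that their coefficient conspires correctly with the discretization correction. The constant $\varsigma_b$ is what makes this conspiracy work: the $\log(\pi/2)$ piece of $\varsigma_b$ should emerge from the argument shift in expanding $\cot$ near $\pi/2$, while the $2\eta_b$ piece should absorb the combined discretization-plus-cubic correction. Verifying these exact algebraic identities between the dynamical corrections and the definition of $\varsigma_b$ is the principal technical work.
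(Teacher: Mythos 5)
Your proposal follows essentially the same route as the paper: your phase variable $\theta_k=\arctan(x_k/A)$ with $A=\sqrt{2V/b}$ is exactly $\tfrac{\pi}{2}\mathbf{r}_k^{(n,r)}$ in the paper's notation, and the paper likewise telescopes the increments $\mathbf{r}_{k}+\tfrac{1}{\mathbf{n}_{n,r}}-\mathbf{r}_{k+1}$, identifies the borderline $O(1/(n-k))$-scale corrections as exact discrete derivatives of $\log\cos$ and $\sin^2$ so that they collapse, and recovers $\varsigma_b=(\log\tfrac{\pi}{2}+2)\eta_b$ from precisely the two sources you name. The plan is sound; what remains is the execution you flag as the principal technical work, which the paper carries out via the error bounds (i)--(vii) in part (D) together with a self-consistency bootstrap showing $1-\mathbf{r}_{n-\lfloor\log n\rfloor}\geq\tfrac{\log n}{2n}$.
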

\vspace{.3cm}

 Our proof of Lemma~\ref{LemMtilde} refines a technique from the proof of~\cite[Lemma 5.16]{US} and is located in Section~\ref{SecVarAnal}. We will now summarize the structure of the  argument.  For $n\in \mathbb{N}$ and $r\in \R$, define the map $M_{n,r}(x) :=\frac{1}{b}\big[(1+x)^b \big(1+V_{n,r}\big)^{b-1}-1 \big]  $ on $[0,\infty)$, wherein  $V_{n,r}$ is the variance of the random variable $\textup{exp}\big\{\beta_{n,r}\omega-\lambda(\beta_{n,r}) \big\} $.  The large-$n$ asymptotic form~(\ref{BetaForm2}) for   $\beta_{n,r}$ implies that $V_{n,r}=\frac{\widehat{\kappa}^2}{n^2}\big( 1   \,+\,\frac{2\eta\log n }{n}\,+\, \frac{2r-2\varsigma}{n} \big)+\,\mathit{o}\big( \frac{1}{n^3}  \big)  $.
 Recall from~(\ref{RecEqVar}) that the sequence of variances
   $\big(\textup{Var}\big( W_{k}^{\omega}(\beta_{n,r}) \big)\big)_{k\in \N_0} $  satisfies the recursive relationship
   \begin{align}\label{VarRecur}
\textup{Var}\big( W_{k+1}^{\omega}(\beta_{n,r}) \big)\,=\, M_{n,r}\Big(\textup{Var}\big( W_{k}^{\omega}(\beta_{n,r}) \big)\Big) \,.
   \end{align}
   To perform our analysis, we transform  $\big( \textup{Var}\big( W_{k}^{\omega}(\beta_{n,r}) \big) \big)_{k\in \N_0}$ 
 to a sequence of values $\big( \mathbf{r}_k^{(n,r)} \big)_{k\in \N_0} $ in  the interval $[0,1)$ through the rule
$$ \mathbf{r}_k^{(n,r)}\,:= \,  \frac{2}{\pi}\tan^{-1}\bigg(\frac{2\mathbf{n}_{n,r}}{\pi \kappa^2}\textup{Var}\big( W_{k}^{\omega}(\beta_{n,r}) \big)\bigg) \,, $$
for $\mathbf{n}_{n,r}:=  \frac{\pi \kappa}{2}\big(\frac{b}{b-1}\big)^{1/2}V_{n,r}^{-1/2}$, which has the large-$n$ asymptotics
\begin{align}\label{nAsymp}
 \mathbf{n}_{n,r}\,=\,n\,-\,\eta\log n \,-\,r\, +\,\varsigma  \,+\,\mathit{o}(1) \,. 
 \end{align}
 The sequence $\big( \mathbf{r}_k^{(n,r)} \big)_{k\in \N_0}$ starts at  $\mathbf{r}_0^{(n,r)}=0$ and converges monotonically up to $1$, since the sequence $\big( \textup{Var}\big( W_{k}^{\omega}(\beta_{n,r}) \big) \big)_{k\in \N_0}$ is  increasing and  diverges to $\infty$ as a consequence of~(\ref{VarRecur}). 
 Moreover, the desired asymptotic form~(\ref{Unflat}) is equivalent to  
\begin{align}\label{EquivForm}
1\,-\,\mathbf{r}^{(n,r)}_{n-\lfloor\log n\rfloor}\,=\,\frac{\lfloor \log n\rfloor \,-\,\eta\log\log n\,-\,r}{n}\,+\,\mathit{o}\Big(\frac{1}{n}\Big) \,,
\end{align}
and our problem can thus be reframed in terms of $\big(\ \mathbf{r}_k^{(n,r)} \big)_{k\in \N_0} $. Since $\mathbf{r}^{(n,r)}_{0}=0$, the difference between $1$ and $\mathbf{r}^{(n,r)}_{n-\lfloor\log n\rfloor}$ can be rewritten in terms of the telescoping sum
 \begin{align}\label{Thissy}
 1\,-\,\mathbf{r}^{(n,r)}_{n-\lfloor \log n\rfloor}\,=\,\bigg(1\,-\,\frac{n-\lfloor \log n\rfloor    }{ \mathbf{n}_{n,r}  }\bigg)\,+\,\sum_{0 \leq k < n-\lfloor \log n\rfloor}\bigg(\mathbf{r}^{(n,r)}_{k}\,+\,\frac{1}{\mathbf{n}_{n,r}}\,-\,\mathbf{r}^{(n,r)}_{k+1}   \bigg) \,.
 \end{align}
A substantial portion of our analysis is directed towards converting the recursive relation~(\ref{VarRecur}) into an approximate form for $\mathbf{r}^{(n,r)}_{k}+\frac{1}{\mathbf{n}_{n,r}}-\mathbf{r}^{(n,r)}_{k+1}$ that telescopes within~(\ref{Thissy}), this being
\begin{align*}
\mathbf{r}^{(n,r)}_{k}&\, +\,\frac{1}{\mathbf{n}_{n,r}}\,-\,\mathbf{r}^{(n,r)}_{k+1}\ \\  \,\approx \,&\, -\frac{\eta}{n}\bigg[ \log\Big( \cos\Big(\frac{\pi}{2}\mathbf{r}^{(n,r)}_{k+1}\Big) \Big) \,-\,\log\Big( \cos\Big(\frac{\pi}{2}\mathbf{r}^{(n,r)}_{k}\Big)  \Big)       \bigg]\, 
-\, \frac{2\eta}{n}\bigg[ \sin^2\Big(\frac{\pi}{2}\mathbf{r}^{(n,r)}_{k+1}\Big) \,-\, \sin^2\Big(\frac{\pi}{2}\mathbf{r}^{(n,r)}_{k}\Big)      \bigg]   \,.      
\end{align*}
 After applying the above approximation,~(\ref{Thissy}) collapses to
\begin{align*}
 1\,-\,\mathbf{r}^{(n,r)}_{n-\lfloor \log n\rfloor}\, \approx \, \bigg(1\,-\,\frac{n-\lfloor \log n\rfloor    }{ \mathbf{n}_{n,r}  }\bigg)\, -\,\frac{\eta}{n} \log\Big( \cos\Big(\frac{\pi}{2}\mathbf{r}^{(n,r)}_{n-\lfloor \log n\rfloor}\Big) \Big) \,-\, \frac{2\eta}{n} \sin^2\Big(\frac{\pi}{2}\mathbf{r}^{(n,r)}_{n-\lfloor \log n\rfloor}\Big) \, 
\end{align*}
because  $\sin^2\big(\frac{\pi}{2}\mathbf{r}^{(n,r)}_{0}\big)=0  $ and $\log\big( \cos\big(\frac{\pi}{2}\mathbf{r}^{(n,r)}_{0}\big)  \big)=0$. Using~(\ref{nAsymp}) and the approximations $\cos\big(\frac{\pi}{2}x\big)\approx \frac{\pi}{2}(1-x)$ and $ \sin^2\big(\frac{\pi}{2}x\big)\approx 1$  for $x\in (0,1)$ close to $1$, we have
 \begin{align*}
 1\,-\,\mathbf{r}^{(n,r)}_{n-\lfloor \log n\rfloor}\, \approx\,&\, \frac{ \lfloor \log n\rfloor \,-\,\eta \log n\,-\,r\,+\,\varsigma    }{ n } \, -\,\frac{\eta}{n} \log\Big(\frac{\pi}{2} \Big( 1-\mathbf{r}^{(n,r)}_{n-\lfloor \log n\rfloor}\Big) \Big) \,-\, \frac{2\eta}{n}  \,. 
\end{align*}
At last, rearranging the above and using that $\varsigma=(\log\frac{\pi}{2}+2)\eta$, 
 \begin{align*}
  1\,-\,\mathbf{r}^{(n,r)}_{n-\lfloor \log n\rfloor}\, \approx\,&\,  \frac{\lfloor \log n\rfloor  \,  -\,\eta \log \log n\,-\,r  }{ n } \, -\,\frac{\eta}{n} \log\Bigg( \frac{ n\big(1-\mathbf{r}^{(n,r)}_{n-\lfloor \log n\rfloor} \big)}{ \log n }\Bigg)  \, .
 \end{align*}
  When rigorously formulated, this approximation for $1-\mathbf{r}^{(n,r)}_{n-\lfloor \log n\rfloor}$ can be plugged back into itself to complete the derivation of~(\ref{EquivForm}), the second term on the right side being $\mathit{o}\big(\frac{1}{n}\big)$.

\subsection{A limit theorem concerning the $\boldsymbol{\mathcal{Q}}$ map}

Before continuing to the proof Theorem~\ref{ThmMain}, we state a technical lemma and recall~\cite[Theorem 5.16]{Clark2} and~\cite[Theorem 5.22]{Clark2}, restated in Theorems~\ref{ThmExist} and~\ref{ThmUnique}, respectively.

\begin{definition}\label{DefRegular}
A sequence of edge-labeled arrays of random variables $\big(\{ X_h^{(N)} \}_{h\in  E_{N} }\big)_{N\in \mathbb{N}} $  taking values in $[-1,\infty)$ is said to be \textit{regular with parameter $r\in \R$} when the conditions (I)--(III) below hold.
\begin{enumerate}[(I)]

\item For each $N\in \mathbb{N}$, the random variables in the array $\big\{ X_h^{(N)} \big\}_{h\in  E_{N} } $ are centered and i.i.d.

\item The variance of the random variables in the array $\big\{ X_h^{(N)} \big\}_{h\in  E_{N} } $ has the large-$N$ asymptotics
\begin{align*}
\textup{Var}\big(  X_h^{(N)}  \big)\,=\,\frac{\kappa^2}{N} \bigg(1 \, +\, \frac{ \eta\log N}{N}\ +\,\frac{r}{N}\bigg)\,+\,\mathit{o}\Big(\frac{1}{N^2}  \Big)\,.
\end{align*}

\item For each $m\in \{4,6,\ldots\}$, the $m^{th}$ moment of the random variables in the array $\big\{ X_h^{(N)} \big\}_{h\in  E_{N} } $  vanishes as $N\uparrow \infty$.

\end{enumerate}
Moreover, $ \big(\{ X_h^{(N)} \}_{h\in  E_{N} }\big)_{N\in \mathbb{N}}$  is called \textit{minimally regular} if (I) and (II) hold, but (III) is only assumed for $m=4$.
\end{definition} 
 
To apply Theorem~\ref{ThmUnique} below in the proof of Theorem~\ref{ThmMain}, we observe that when $\lfloor \log n \rfloor $ is identified with $N$, the array of centered local partition functions $\big\{ X^{\omega}_h(\beta_{n,r})\big\}_{h\in E_{\lfloor \log n \rfloor }} $  satisfies the conditions of Definition~\ref{DefRegular}.\footnote{More precisely, if we choose any sequence of natural numbers $(a_N)_{N\in \mathbb{N}} $   such that $\lfloor \log  a_N \rfloor =N  $, then $\big\{ X^{\omega}_h(\beta_{a_N,r})\big\}_{h\in E_{N}} $ is a regular sequence with parameter $r$ when indexed by $N$.}     Condition (II) holds as a consequence of Lemma~\ref{LemMtilde}  since each $X^{\omega}_h(\beta_{n,r})$ is equal distribution to $ W_{n-\lfloor \log n\rfloor}^{\omega}(\beta_{n,r})$.  Moreover, the lemma below, which we prove in Section~\ref{SecLemmaHM}, verifies condition (III).
\begin{lemma}\label{LemmaHM} For each $m\in \mathbb{N}$, the $m^{th}$ centered moment of  $W_{n-\lfloor \log n\rfloor}^{\omega}(\beta_{n,r})$ vanishes as $n\uparrow \infty$. 
\end{lemma}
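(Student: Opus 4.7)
The plan is to prove by induction on $m$ the stronger quantitative statement that there is a constant $C_m>0$ depending only on $m$ and $b$ such that the centered moment
\begin{align*}
\mu_k^{(m)}(\beta) \,:=\, \mathbb{E}\big[(W_k^{\omega}(\beta) - 1)^m\big]
\end{align*}
satisfies $|\mu_k^{(m)}(\beta_{n,r})| \leq C_m\, \varrho_k(\beta_{n,r})^{\lceil m/2\rceil}$ uniformly in $k,n\in\mathbb{N}_0$. Because the sequence $(\varrho_k(\beta_{n,r}))_{k\in \mathbb{N}_0}$ is monotone increasing by~(\ref{RecEqVar}) and $\varrho_{n-\lfloor \log n\rfloor}(\beta_{n,r}) = O(1/\log n)$ by Lemma~\ref{LemMtilde}, this bound immediately yields $|\mu_{n-\lfloor \log n\rfloor}^{(m)}(\beta_{n,r})| = O\big((\log n)^{-\lceil m/2\rceil}\big)$, proving the lemma. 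The base case $m=1$ is trivial since $\mathbb{E}[W_k^\omega(\beta)] = 1$, and $m=2$ is tautological as $\mu_k^{(2)} = \varrho_k$.

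For the inductive step, I would apply the distributional recursion~(\ref{PartHierSymmII}) and write $b\,(W_{k+1}^{\omega}(\beta) - 1) \stackrel{d}{=} \sum_{i=1}^{b} (Z_i - 1)$, where $Z_i := \big(\prod_{j} W_k^{(i,j)}\big)\big(\prod_{\ell} e_{i,\ell}\big)$ with $e_{i,\ell} := \exp\{\beta \omega_{i,\ell} - \lambda(\beta)\}$ are i.i.d.\ and mean one. A multinomial expansion, using that each $Z_i-1$ is centered, gives
\begin{align*}
b^m \, \mu_{k+1}^{(m)}(\beta) \,=\, \sum_{\substack{\alpha_1+\cdots+\alpha_b = m \\ \alpha_i \in \{0\} \cup \{2,3,\ldots\}}} \binom{m}{\alpha_1,\ldots,\alpha_b} \prod_{i=1}^{b} \mathbb{E}\big[(Z_i - 1)^{\alpha_i}\big].
\end{align*}
Each $\mathbb{E}[(Z_i - 1)^{\alpha_i}]$ is then analyzed by a second expansion of $Z_i - 1$ as a sum over nonempty subsets of the $2b-1$ centered variables $\widetilde{W}_{i,j} := W_k^{(i,j)} - 1$ and $\widetilde{e}_{i,\ell} := e_{i,\ell} - 1$, raised to the $\alpha_i$-th power. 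By independence, the terms factor over individual variables, and nonzero contributions require each appearing variable to have multiplicity $c\geq 2$. The induction hypothesis bounds $|\mathbb{E}[\widetilde{W}^c]| \leq C_c\, \varrho_k^{\lceil c/2\rceil}$ for $c<m$, while a Taylor expansion of $\widetilde{e}$ using~(\ref{BetaForm2}) gives $|\mathbb{E}[\widetilde{e}^{\,c}]| = O(V_{n,r}^{c/2})$, which is negligible since $V_{n,r} = O(1/n^2) \ll \varrho_k$.

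Isolating the diagonal contribution -- a single $\widetilde{W}_{i,j}$ carrying full multiplicity $c = m$ across all $m$ copies, which can only occur when $\alpha_i = m$ -- produces exactly a term $b^2\, \mu_k^{(m)}(\beta)$ on the right-hand side. Every other surviving term satisfies the multiplicity arithmetic $\sum_u c_u \geq \alpha_i$ with each $c_u \geq 2$, hence $\sum_u \lceil c_u/2\rceil \geq \lceil \alpha_i/2\rceil$, and multiplying across $i$ gives $\sum_i \lceil \alpha_i/2\rceil \geq \lceil m/2\rceil$; such terms are therefore uniformly $O(\varrho_k^{\lceil m/2\rceil})$. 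Collecting, one obtains the recursive estimate
\begin{align*}
\big|\mu_{k+1}^{(m)}(\beta_{n,r})\big| \,\leq\, b^{2-m}\big(1 + A_m \varrho_k(\beta_{n,r})\big)\, \big|\mu_k^{(m)}(\beta_{n,r})\big| \,+\, B_m\, \varrho_k(\beta_{n,r})^{\lceil m/2\rceil}
\end{align*}
for constants $A_m, B_m > 0$. Since $b^{2-m} < 1$ for $m \geq 3$, iterating from $\mu_0^{(m)} = 0$ (which holds since $W_0^\omega(\beta) = 1$) together with the monotonicity of $(\varrho_k)$ closes the induction.

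The main obstacle is the combinatorial bookkeeping of the double expansion: correctly identifying the diagonal contribution that propagates $\mu_k^{(m)}$ with coefficient $b^{2-m} < 1$ and separating it from the remaining terms, which are controlled by the multiplicity inequality. A subtle point is the treatment of odd exponents $c$ in $|\mathbb{E}[\widetilde{e}^{\,c}]|$, where the natural bound is $V_{n,r}^{c/2}$ rather than $V_{n,r}^{\lceil c/2\rceil}$; however, the comfortable gap $V_{n,r}/\varrho_k = O(\log n/n^2)$ absorbs this loss. The argument closely parallels its edge-disorder counterpart in~\cite{Clark2}.
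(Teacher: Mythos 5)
Your strategy is genuinely different from the paper's. The paper works with the \emph{uncentered} moments $\mathbb{E}\big[(W_k^{\omega}(\beta_{n,r}))^m\big]$, derives from~(\ref{PartHierSymmII}) a recursion of the form $\mu^{(m)}(k+1)=\frac{1}{b^{m-1}}(\mu^{(m)}(k))^b(\nu^{(m)})^{b-1}+\mathbf{P}_m(\cdots)$, and shows by strong induction in $m$ together with a bootstrap (a stopping index $k^*_{n,r,\epsilon}$ at which the moments would first exceed a threshold, shown never to be reached before $k=n-\lfloor\log n\rfloor$) that all uncentered moments converge to one uniformly in $k\le n-\lfloor\log n\rfloor$. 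Your centered-moment expansion, exploiting that terms with some $\alpha_i=1$ vanish and isolating the diagonal coefficient $b^{2-m}<1$, is a legitimate alternative and would even yield a quantitative rate consistent with Theorem~\ref{ThmMain}(II); it is closer in spirit to the arguments of~\cite{Clark2}.

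There is, however, a concrete gap: the claimed uniform bound $|\mu_k^{(m)}(\beta_{n,r})|\le C_m\,\varrho_k(\beta_{n,r})^{\lceil m/2\rceil}$ and the displayed recursive estimate are false for small $k$, precisely because of the vertex-disorder factors you flag as a ``subtle point.'' Take $m=3$, $k=0$: since $W_0^{\omega}=1$, all $\widetilde{W}$-factors vanish and the only surviving contributions to $\mu_1^{(3)}$ come from pure $\widetilde{e}$-terms, giving $\mu_1^{(3)}=\frac{(b-1)\tau}{b^2}\beta_{n,r}^3+\mathit{O}(\beta_{n,r}^4)\asymp n^{-3}$ when $\tau=\mathbb{E}[\omega^3]\neq 0$, whereas your recursion predicts $|\mu_1^{(3)}|\le b^{-1}|\mu_0^{(3)}|+B_3\varrho_0^2=0$; likewise $\varrho_1^2\asymp n^{-4}$, so $|\mu_1^{(3)}|\le C_3\varrho_1^2$ fails. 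The stated justification---that $V_{n,r}/\varrho_k=\mathit{O}(\log n/n^2)$---only holds near $k=n-\lfloor\log n\rfloor$ where $\varrho_k\asymp 1/\log n$; for $k=1$ one has $V_{n,r}/\varrho_1\to\frac{b}{b-1}$, and for $k=0$ the ratio is infinite. The third moment of the disorder genuinely enters at order $n^{-3}$ (this is exactly why the scaling~(\ref{BetaForm2}) carries the $\widehat{\kappa}_b\frac{\tau}{2}$ correction), so it cannot be absorbed into $\varrho_k^{\lceil m/2\rceil}$. The repair is routine: carry an additive error $\mathit{O}(V_{n,r}^{3/2})=\mathit{O}(n^{-3})$ in the inductive bound (i.e., prove $|\mu_k^{(m)}|\le C_m(\varrho_k^{\lceil m/2\rceil}+n^{-3})$); the geometric factor $b^{2-m}<1$ keeps the accumulated contribution $\mathit{O}(n^{-3})$, which still vanishes, so the lemma survives. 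You should also note that configurations in which two distinct $\widetilde{W}$-variables both carry multiplicity $m$ produce $(\mu_k^{(m)})^2$ rather than $\varrho_k|\mu_k^{(m)}|$, which forces the induction to run jointly in $k$ (using the hypothesis at step $k$ to linearize) rather than purely in $m$.
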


Let the  function $R:\R\rightarrow (0,\infty)$ be defined as in Proposition~\ref{PropVar}. The distribution $\mathbf{X}_r$ in the statement of the theorem below is related to the limiting law $\mathbf{W}_r$ from Theorem~\ref{ThmMain} through $\mathbf{W}_{r}=1+\mathbf{X}_r$. 
\begin{theorem}\label{ThmExist} There exists a unique family of  probability measures $\{\mathbf{X}_r\}_{r\in\R}$ supported on 
$[-1,\infty)$ having properties (I)--(III) below for each $r\in \R$. 
\begin{enumerate}[(I)]

\item The distribution $\mathbf{X}_r$ has mean zero and variance $R(r)$.

\item The distribution $\mathbf{X}_r$ has finite fourth moment that vanishes as $r\downarrow -\infty$.

\item If $\big\{X_r^{(i,j)}\big\}_{1\leq i,j\leq b}$ is a family of independent random variables with distribution $\mathbf{X}_r$, then \vspace{-.2cm}
$$\mathbf{X}_{r+1} \, \stackrel{d}{=} \, \frac{1}{b} \sum_{1 \leq i \leq b}\Bigg(\prod_{1\leq j \leq b} \left(1+X_r^{(i,j)}\right)\,-\,1\Bigg)\,. \vspace{-.2cm} $$ 
\end{enumerate}
Moreover, the integer moments of the distribution  $\mathbf{X}_r$ are finite for each $r\in \R$.
\end{theorem}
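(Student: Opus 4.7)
The plan is to realize $\mathbf{X}_r$ as the weak limit of $\mathcal{Q}^N\{X_h^{(N)}\}_{h\in E_N}$ for any regular sequence of arrays with parameter $r$ whose entries are supported in $[-1,\infty)$. For existence, I fix $r\in\R$ and take each $X_h^{(N)}$ to be an i.i.d.\ family of centered random variables in $[-1,\infty)$---for instance, $e^{\sigma_N g-\sigma_N^2/2}-1$ for a standard Gaussian $g$ and $\sigma_N^2=\log(1+\mathcal{V}_{N,r})$---whose variance $\mathcal{V}_{N,r}$ matches~(\ref{xAssump}). By Proposition~\ref{PropVar}, $M^N(\mathcal{V}_{N,r})\to R(r)$, while the mean of $\mathcal{Q}^N\{X_h^{(N)}\}_{h\in E_N}$ remains zero for every $N$ since $\mathcal{Q}$ preserves the centered property on independent arrays. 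Uniform-in-$N$ moment bounds (treated in the last paragraph) then yield tightness, and the limit satisfies property~(III) because $\mathcal{Q}^{N+1}=\mathcal{Q}\circ\mathcal{Q}^N$ and the variance profile $\mathcal{V}_{N+1,r+1}$ coincides with $\mathcal{V}_{N,r}$ up to $\mathit{o}(1/N^2)$, so the inner $\mathcal{Q}^N$-blocks weakly converge to i.i.d.\ copies of $\mathbf{X}_r$. The vanishing of the fourth moment as $r\downarrow -\infty$ in~(II) follows from the variance asymptotic $R(r)\sim\kappa^2/|r|$ together with a uniform control on the ratio of the fourth moment to the squared variance along the iteration.

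For uniqueness, suppose $\{\widetilde{\mathbf{X}}_r\}_{r\in\R}$ is a second family satisfying (I)--(III). Iterating (III) backward $N$ times expresses $\widetilde{\mathbf{X}}_r$ as the law of $\mathcal{Q}^N\{\widetilde{X}_h\}_{h\in E_N}$ for an i.i.d.\ array of copies of $\widetilde{\mathbf{X}}_{r-N}$. Property~(I) together with the expansion of $R(\cdot)$ in Proposition~\ref{PropVar} gives the variance asymptotic in Definition~\ref{DefRegular}(II), and (II) supplies the $m=4$ case of Definition~\ref{DefRegular}(III); the array is therefore minimally regular with parameter $r$, and the limit-theorem argument from the existence step identifies $\widetilde{\mathbf{X}}_r$ with $\mathbf{X}_r$. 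For finiteness of all integer moments of $\mathbf{X}_r$, I would proceed by induction on $m$: using (III), the $m^{th}$ centered moment of $\mathbf{X}_{r+1}$ expands as a polynomial of degree at most $mb$ in the moments of $\mathbf{X}_r$, so finiteness propagates forward in $r$, with the base case covered by the approximating sequence, whose $\mathcal{Q}^N$-images inherit bounded moments of every order uniformly in $N$.

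The technical crux is twofold: establishing uniform-in-$N$ bounds on the higher centered moments of $\mathcal{Q}^N\{X_h^{(N)}\}_{h\in E_N}$, and showing that the weak limit is intrinsic to $r$. For the moment bounds, expanding $w_h=\frac{1}{b}\sum_i\bigl(\prod_j(1+x_{h\times(i,j)})-1\bigr)$ writes the $m^{th}$ centered moment of $w_h$ as a polynomial in the centered moments of the inputs whose leading contribution is linear in the $m^{th}$ input moment (coefficient close to one), with remainder of order $\mathcal{V}^2$ times lower-order moments; iterating with $\mathcal{V}\sim\kappa^2/N$ produces summable increments, giving an $\mathit{O}(1)$ bound along the $N$ compositions. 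For intrinsicality of the limit, a Lindeberg-style coordinate-by-coordinate swap comparing a general regular array with a matched Gaussian-type array is the natural tool, with per-swap error controlled by the third and fourth moments of the entries; these vanish under Definition~\ref{DefRegular}, making the cumulative error $\mathit{o}(1)$ as $N\uparrow\infty$, and closing both existence and uniqueness simultaneously.
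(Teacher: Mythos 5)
A preliminary remark: this paper does not prove Theorem~\ref{ThmExist} at all---it is recalled verbatim from \cite[Theorem 5.16]{Clark2}, just as Theorem~\ref{ThmUnique} is \cite[Theorem 5.22]{Clark2}---so there is no in-paper proof to compare yours against; what you have written is an outline of the companion paper's work. Your architecture is the right one and matches how the two cited theorems interlock: construct $\mathbf{X}_r$ as the weak limit of $\mathcal{Q}^N\{X_h^{(N)}\}_{h\in E_N}$ for a concrete regular sequence, obtain property (III) from $\mathcal{Q}^{N+1}=\mathcal{Q}\circ\mathcal{Q}^N$ together with the cancellation $\mathcal{V}_{N+1,r+1}=\mathcal{V}_{N,r}+\mathit{o}(N^{-2})$, and reduce uniqueness to the universality statement by iterating (III) backwards. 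Your verification that $R(r-N)$ has the form required by Definition~\ref{DefRegular}(II) (via Proposition~\ref{PropVar}(II)) is correct, and property (II) of the hypothesized family does supply minimal regularity, so the backward-iteration step is sound once a universality theorem is in hand.

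Two steps, however, would fail as written. First, in the moment recursion the coefficient of the $m$-th input moment in the $m$-th moment of $\mathcal{Q}\{x_h\}$ is $b^{2-m}$, not ``close to one'': writing $w=\frac{1}{b}\sum_i\bigl(\prod_j(1+x_{ij})-1\bigr)$, the prefactor $b^{-m}$ meets $b$ diagonal choices of the branch index and $b$ choices of a common segment index, giving $b^{2-m}\,\mathbb{E}[x^m]$ plus terms in lower moments. This is harmless---for $m\geq 3$ the linear part is strictly contracting, which is precisely why the higher centered moments stay bounded and indeed vanish for regular inputs---but the mechanism is not the marginal one you describe. Second, and more seriously, the Lindeberg swap does not close as stated: there are $b^{2N}$ coordinates and the entries' third absolute moments are only $\mathit{o}(N^{-1/2})$ (by Cauchy--Schwarz from conditions (II) and (III) of Definition~\ref{DefRegular}), so a per-swap error ``controlled by the third and fourth moments of the entries'' sums to something divergent. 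The step is rescued only by the low-influence structure of the map: $1+\mathcal{Q}^N\{x_h\}=\frac{1}{|\Gamma_N|}\sum_{q\in\Gamma_N}\prod_{h\pmb{\in}q}(1+x_h)$ is affine in each coordinate and each edge lies on a fraction $b^{-N}$ of the paths, so the third-order Taylor error per swap carries an additional factor of order $b^{-3N}$ (times moments of the partition-function-like influence coefficients, which must themselves be controlled), giving a total of order $b^{-N}\mathit{o}(N^{-1/2})$. Without this input the universality step---which is the entire content of Theorem~\ref{ThmUnique} and the part your uniqueness and property-(III) arguments silently invoke---is missing; with it, you are in effect reproving both cited theorems rather than deducing Theorem~\ref{ThmExist} from material available in this paper.
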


The limiting distribution in the following theorem is that from Theorem~\ref{ThmExist}.

\begin{theorem}\label{ThmUnique} Let $\big(\{ X_h^{(N)} \}_{h\in  E_{N} }\big)_{N\in \mathbb{N}} $ be a minimally regular  sequence  of arrays of random variables with parameter  $r\in \R$.  Then the sequence of random variables $\big(\mathcal{Q}^N \{ X_h^{(N)} \}_{h\in  E_{N} } \big)_{N \in \N}$ converges in law to $\mathbf{X}_r$ as $N\uparrow \infty$.
\end{theorem}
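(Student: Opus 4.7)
The plan is to prove $Z_N := \mathcal{Q}^N\{X_h^{(N)}\}_{h\in E_N} \Longrightarrow \mathbf{X}_r$ in two stages: first reduce (via a Lindeberg-type swap) to the case of Gaussian inputs, then identify the Gaussian limit using the uniqueness provided by Theorem~\ref{ThmExist}.

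The starting point is variance control and tightness. Since $\mathcal{Q}$ preserves the mean-zero i.i.d.\ property of its inputs, $Z_N$ is centered with variance $M^N(\mathcal{V}_N)$, where $\mathcal{V}_N := \mathrm{Var}(X_h^{(N)})$. Condition (II) of Definition~\ref{DefRegular} together with Proposition~\ref{PropVar} gives $M^N(\mathcal{V}_N) \to R(r)$, so $\{Z_N\}_{N \in \mathbb{N}}$ is tight by Chebyshev.

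The main technical input is a Gaussian invariance step. Introduce an independent array $\{G_h^{(N)}\}_{h\in E_N}$ of mean-zero Gaussians with $\mathrm{Var}(G_h^{(N)}) = \mathcal{V}_N$, and set $\tilde Z_N := \mathcal{Q}^N\{G_h^{(N)}\}$. A Lindeberg-style swap, replacing one entry of the array at a time, would show $\mathbb{E}[\phi(Z_N) - \phi(\tilde Z_N)] \to 0$ for sufficiently smooth bounded $\phi$. The decisive structural observation is that $\mathcal{Q}$ is multi-affine: for each fixed $h \in E_N$, $Z_N$ can be written as $a_h + c_h X_h^{(N)}$, where $a_h, c_h$ depend only on the other entries. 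Taylor expansion of $\phi$ to second order cancels the first- and second-order contributions by the matching of the first two moments of $X_h^{(N)}$ and $G_h^{(N)}$, leaving a remainder bounded by $\|\phi'''\|_\infty \mathbb{E}[|c_h|^3](\mathbb{E}|X_h^{(N)}|^3 + \mathbb{E}|G_h^{(N)}|^3)$. A H\"older bound on the third absolute moment in terms of the fourth moment, together with condition (III) of Definition~\ref{DefRegular} for $m=4$, makes the per-swap error small; summed over the $b^{2N}$ swap positions, the total error is controlled by propagating $L^p$ bounds on the multi-affine coefficients $c_h$ through the hierarchical layers of $\mathcal{Q}$.

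With the Gaussian reduction in hand, the limit is identified hierarchically. For any fixed $k \in \mathbb{N}$, decompose $\tilde Z_N = \mathcal{Q}^k\{\tilde Y_h^{(N,k)}\}_{h\in E_k}$, where the $\tilde Y_h^{(N,k)}$ are i.i.d.\ random variables obtained by applying $\mathcal{Q}^{N-k}$ to disjoint Gaussian sub-arrays. Re-expanding $\mathcal{V}_N$ around $N-k$ shows that each such sub-array has variance of the form~(\ref{xAssump}) at scale $N-k$ with shifted parameter $r-k$, so by Proposition~\ref{PropVar}, $\mathrm{Var}(\tilde Y_h^{(N,k)}) = M^{N-k}(\mathcal{V}_N) \to R(r-k)$. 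Because the inputs are now Gaussian, all moments of $\tilde Y_h^{(N,k)}$ are computable via Wick's theorem as polynomials in $\mathcal{V}_N$, and the resulting recursion under $\mathcal{Q}$ mirrors precisely the moment recursion for $\mathbf{X}_{r-k}$ inherited from property (III) of Theorem~\ref{ThmExist}. A careful induction on moment order then yields $\tilde Y_h^{(N,k)} \Longrightarrow \mathbf{X}_{r-k}$ jointly across $h \in E_k$. The continuous mapping theorem applied to the polynomial $\mathcal{Q}^k$ gives $\tilde Z_N \Longrightarrow \mathcal{Q}^k\{\mathbf{X}_{r-k}^{(h)}\}_{h\in E_k}$, which equals $\mathbf{X}_r$ in distribution by $k$-fold iteration of property (III) of Theorem~\ref{ThmExist}. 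Combining with the Gaussian invariance yields $Z_N \Longrightarrow \mathbf{X}_r$.

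The central obstacle is the accounting in the Lindeberg swap. Since $\mathcal{V}_N$ is already of order $1/N$, the per-entry swap error must be sharper than $b^{-2N}$, and propagating tight $L^p$ bounds on the multi-affine coefficients $c_h$ through $N$ layers of $\mathcal{Q}$ without losing this summability is the delicate point; its resolution relies on the layer-by-layer variance estimates from Proposition~\ref{PropVar} and on the precise fourth-moment decay guaranteed by minimal regularity.
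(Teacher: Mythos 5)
First, a point of order: this paper does not prove Theorem~\ref{ThmUnique} at all --- it is imported verbatim from \cite[Theorem 5.22]{Clark2} --- so there is no in-paper argument to compare against, and your proposal must be judged on its own. Its decisive flaw is in the identification step, which is circular. After Gaussianizing, you still must show $\mathcal{Q}^{N}\{G_h^{(N)}\}\Rightarrow \mathbf{X}_r$, and your route --- decompose $\tilde Z_N=\mathcal{Q}^k\{\tilde Y_h^{(N,k)}\}$ at a \emph{fixed} level $k$ and assert $\tilde Y_h^{(N,k)}\Rightarrow\mathbf{X}_{r-k}$ --- simply restates the theorem for Gaussian inputs with parameter $r-k$: the $N-k\to\infty$ inner layers are where all the difficulty lives, and the fixed-$k$ split relocates none of it. ``Wick's theorem plus a careful induction on moment order'' is not a proof here: controlling the $m$-th moment of $\mathcal{Q}^{N-k}$ of near-degenerate inputs over divergently many iterations is, for each $m$, a dynamical problem of the same character as Proposition~\ref{PropVar} (which handles only $m=2$); you have no independent description of the higher moments of $\mathbf{X}_{r-k}$ to match against, since Theorem~\ref{ThmExist} supplies only the variance $R(r-k)$, a fourth-moment bound, and the recursion (III); and the method of moments additionally requires moment-determinacy of $\mathbf{X}_{r-k}$, which none of the quoted statements provide.

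The decomposition you introduce is the right tool pointed in the wrong direction. The intended use of ``minimal regularity'' is a Lindeberg swap at the \emph{top} $k$ levels: the $b^{2k}$ i.i.d.\ inner variables $Y_h^{(N,k)}$ have mean zero, variance converging to $R(r-k)$, and fourth moments that are small uniformly in $N$ once $k$ is large; one replaces them by i.i.d.\ copies of $\mathbf{X}_{r-k}$, whose variance is exactly $R(r-k)$ and whose fourth moment vanishes as $k\to\infty$ by Theorem~\ref{ThmExist}(I)--(II), with a second-order Taylor remainder controlled by fourth moments (this is precisely why Definition~\ref{DefRegular} demands only $m=4$); then $\mathcal{Q}^k\{\mathbf{X}_{r-k}\}\stackrel{d}{=}\mathbf{X}_r$ \emph{exactly} by (III), with no Gaussianization and no method of moments. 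By contrast, your bottom-level swap over $b^{2N}$ positions, while I believe feasible --- the coefficients $c_h$ are nonnegative normalized point-to-point partition functions whose moments factor over scales, giving $\mathbb{E}[c_h^3]=b^{-3N}\,\mathit{O}(N^{C})$ via estimates of the type in Lemma~\ref{LemmaHM}, so the total error carries a decisive factor $b^{-N}$ --- only trades the general problem for the Gaussian one, which is no easier. As written, the proposal identifies the correct structural ingredients but leaves the actual limit identification unproved.
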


\subsection{Proof of Theorem~\ref{ThmMain} }\label{SecSiteDisorder}

\begin{proof}
Let the i.i.d.\ array of random variables  $\big\{X_{h}^{\omega}(\beta_{n,r})\big\}_{h\in E_{\lfloor \log n\rfloor  }} $ be defined as in Proposition~\ref{PropReduce}. By Lemma~\ref{LemCond}, the $L^2$ distance between the generation-$n$  partition function $W_{n}^{\omega}(\beta_{n,r})$ and the random variable
$$\widetilde{W}_{n}^{\omega}(\beta_{n,r})\,:=\, \mathbb{E}\Big[ W_{n}^{\omega}(\beta_{n,r})  \,\Big|\,\mathcal{F}_{n}^{\lfloor \log n \rfloor}  \Big]  \,=\,1\,+\,\mathcal{Q}^{\lfloor \log n\rfloor}\big\{ X_h^{\omega}(\beta_{n,r}) \big\}_{h\in E_{\lfloor \log n\rfloor}  }  $$ 
vanishes with large $n$, where the second equality holds by Proposition~\ref{PropReduce}.  Thus, it suffices to prove that the random variables $\mathcal{Q}^{\lfloor \log n\rfloor}\big\{ X_h^{\omega}(\beta_{n,r}) \big\}_{h\in E_{\lfloor \log n\rfloor}  } $ converge in distribution to $\mathbf{X}_r =\mathbf{W}_{r}-1$  as $n \uparrow \infty$. Observe that the statements (I)--(III) below hold.
\begin{enumerate}[(I)]
\item The random variables in the array $\big\{ X_h^{\omega}(\beta_{n,r}) \big\}_{h\in E_{\lfloor \log n\rfloor}}$ are independent copies of $W_{n-\lfloor \log n\rfloor }^{\omega}(\beta_{n,r})-1$ as a consequence of the discussion preceding Proposition~\ref{PropReduce}.

\item By Lemma~\ref{LemMtilde}, the variance of the random variable $W_{n-\lfloor \log n\rfloor }^{\omega}(\beta_{n,r})$ has the large-$n$ asymptotics 
$$ \textup{Var}\Big( W_{n-\lfloor \log n\rfloor }^{\omega}(\beta_{n,r}) \Big)\,=\,\frac{\kappa^2}{\lfloor \log n \rfloor}\left(1+\,\frac{\eta \log \lfloor \log n\rfloor    }{ \lfloor \log n\rfloor } \,+\,\frac{ r  }{ \lfloor \log n\rfloor} \right) \,+\,\mathit{o}\left( \frac{1}{ \log^2 n  }  \right)  \,.    $$

\item For each $m\in \{4, 6,\ldots\}$,  the $m^{th}$ centered moment of $W_{n-\lfloor \log n\rfloor }^{\omega}(\beta_{n,r})$ vanishes as $n\uparrow \infty$ by Lemma~\ref{LemmaHM}.

\end{enumerate}
Statements (I)--(III) imply that the sequence in $n\in \mathbb{N}$ of edge-labeled arrays $\big\{ X_{h}^{\omega}(\beta_{n,r})\big\}_{h\in E_{\lfloor \log n\rfloor}}$
satisfies the  conditions (I)--(III) in Definition~\ref{DefRegular} with $N=\lfloor \log n\rfloor $.  Thus, by Theorem~\ref{ThmUnique}, the random variables $\mathbf{X}^{(n)}:= \mathcal{Q}^{\lfloor \log n\rfloor}\big\{ X_h^{\omega}(\beta_{n,r}) \big\}_{h\in E_{\lfloor \log n\rfloor}  } $ converge in distribution to $\mathbf{X}_r$ with large $n$.  Therefore, $W_{n}^{\omega}(\beta_{n,r})$ converges in distribution to $\mathbf{W}_r$ as $n\uparrow \infty$.
\end{proof}

\section{Proofs of the three lemmas}\label{SecThreeLemmas}
In this section, we provide the proofs of Lemmas~\ref{LemCond},~\ref{LemMtilde}, and~\ref{LemmaHM}. We begin with Lemma~\ref{LemMtilde},  because its application is used to show the other two lemmas. As previously mentioned, the analysis in the proof of Lemma~\ref{LemMtilde} improves on that of~\cite[Theorem 2.5]{US}.
\subsection{Proof of Lemma~\ref{LemMtilde}}\label{SecVarAnal}

For $k\in \mathbb{N}_{0}$ and $\beta>0$, recall that  $\varrho_{k}(\beta)$ denotes the variance of  $W^{\omega}_{k}(\beta)$ and that the sequence of variances  $\big({\varrho}_{k}(\beta)\big)_{k\in \mathbb{N}_0}$ satisfies the recursive equation~(\ref{RecEqVar}), where for $b=s$ the map $M_{V}:[0,\infty)\rightarrow [0,\infty)$ is defined by
\begin{align}\label{ReDefMHat}
   M_{V}(x) \,:=\,\frac{1}{b}\Big[ (1+x)^b\big(1+ V \big)^{b-1}    \,-\,1   \Big]  \hspace{.5cm} \text{ for } \hspace{.5cm} V\,:=\,\textup{Var}\big( \textup{exp}\big\{ \beta \omega -\lambda(\beta)\big\}  \big) \,.
\end{align}
The inverse temperature scaling~(\ref{BetaForm2}) results in the following  variance asymptotics as $n\uparrow \infty$:
\begin{align*}
 V_{n,r}\,:=\, \textup{Var}\big(  \textup{exp}\big\{\beta_{n,r}\omega-\lambda(\beta_{n,r})\big\}\big)\,=\, \frac{\widehat{\kappa}^2}{n^2}\bigg( 1   \,+\,\frac{2\eta\log n }{n}\,+\, \frac{2(r-\varsigma)}{n} \bigg)+\,\mathit{o}\Big( \frac{1}{n^3}  \Big)\,, 
 \end{align*}
 where, recall, $\widehat{\kappa}^2:=\frac{ \pi^2 b }{2(b-1)^2 }$, $\kappa^2:=\frac{ 2 }{b-1 }$, $\eta :=\frac{b+1}{3(b-1)}$, and $\varsigma := (\log\frac{\pi}{2}+2)\eta $.
It will be convenient to write $ V_{n,r}$ in the form $ V_{n,r}= \frac{ \widehat{\kappa}^2}{\mathbf{n}_{n,r}^2} = \frac{b\pi^2 \kappa^2}{(b-1)4\mathbf{n}_{n,r}^2}$ for  $\mathbf{n}_{n,r}:=  \frac{\pi \kappa}{2}\big(\frac{b}{b-1}\big)^{1/2}V_{n,r}^{-1/2}$, which has the large-$n$ asymptotics
\begin{align}\label{NDef}
 \mathbf{n}_{n,r}\,=\,n\,-\,\eta\log n \,-\,r\, +\,\varsigma  \,+\,\mathit{o}(1) \,. 
 \end{align}

\vspace{.1cm}

\begin{proof}[Proof of Lemma~\ref{LemMtilde}] We separate the proof into parts (A)--(H).\vspace{.15cm}

\noindent \textit{(A) An approximation for the variance map:} Since the variance $\varrho_k(\beta_{n,r}  )$ of $W_{k}^{\omega}(\beta_{n,r})$ satisfies the recursive equation~(\ref{RecEqVar}) in $k\in \mathbb{N}_{0}$, we have that
\begin{align}
\textup{Var}\Big( W_{n-\lfloor \log n\rfloor}^{\omega}(\beta_{n,r}) \Big)\,=\, M_{n,r}^{n-\lfloor\log n\rfloor }(0)\,,\nonumber 
\end{align}
where $M_{n,r}\equiv M_{V_{n,r}}$.  Let $\widetilde{M}_{n,r}:[0,\infty)\rightarrow [0,\infty)$ be defined through the following approximation of the expression for $M_{n,r}(x)$ in~(\ref{ReDefMHat}) around $(x,V_{n,r})=(0,0)$ that is third-order in $x$ and first-order in $V_{n,r}$:
\begin{align*}
        \widetilde{M}_{n,r}(x)\,:=\,&\,x\,+\,\frac{b-1}{2}x^2\,+\,\frac{(b-1)(b-2)}{6}x^3  \, +\,\frac{b-1}{b}V_{n,r}\big(1+bx\big)\\     \,=\,&\, x\,+\,\frac{x^2}{\kappa^2}\,+\,(1-\eta)\frac{x^3}{\kappa^4}\,+\,\frac{\pi^2 \kappa^2  }{4 \mathbf{n}_{n,r}^2 }\big(1+bx\big)  \,, \nonumber  
\end{align*}
where  $\mathbf{n}_{n,r}  $ is defined above~(\ref{NDef}). Note that the definition of $\widetilde{M}_{n,r}$ retains the lowest-order cross term $(b-1)V_{n,r} x$.  Define $\mathscr{E}(x, \mathbf{n}_{n,r}  ):= M_{n,r}(x)-\widetilde{M}_{n,r}(x) $, in other words,  the error of the approximation of $M_{n,r}$ by $\widetilde{M}_{n,r}$.   When $x\geq 0$, the error is nonnegative and has the following bound for some $\mathbf{c}>0$ and all $n\in \mathbb{N}$ and  $0\leq x \leq 1$:
\begin{align}\label{ErrorTerm}
\mathscr{E}(x, \mathbf{n}_{n,r}  )\,\leq\,\mathbf{c}\big( x^4 \,+\,\mathbf{n}_{n,r}^{-4} \big)\,.
\end{align}
The above inequality can be shown by expanding the expression~(\ref{ReDefMHat}) in $x$ and $V$, and then applying Young's inequality to the cross terms $x^iV_{n,r}^j$ with $i+j>2$, of which the lowest-order  is $x^2V_{n,r}\propto \frac{x^2}{\mathbf{n}_{n,r}^2}$. \vspace{.2cm}

\noindent \textit{(B) Transforming the variables:}
For $r\in \R$ and $n\in \mathbb{N}$, define the sequence $\big( \mathbf{r}_k^{(n,r)} \big)_{k\in \mathbb{N}_0}$ of  elements  in $[0,1)$ as 
\begin{align}\label{FormR}
\mathbf{r}_k^{(n,r)}\,:=\, \frac{2}{\pi}\tan^{-1}\bigg(\frac{2\mathbf{n}_{n,r}}{\pi \kappa^2}M_{n,r}^k(0)\bigg)\,,\hspace{.3cm} \text{so that we have  }\hspace{.3cm} \frac{\pi \kappa^2}{2}\tan\Big( \frac{\pi}{2}\mathbf{r}_k^{(n,r)}\Big)\,=\,\mathbf{n}_{n,r} M_{n,r}^k(0)\,.
\end{align}
Note that $\mathbf{r}_0^{(n,r)}=0$ since $M_{n,r}^0(0)=0$.  For notational neatness, we identify $\mathbf{r}_k^{(n,r)}\equiv \mathbf{r}_k$, i.e., suppress the dependence on the superscript variables.  The sequence $( \mathbf{r}_k )_{k\in \mathbb{N}_0}$ converges monotonically up to $1$ as $k\uparrow \infty$, and it will suffice for us to show that 
\begin{align}\label{Flattened}
1\,-\,\mathbf{r}_{n-\lfloor\log n\rfloor}\,=\,\frac{\lfloor \log n\rfloor -\eta\log\log n\,-\,r}{n}\,+\,\mathit{o}\Big(\frac{1}{n}\Big)\,.
\end{align}
To see the equivalence between~(\ref{Flattened}) and~(\ref{Unflat}), note that for large $n$---and thus for small $1-\mathbf{r}_{n-\lfloor\log n \rfloor }$ values---we  get the second equality below  through second-order Taylor expansions of $f_1(x)=\sin\big( \frac{\pi}{2}x \big)$ and $f_2(x)=\cos\big( \frac{\pi}{2}x \big)$ around $x=1$:
\begin{align*}
 \mathbf{n}_{n,r} M_{n,r}^{n-\lfloor \log n\rfloor  }(0)\,=\,\frac{\pi \kappa^2}{2}\tan\Big( \frac{\pi}{2}\mathbf{r}_{n-\lfloor\log n\rfloor}\Big) \,=\,  \frac{\kappa^2}{1-\mathbf{r}_{n-\lfloor\log n \rfloor }}\,+\,\mathit{O}\big( 1-\mathbf{r}_{n-\lfloor\log n\rfloor } \big) \,.
\end{align*} 
Finally, recall from~(\ref{NDef}) that $ \mathbf{n}_{n,r}=n+\mathit{O}( \log n  ) $ for large $n$.  Thus we only need to prove~(\ref{Flattened}). \vspace{.3cm}

\noindent \textit{(C) Rewriting the increments of $( \mathbf{r}_k )_{k\in \mathbb{N}_0}$ using Taylor's theorem:} By writing $M_{n,r}^{k+1}(0)=M_{n,r}\big(M_{n,r}^{k}(0)\big)  $ and splitting $M_{n,r}$ into a sum of  $\widetilde{M}_{n,r}$ and the error term $\mathscr{E}$, we get the equality
\begin{align*}
\mathbf{n}_{n,r} M_{n,r}^{k+1}(0)\,=\,\, &\mathbf{n}_{n,r} M_{n,r}^{k}(0) \,+\,\underbracket{\frac{1}{\kappa^2\mathbf{n}_{n,r} }\Big( \mathbf{n}_{n,r}M_{n,r}^{k}(0) \Big)^2}\,+\,\frac{1-\eta}{\kappa^4\mathbf{n}_{n,r}^2 }\Big(\mathbf{n}_{n,r} M_{n,r}^{k}(0) \Big)^3\,+\,\underbracket{\frac{\pi^2 \kappa^2  }{4 \mathbf{n}_{n,r} }}  \\ & \,+\,\frac{b\pi^2 \kappa^2  }{4 \mathbf{n}_{n,r}^2 }\Big(\mathbf{n}_{n,r} M_{n,r}^{k}(0)\Big)  \,+\, \mathbf{n}_{n,r}\mathscr{E}\left(M_{n,r}^{k}(0), \mathbf{n}_{n,r} \right)  \,.
\end{align*}
With~(\ref{FormR}), we can rewrite the equation  above using the variables $\mathbf{r}_{k}$ and $\mathbf{r}_{k+1}$ as below, where the under-bracketed expressions have combined to form the $\sec^2$ term.
\begin{align*}
\mathbf{r}_{k+1}\,=\,\,&\frac{2}{\pi}\tan^{-1}\Bigg(\tan\Big( \frac{\pi}{2}\mathbf{r}_k   \Big)   \,+\,\frac{\pi}{2\mathbf{n}_{n,r}}\sec^2\Big( \frac{\pi}{2}\mathbf{r}_k   \Big)\nonumber \\  &\text{}\hspace{.2cm}\,+\, \underbrace{\frac{\pi^2}{4\mathbf{n}_{n,r}^2}(1-\eta)\tan^3\Big( \frac{\pi}{2}\mathbf{r}_k   \Big)\,+\,\frac{b\pi^2\kappa^2}{4\mathbf{n}_{n,r}^2}\tan\Big( \frac{\pi}{2}\mathbf{r}_k   \Big)\,+\,\frac{2}{\pi \kappa^2} \mathbf{n}_{n,r} \mathscr{E}\bigg( \frac{\pi\kappa^2}{2 \mathbf{n}_{n,r}}\tan\Big(\frac{\pi}{2} \mathbf{r}_k \Big) , \mathbf{n}_{n,r}  \bigg)}_{\mathbf{(I)}}  \Bigg)\,.
\end{align*}
If $\mathbf{r}_{k}< 1-\frac{1}{\mathbf{n}_{n,r}}$,  Taylor's theorem applied to  the function $g(x)=\tan\big(\frac{\pi}{2}x\big) $ around the point  $x=\mathbf{r}_{k}$ with second-order error implies that there is an $\mathbf{r}_k^* \in [\mathbf{r}_{k},\mathbf{r}_{k}+\frac{1}{\mathbf{n}_{n,r}})$ for which
\begin{align}\label{TanTwo}
\mathbf{r}_{k}+\frac{1}{\mathbf{n}_{n,r}}\,=\,\frac{2}{\pi}\tan^{-1}\bigg(\tan\Big( \frac{\pi}{2}\mathbf{r}_k  \Big)   \,+\,\frac{\pi}{2\mathbf{n}_{n,r}}\sec^2\Big( \frac{\pi}{2}\mathbf{r}_k  \Big) \,+\, \underbrace{\frac{\pi^2}{4\mathbf{n}_{n,r}^2}\tan\Big( \frac{\pi}{2}\mathbf{r}_k^*   \Big) \sec^2\Big( \frac{\pi}{2}\mathbf{r}_k^*   \Big)}_{\mathbf{(II)}} \bigg)\,.
\end{align}
Define $\Delta_k$ as the difference between the expressions  $(\mathbf{II})$ and $(\mathbf{I})$, which can be written as
\begin{align*}
\Delta_k\,:=\,\,&\frac{\pi^2}{4\mathbf{n}_{n,r}^2}\bigg(\tan\Big( \frac{\pi}{2}\mathbf{r}_k^*   \Big) \sec^2\Big( \frac{\pi}{2}\mathbf{r}_k^*   \Big) \,-\,(1-\eta)\tan^3\Big( \frac{\pi}{2}\mathbf{r}_k  \Big) \bigg)\,-\,\frac{b\pi^2\kappa^2}{4\mathbf{n}_{n,r}^2}\tan\Big( \frac{\pi}{2}\mathbf{r}_k   \Big)\nonumber\\ &-\,\,\frac{2}{\pi \kappa^2} \mathbf{n}_{n,r} \mathscr{E}\bigg( \frac{\pi\kappa^2}{2 \mathbf{n}_{n,r}}\tan\Big(\frac{\pi}{2} \mathbf{r}_k \Big) , \mathbf{n}_{n,r}  \bigg)\,.
\end{align*}
By applying Taylor's theorem to the function $h(x)=\frac{2}{\pi}\tan^{-1}(x)$ around the point $x=\tan\big(\frac{\pi}{2}\mathbf{r}_{k+1}  \big)$, we have an $\mathbf{r}_k^{**}$ between $\mathbf{r}_{k+1}$ and $\mathbf{r}_k+\frac{1}{\mathbf{n}_{n,r}}$ such that
\begin{align}
\mathbf{r}_k\,+\,\frac{1}{\mathbf{n}_{n,r}}\,=\,\,& \mathbf{r}_{k+1}\,+\,\frac{2}{\pi}\Delta_k\frac{1}{ 1+\tan^2\big(\frac{\pi}{2}\mathbf{r}_{k+1}   \big)    }\,-\,\frac{2}{\pi}\Delta_k^2\frac{\tan\big(\frac{\pi}{2}\mathbf{r}_{k}^{**}   \big)}{\big( 1+\tan^2\big(\frac{\pi}{2}\mathbf{r}_{k}^{**}   \big)\big)^2    } \nonumber  \\
\,=\,\,&\mathbf{r}_{k+1}\,+\,\frac{2}{\pi}\Delta_k \cos^2\Big(\frac{\pi}{2}\mathbf{r}_{k+1}   \Big)    \,-\,\frac{2}{\pi}\Delta_k^2\sin\Big( \frac{\pi}{2}\mathbf{r}_{k}^{**}  \Big)\cos^3\Big( \frac{\pi}{2}\mathbf{r}_{k}^{**}  \Big)\,.\label{Tintin}
\end{align}

Finally, we can use that $\tan^2(x)=\sec^2(x)-1 $ to write  $\Delta_k=\Delta_k'+\Delta_k''+\Delta_k'''$, where
\begin{align*}
\Delta_k'\,:=\,\, &\frac{\pi^2}{4\mathbf{n}_{n,r}^2}\bigg(\tan\Big( \frac{\pi}{2}\mathbf{r}_k^*   \Big) \sec^2\Big( \frac{\pi}{2}\mathbf{r}_k^*   \Big)\,-\,(1-\eta)\tan\Big( \frac{\pi}{2}\mathbf{r}_k  \Big)\sec^2\Big( \frac{\pi}{2}\mathbf{r}_k  \Big) \bigg) \,,  \\
\Delta_k'' 
\,:=\,\, &\frac{\pi^2}{4\mathbf{n}_{n,r}^2} \big(1-\eta -b\kappa^2\big) \tan\Big( \frac{\pi}{2}\mathbf{r}_k  \Big)\,=\,-\frac{\pi^2 \eta}{\mathbf{n}_{n,r}^2  } \tan\Big( \frac{\pi}{2}\mathbf{r}_k  \Big) \,, \,\,\,\text{and}\\
\Delta_k''' 
\,:=\, &-\frac{2}{\pi \kappa^2} \mathbf{n}_{n,r} \mathscr{E}\bigg( \frac{\pi\kappa^2}{2 \mathbf{n}_{n,r}}\tan\Big(\frac{\pi}{2} \mathbf{r}_k \Big) , \mathbf{n}_{n,r}  \bigg) \,.
\end{align*}

\vspace{.1cm}

\noindent \textit{(D) Bounds for the various terms in~(\ref{Tintin}):}  The inequalities below hold for some $C>0$ and all $k\in \mathbb{N}_0$ and $n\in \mathbb{N}$ such that $1-\mathbf{r}_{k}\geq \frac{\log n}{2 n } > \frac{1}{ \mathbf{n}_{n,r}}  $.\footnote{The lower bound of $1-\mathbf{r}_{k}$ by $\frac{1}{ \mathbf{n}_{n,r} }$ ensures that $\mathbf{r}_{k}^*$ is well-defined by~(\ref{TanTwo}). When $n$ is sufficiently large, $\frac{\log n}{2 n } > \frac{1} {\mathbf{n}_{n,r}} $ holds as a trivial consequence of~(\ref{NDef}).}  
\begin{enumerate}[(i)]
 \item $  0\,\leq \,\big|\Delta_k'''\big| \,\leq \, \frac{C}{ n^3(1-\mathbf{r}_k)^4 }  $

\item $|\Delta_k| \,\leq \,\frac{ C  }{ n^2(1-\mathbf{r}_k)^3  }$

\item $\big| \mathbf{r}_{k} +\frac{1}{\mathbf{n}_{n,r}} - \mathbf{r}_{k+1}\big| \,\leq \,\frac{ C  }{ n^2(1-\mathbf{r}_k)  } $

\item $\Big| \frac{2}{\pi}\Delta_k' \cos^2\big(\frac{\pi}{2}\mathbf{r}_{k+1}   \big) \,    +\,\frac{\eta}{ n }\Big(\log\big( \cos\big(\frac{\pi}{2}\mathbf{r}_{k+1}\big) \big)\,-\,\log\big( \cos\big(\frac{\pi}{2}\mathbf{r}_{k}\big)  \big)    \Big)     \Big| \,\leq \,\frac{ C  }{ n^3(1-\mathbf{r}_k)^2  }$

\item $\left| \frac{2}{\pi}\Delta_k'' \cos^2\big(\frac{\pi}{2}\mathbf{r}_{k+1}   \big)  \,   + \,\frac{ 2 \eta}{n  }\Big( \sin^2\big(\frac{\pi}{2}\mathbf{r}_{k+1}\big) \,-\, \sin^2\big(\frac{\pi}{2}\mathbf{r}_{k}\big)      \Big)   \right| \,\leq \,\frac{ C  }{ n^3(1-\mathbf{r}_k)^2  }$

\item $\Big|\Delta_k''' \cos^2\big(\frac{\pi}{2}\mathbf{r}_{k+1}   \big)  \Big| \,\leq \,\frac{ C  }{ n^3(1-\mathbf{r}_k)^2  }$

\item $\Big|\Delta_k^2\sin\big( \frac{\pi}{2}\mathbf{r}_{k}^{**}  \big)\cos^3\big( \frac{\pi}{2}\mathbf{r}_{k}^{**} \big) \Big| \,\leq \,\frac{ C  }{ n^3(1-\mathbf{r}_k)^2  }$

\end{enumerate}
The inequalities  (iv)--(vii) help us approximate the difference $\mathbf{r}_{k} +\frac{1}{\mathbf{n}_{n,r}} - \mathbf{r}_{k+1}$ within~(\ref{PrePreTib}) below.  More refined inequalities are possible for (v) and (vii). However,  bounds by a multiple of $\frac{ 1 }{ n^3(1-\mathbf{r}_k)^2  }$ are adequate for our purpose; see the observation~(\ref{4Sum}), which is applied in part (F).  The bound (i) follows from~(\ref{ErrorTerm}), the asymptotic $\mathbf{n}_{n,r}\sim n$ for $n\gg 1 $, and the estimates below for $0\leq 1-x\ll 1$:
\begin{align*}
\cos\Big(\frac{\pi}{2}x   \Big) \,=\, \frac{\pi}{2}(1-x)\,+ \,\mathit{O}\big((1-x)^3\big)  \hspace{1cm}\text{and}\hspace{1cm}   \sin\Big(\frac{\pi}{2}x   \Big)\, =\, 1\,+ \,\mathit{O}\big((1-x)^2\big)\,  .
\end{align*}

To get (ii), we  bound $|\Delta_k'|$, 
$|\Delta_k''|$, and $|\Delta_k'''| $ individually.  The bound in (i) for $\Delta_k''' $ is stronger than needed, and we can bound $|\Delta_k''|$ by a multiple of $\frac{1}{n^2(1-\mathbf{r}_k)}$ since $\mathbf{n}_{n,r}\sim n$ when $n\gg 1$ and $\tan(x)$ is bounded by a multiple of $\frac{1}{\frac{\pi}{2}-x}$ for $x\in [0,\frac{\pi}{2})$.  For $|\Delta_k'|$, we observe that the inequality  $\mathbf{r}_k^*-\mathbf{r}_k < \frac{1}{\mathbf{n}_{n,r}} $ and~(\ref{NDef}) imply that 
\begin{align*}
\tan\Big( \frac{\pi}{2}\mathbf{r}_k^*   \Big) \sec^2\Big( \frac{\pi}{2}\mathbf{r}_k^*   \Big)\,=\,\tan\Big( \frac{\pi}{2}\mathbf{r}_k   \Big) \sec^2\Big( \frac{\pi}{2}\mathbf{r}_k   \Big)\,+\, \mathit{O}\bigg(\frac{1}{n(1-\mathbf{r}_k)^4   }\bigg)\,. 
\end{align*} 
By using $ \mathit{O}\big(\frac{1}{n(1-\mathbf{r}_k)^4   }\big)$ in the  above, we  mean that the difference between the terms $\tan\big( \frac{\pi}{2}\mathbf{r}_k^*   \big) \sec^2\big( \frac{\pi}{2}\mathbf{r}_k^*   \big)$ and $\tan\big( \frac{\pi}{2}\mathbf{r}_k   \big) \sec^2\big( \frac{\pi}{2}\mathbf{r}_k   \big)$ is bounded by a constant multiple of $\frac{1}{n(1-\mathbf{r}_k)^4   }$ for all large $n$ and all $k\in \mathbb{N}_0$ with $1-\mathbf{r}_k\geq \frac{\log n  }{2n }$.

The inequality (iii) follows from (ii) and~(\ref{Tintin}). In particular, the factor  $\cos^2\big(\frac{\pi}{2}\mathbf{r}_{k+1}   \big) $ in (iv)--(vii) has the form  $\cos^2\big(\frac{\pi}{2}\mathbf{r}_{k}   \big)+\mathit{O}\big( \frac{1-\mathbf{r}_k  }{n } \big)  $. The bounds (iv)--(vii) follow from   basic calculus estimates.\vspace{.3cm}

\noindent \textit{(E) A consequence of (iii):}  Before going to 
the estimates in part (F) below, we will point out an easy consequence of the bound (iii) in part (D).  If $\big(\ell(n)\big)_{n\in\N}$ is a sequence in $\mathbb{N}_0$ satisfying $1-\mathbf{r}_{\ell(n)}\geq \frac{\log n}{2n}> \frac{1}{\mathbf{n}_{n,r}} $, then the spacing between  the terms in the sequence $(\mathbf{r}_{k})_{0\leq k \leq \ell(n)}$ has the large-$n$ form
$$  \mathbf{r}_{k+1}\,-\, \mathbf{r}_{k}\,=\,\frac{1}{\mathbf{n}_{n,r}} \,+\, \mathit{O}\bigg(\frac{1}{n\log n}  \bigg)\,=\,\frac{1}{n} \,+\, \mathit{O}\bigg(\frac{1}{n\log n}  \bigg)\,,$$
where the errors $\mathit{O}\big(\frac{1}{n\log n}\big)$ are uniformly bounded by a multiple of $\frac{1}{n\log n}$ for all $n\gg 1$ and all $0\leq k<\ell(n)$.
 The second equality above holds since $\mathbf{n}_{n,r}=n+\mathit{O}\big(\log n\big)$. A Riemann sum approximation thus gives us
\begin{align}\label{4Sum}
\sum_{0 \leq k < \ell(n)}\frac{ 1  }{ n^3(1-\mathbf{r}_k)^2  }\,=\,\frac{1+\mathit{o}(1)}{n^2}\int_0^{\mathbf{r}_{\ell(n)}}\frac{1}{(1-x)^2}dx\,=\,\frac{1+\mathit{o}(1)}{n^2} \bigg( \frac{1}{1-\mathbf{r}_{\ell(n)}}  -1\bigg)\,=\,\mathit{o}\Big( \frac{1}{ n } \Big)\,.
\end{align}

\vspace{.2cm}

\noindent \textit{(F) Applying the bounds in (D) to a key telescoping sum:} Assume that $\big(\ell(n)\big)_{n\in\N}$ is a sequence in $\mathbb{N}_0$ satisfying $\ell(n)\leq n$ and   $1-\mathbf{r}_{\ell(n)}\geq \frac{\log n}{2n}> \frac{1}{ \mathbf{n}_{n,r}} $ for all $n$.  Then~(\ref{4Sum}) and the inequalities in part (D) are applicable.  Since $\mathbf{r}_0=0$, the first equality below results from a telescoping sum.
\begin{align}
1\,-\,\mathbf{r}_{\ell(n)}\,=\,&\,\bigg(1\,-\,\frac{\ell(n)    }{ \mathbf{n}_{n,r}  }\bigg)\,+\,\sum_{0 \leq k < \ell(n)}\bigg(\mathbf{r}_{k}+\frac{1}{\mathbf{n}_{n,r}}-\mathbf{r}_{k+1}   \bigg)\nonumber \\
\,=\,&\,\bigg(1\,-\,\frac{\ell(n)    }{ \mathbf{n}_{n,r}  }\bigg)\,+\,\frac{2}{\pi}\sum_{0 \leq k < \ell(n)}\Delta_{k} \cos^2\Big( \frac{\pi}{2}\mathbf{r}_{k+1}  \Big)\,-\,\frac{2}{\pi}\sum_{0 \leq k < \ell(n)}\Delta_{k}^2 \sin\Big( \frac{\pi}{2}\mathbf{r}_{k}^{**}  \Big)\cos^3\Big( \frac{\pi}{2}\mathbf{r}_{k}^{**}  \Big) \nonumber  \\
\,=\,&\,\bigg(1\,-\,\frac{\ell(n)    }{ \mathbf{n}_{n,r}  }\bigg)\,+\,\frac{2}{\pi}\sum_{0 \leq k < \ell(n)}\Delta_{k}' \cos^2\Big( \frac{\pi}{2}\mathbf{r}_{k+1}  \Big)\,+\,\frac{2}{\pi}\sum_{0 \leq k < \ell(n)}\Delta_{k}'' \cos^2\Big( \frac{\pi}{2}\mathbf{r}_{k+1}  \Big)\,+\,\mathit{o}\Big(\frac{1}{n}\Big)   \label{PrePreTib}
\end{align}
The second equality uses the identity~(\ref{Tintin}) to rewrite the difference between $\mathbf{r}_{k}+\frac{1}{\mathbf{n}_{n,r}}$ and $\mathbf{r}_{k+1} $.  In the third equality, we substituted $\Delta_{k}=\Delta_{k}'+\Delta_{k}''+\Delta_{k}'''$ and applied the  bounds in (vi) and (vii) of part (D) along with the observation~(\ref{4Sum}). Furthermore, applying (iv) and (v) of part (D) with~(\ref{4Sum}) again yields that 
\begin{align}
1\,-\,\mathbf{r}_{\ell(n)}\,=\,&\,\bigg(1\,-\,\frac{\ell(n)    }{ \mathbf{n}_{n,r}  }\bigg)\,-\,\frac{\eta}{n}\sum_{0 \leq k < \ell(n)}\,\Big[\log\Big( \cos\Big(\frac{\pi}{2}\mathbf{r}_{k+1}\Big) \Big)\,-\,\log\Big( \cos\Big(\frac{\pi}{2}\mathbf{r}_{k}\Big)  \Big)    \Big]   \nonumber   \\  &\,-\,\frac{2\eta}{n  }\sum_{0 \leq k < \ell(n)}\Big[ \sin^2\Big(\frac{\pi}{2}\mathbf{r}_{k+1}\Big) \,-\, \sin^2\Big(\frac{\pi}{2}\mathbf{r}_{k}\Big)      \Big]  \,+\,\mathit{o}\Big(  \frac{ 1  }{ n} \Big) \nonumber \\
\,=\,&\,\bigg(1\,-\,\frac{\ell(n)    }{ \mathbf{n}_{n,r}  }\bigg)\,-\,\frac{\eta}{ n }\log\Big(\cos\Big(\frac{\pi}{2}\mathbf{r}_{\ell(n)}\Big)\Big)\,-\,\frac{  2\eta}{n  } \sin^2\Big(\frac{\pi}{2} \mathbf{r}_{\ell(n)}\Big)   \,+\,\mathit{o}\Big(  \frac{ 1  }{ n} \Big) \,, \label{PreTib}
\end{align}
 the second equality resulting from telescoping sums and that $\mathbf{r}_0=0$.

 Recall that $\varsigma:= (\log\frac{\pi}{2}+2)\eta  $. By adding and subtracting terms, we can rewrite the equality~(\ref{PreTib}) in the form
\begin{align}
1\,-\,\mathbf{r}_{\ell(n)}\, =\,& \frac{ n-\lfloor \log n\rfloor -\ell(n)   }{ \mathbf{n}_{n,r}  }\,+\,\underbracket{\frac{\lfloor \log n\rfloor + \mathbf{n}_{n,r} -n}{ \mathbf{n}_{n,r}  }\,+\,\frac{\eta\log n-\eta\log\log n}{n} \,-\, \frac{  \varsigma}{n  }}  \,+\, \mathcal{R}_{\ell(n)}^{(n)} \,, \label{Tibb}
\end{align}
where for the error $\mathit{o}\big(\frac{1}{n}\big)$ from~(\ref{PreTib})  we define
\begin{align}\label{RemTerm}
\mathcal{R}_{k}^{(n)}\,:=\,\frac{\eta}{n}\log\bigg(\frac{\frac{\pi}{2}\log n}{n\cos\big(\frac{\pi}{2}r_{k}\big)}   \bigg) \,-\,\frac{  2\eta}{n  } \Big(\sin^2\Big(\frac{\pi}{2} r_{ k}\Big)  \,-\,1\Big) \,+\,\mathit{o}\Big(\frac{1}{n}\Big)  \,.
\end{align}
The difference between the bracketed expression in~(\ref{Tibb}) and the bracketed term below is $\mathit{o}\big(\frac{1}{n}\big) $ since $\mathbf{n}_{n,r}=n-\eta\log n-r+\varsigma +\mathit{o}(1)$ for $n\gg 1$
\begin{align}
1\,-\,\mathbf{r}_{\ell(n)}\, =\,\frac{ n-\lfloor \log n\rfloor -\ell(n)    }{ \mathbf{n}_{n,r}  }\,+\,\underbracket{\frac{ \lfloor \log n\rfloor \, -\, \eta\log \log n \,-\,r}{ n  }} \, + \, \mathcal{R}_{\ell(n)}^{(n)} \,, \label{Tib}
\end{align}
in which we have absorbed the error $\mathit{o}\big(\frac{1}{n}\big) $ of the approximation into $\mathcal{R}_{\ell(n)}^{(n)}$.

\vspace{.3cm}

\noindent \textit{(G) How we can make use of~(\ref{Tib}):}  We will temporarily assume that   $1-\mathbf{r}_{n-\lfloor \log n\rfloor}\geq \frac{\log n}{2n}$ holds for sufficiently large $n$ and show that the asymptotics~(\ref{Flattened}) follows.   
If $1-\mathbf{r}_{n-\lfloor \log n\rfloor}\geq \frac{\log n}{2n}$, then the equality~(\ref{Tib}) holds with $\ell(n)= n-\lfloor \log n\rfloor$, which gives us 
\begin{align}\label{OneMinusR}
1\,-\,\mathbf{r}_{n-\lfloor \log n\rfloor}\,=\,
\frac{ \lfloor \log n\rfloor \, -\, \eta\log \log n \,-\,r}{ n  }\,+\,\mathcal{R}_{n-\lfloor \log n\rfloor}^{(n)} \,.
\end{align}
Note that we can establish~(\ref{Flattened}) by showing  that $\mathcal{R}_{n-\lfloor \log n\rfloor}^{(n)}$ is $\mathit{o}\big(\frac{1}{n}
\big)$ when $n\gg 1$.  Define $c:=\sup_{x\in [0,\pi/2)}\frac{ \frac{\pi}{2}-x  }{\cos(x)  }$.  Since $1-\mathbf{r}_{n-\lfloor \log n\rfloor}\geq \frac{\log n}{2n}$, we can get an upper bound for $\mathcal{R}_{n-\lfloor \log n\rfloor}^{(n)}$ 
by
\begin{align*}
\mathcal{R}_{n-\lfloor \log n\rfloor}^{(n)} \,\leq \,\,&\frac{\eta}{n}\log\bigg(\frac{c\log n}{n\big(1-r_{n-\lfloor \log n\rfloor}\big)}   \bigg) \,+\,\frac{  4\eta}{n  }  \,+\,\mathit{o}\Big(\frac{1}{n}\Big) \\ \,\leq \,\,&\frac{\eta}{n}\log (2c) \,+\,\frac{  4\eta}{n  }  \,+\,\mathit{o}\Big(\frac{1}{n}\Big)\,=\,\mathit{O}\Big(\frac{1}{n}\Big)\,.
\end{align*}
Thus, using~(\ref{OneMinusR}) we can bound $1-\mathbf{r}_{n-\lfloor \log n\rfloor}$ from above and below by constant multiples of $\frac{\log n}{n}$ for $n\gg 1$:
\begin{align*}
\frac{\log n}{2n}\,\leq \,1\,-\,\mathbf{r}_{n-\lfloor \log n\rfloor}\,\leq \,
&\frac{ \lfloor \log n\rfloor \, -\, \eta\log \log n }{ n  }\,+\,\mathit{O}\Big(\frac{1}{n}\Big)\,.
\end{align*}
  It follows from~(\ref{RemTerm}) that $\mathcal{R}_{n-\lfloor \log n\rfloor}^{(n)} $ is $\mathit{O}\big(\frac{1}{n}\big)$. We can thus conclude from~(\ref{OneMinusR}) that $1-\mathbf{r}_{n-\lfloor \log n\rfloor}= \frac{\log n}{n}\big(1+\mathit{o}(1)\big)$.  Plugging this asymptotics for $1-\mathbf{r}_{n-\lfloor \log n\rfloor}$ into~(\ref{RemTerm}) once more, we can conclude that  $\mathcal{R}_{n-\lfloor \log n\rfloor}^{(n)}=\mathit{o}\big(\frac{1}{n}\big)$. Hence~(\ref{Flattened}) holds under the assumption that $1-\mathbf{r}_{n-\lfloor \log n\rfloor}\geq \frac{\log n}{2n}$.

\vspace{.3cm}

\noindent \textit{(H) Establishing the validity of~(\ref{Tib}) when $\ell(n)=n-\lfloor \log n\rfloor$:} It remains to show that $1-\mathbf{r}_{n-\lfloor \log n\rfloor}\geq \frac{\log n}{2n}$ holds for large enough $n$. Let $\ell^*(n)$ be the smallest value in $ \mathbb{N}$ such that
\begin{align}\label{Lower}
1-\mathbf{r}_{\ell^*(n)}\,\leq \,\frac{3 \log n }{4n}\,.
\end{align}
Since $1-\mathbf{r}_{\ell^*(n)-1}> \frac{3 \log n }{4n}$ and $\mathbf{r}_{\ell^*(n)}-\mathbf{r}_{\ell^*(n)-1}=\frac{1}{n}+\mathit{o}(\frac{1}{n})$ by (iii) in part (D), we have  the inequality $1-\mathbf{r}_{\ell^*(n)}\geq \frac{ \log n }{2n}$ for large enough $n$. Thus~(\ref{Tib}) will hold with $\ell(n):=\ell^*(n)$ when $n\gg 1$, which gives the equality below:
\begin{align}\label{Ting}
\frac{3 \log n }{4n}\,\geq\,1-\mathbf{r}_{\ell^*(n)}\, =\, \frac{ n-\lfloor \log n\rfloor -\ell^*(n)  }{ \mathbf{n}_{n,r}  }\,+\, \frac{ \lfloor \log n\rfloor \, -\, \eta\log \log n \,-\,r}{ n  }\,+\,\mathcal{R}^{(n)}_{\ell^*(n)}\,.
\end{align}
Applying the inequality $\cos(x)\leq \frac{\pi}{2}-x $ for $x\in [0,\frac{\pi}{2})$ in~(\ref{RemTerm}),  we get that
\begin{align*}
   \mathcal{R}^{(n)}_{\ell^*(n)}\,\geq\, \frac{\eta}{n}\log\bigg(\frac{\log n}{n(1-r_{\ell^*(n)})}   \bigg) \,+\,\mathit{o}\Big(\frac{1}{n}\Big)\,\geq\, \frac{\eta}{n}\log\Big(\frac{4}{3}   \Big) \,+\,\mathit{o}\Big(\frac{1}{n}\Big) \,=\,\mathit{O}\Big(\frac{1}{n}\Big)\,,
\end{align*}
where the second inequality uses~(\ref{Lower}). The above lower bound for $\mathcal{R}^{(n)}_{\ell^*(n)}$ combined with~(\ref{Ting}) yields
\begin{align}\label{Chub}
\frac{3 \log n }{4n}\, \geq \,& \underbracket{\frac{ n-\lfloor \log n\rfloor -\ell^*(n)    }{ \mathbf{n}_{n,r}  }}_{\text{must be}\,\, <\,0\,\,\text{for large $n$}}\,+\, \underbrace{\frac{ \lfloor \log n\rfloor \, -\, \eta\log \log n }{ n  }\,+\,\mathit{O}\Big(\frac{1}{n}\Big)}_{ > \,\,\frac{3 \log n }{4n} \text{ for large $n$}   }\,.
\end{align}
The first expression on the right side of~(\ref{Chub}) must be negative when $n\gg 1$, and  therefore $\ell^*(n)> n-\lfloor \log n\rfloor$. It follows that $1-\mathbf{r}_{n-\lfloor \log n\rfloor}\geq \frac{\log n}{2n}$ holds for large $n$.
\end{proof}

\subsection{Proof of Lemma~\ref{LemCond}} \label{SecLemCond}
Since the random variables $\mathbb{E}\big[W_n^{\omega}( \beta_{n,r} )\,\big|\, \mathcal{F}_{n}^{\lfloor \log n\rfloor} \big]$  and $W_n^{\omega}( \beta_{n,r} )-\mathbb{E}\big[W_n^{\omega}( \beta_{n,r} )\,\big|\, \mathcal{F}_{n}^{\lfloor \log n\rfloor} \big]  $ are uncorrelated, the square of the $L^2$ distance between $W_n^{\omega}( \beta_{n,r} )$ and $\mathbb{E}\big[W_n^{\omega}\big( \beta_{n,r} \big)\,\big|\, \mathcal{F}_{n}^{\lfloor \log n\rfloor}\big]$ is equal to 
\begin{align*}
  \textup{Var}\Big(W_n^{\omega}( \beta_{n,r} )\Big)\,-\,\textup{Var}\Big( \mathbb{E}\Big[W_n^{\omega}( \beta_{n,r} )\,\Big|\, \mathcal{F}_{n}^{\lfloor \log n\rfloor}\Big]\Big) \,=\,&\, M_{n,r}^{n }(0)\,-\,\textup{Var}\left( \mathcal{Q}^{\lfloor \log n\rfloor}\big\{ X_h^{n,r} \big\}_{h\in E_{\lfloor \log n\rfloor }}\right)\\ 
=\, &\, M_{n,r}^{n }(0)\,-\, M^{\lfloor \log n\rfloor }\left(M_{n,r}^{n-\lfloor \log n\rfloor }(0)\right)\,,
  \end{align*}
where the random variables  $X_h^{n,r}$  are independent copies of  $W_{n-\lfloor \log n\rfloor  }^{\omega}( \beta_{n,r} )$.
The equalities above use~(\ref{RecEqVar}), Proposition~\ref{PropReduce}, and the discussion at the beginning of Section~\ref{SecVarQMap}.
It follows that Lemma~\ref{LemCond} is a corollary of the lemma below.

\begin{lemma}\label{LemmaMMaps} The difference between $M_{n,r}^{n }(0)$ and $M^{\lfloor \log n\rfloor }\big(M_{n,r}^{n-\lfloor \log n\rfloor }(0)\big)$ vanishes as $n\uparrow \infty$.\end{lemma}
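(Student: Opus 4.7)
The plan is to show $M_{n,r}^N(\mathcal{V}) - M^N(\mathcal{V}) \to 0$ directly, where $N := \lfloor \log n\rfloor$ and $\mathcal{V} := M_{n,r}^{n-N}(0)$; Lemma~\ref{LemMtilde} guarantees $\mathcal{V} = \kappa^2/N\,(1+o(1))$. Setting $u_k := M_{n,r}^k(\mathcal{V})$ and $v_k := M^k(\mathcal{V})$, I would telescope through the mean value theorem,
\begin{align*}
u_N-v_N\,=\,\sum_{k=0}^{N-1}\Big[M^{N-k-1}\big(M_{n,r}(u_k)\big)\,-\,M^{N-k-1}\big(M(u_k)\big)\Big]\,=\,\sum_{k=0}^{N-1}(M^{N-k-1})'(\xi_k)\,\big(M_{n,r}(u_k)-M(u_k)\big),
\end{align*}
for some $\xi_k\in [M(u_k),\,M_{n,r}(u_k)]$, and bound the one-step perturbation using the identity $M_{n,r}(x)-M(x) = \frac{1}{b}(1+x)^b[(1+V_{n,r})^{b-1}-1]$, which together with $V_{n,r} = \widehat{\kappa}^2/n^2+o(1/n^2)$ gives $|M_{n,r}(u_k)-M(u_k)| \leq C V_{n,r}(1+u_k)^b$.

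The delicate step is a sharp upper bound on $(M^{N-k-1})'(\xi_k) = \prod_{j=0}^{N-k-2}(1+M^j(\xi_k))^{b-1}$. The naive Lipschitz estimate $M'(x)\leq (1+R(r))^{b-1}$ produces an exponential factor of the form $n^{(b-1)\log(1+R(r))}$, which dominates $V_{n,r}^{-1}\sim n^2$ for all but the smallest values of $r$. Instead, I would exploit the sharp polynomial decay $v_m\leq C_r/(N-m+1)$ for $m\leq N$, obtained by inverting the low-order recursion $v_{m+1}=v_m+v_m^2/\kappa^2+O(v_m^3)$ to get $1/v_m = (N-m)/\kappa^2+O(\log N)$ whenever $N-m\gg 1$, with the residual regime $N-m=O(1)$ handled by the uniform bound $v_N \leq R(r)+1$ (a consequence of Proposition~\ref{PropVar}). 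Under this bound the product satisfies
\begin{align*}
\prod_{m=k+1}^{N-1}(1+v_m)^{b-1}\,\leq\,\prod_{l=2}^{N-k}\bigg(1+\frac{C_r}{l}\bigg)^{b-1}\,\leq\,C(N-k)^{(b-1)C_r}\,,
\end{align*}
which is polynomial, not exponential, in $N-k$; an analogous analysis of the perturbed recursion for $u_m$ (with $V_{n,r}=O(1/n^2)$ as a negligible forcing) establishes $(1+u_k)^b \leq C''$ uniformly. Summing then yields $|u_N-v_N|\lesssim V_{n,r}\cdot (\log n)^{(b-1)C_r+1}\to 0$.

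The main obstacle is precisely this sharp polynomial control of $(M^{N-k-1})'$: the marginal fixed point at $0$ in the critical $b=s$ case means that the iterates spend $N-O(1)$ steps in the weakly attracting regime $v_m\sim\kappa^2/(N-m)$ and only approach $R(r)$ in the last $O(1)$ steps, so the total Jacobian grows only polynomially in $N$. Without this fine observation the estimate would fail for large $r$, since the $V_{n,r}\sim 1/n^2$ decay arising from the $\beta_{n,r}\sim\widehat{\kappa}/n$ scaling provides only an $n^2$ budget against the $N=\lfloor\log n\rfloor$ compositions. Finally, a short bootstrap using the already-derived bound $|u_k - v_k|=o(1)$ justifies replacing $\xi_k$ by $v_{k+1}$ in the derivative estimate up to a multiplicative factor of $1+o(1)$, closing the argument.
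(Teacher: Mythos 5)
Your proposal is correct and, at the strategic level, mirrors the paper's proof: both telescope the difference of the two compositions, bound the one-step perturbation by $V_{n,r}(1+x)^b = O(n^{-2})$, and then observe that the Jacobian of the remaining composition grows only polynomially in $N=\lfloor\log n\rfloor$ because the orbit lingers near the marginally repelling fixed point at $0$ --- exactly the observation that defeats the naive exponential Lipschitz bound. The genuine difference lies in how that polynomial Jacobian bound is obtained. The paper imports it from~\cite{Clark1}: it writes $\frac{d}{dx}M^{k}(x)=(k+1)^2 D_k\big(M^k(x)\big)$ with $D_k(y)=\frac{1}{(k+1)^2}\prod_{1\leq \ell\leq k}\big(1+M^{-\ell}(y)\big)^{b-1}$ expressed through \emph{backward} iterates of the orbit's endpoint, and invokes the uniform boundedness of the $D_k$ (Lemma~\ref{LemMaybe}); since $D_k$ is monotone in its argument, only the single endpoint $M_{n,r}^{\ell}(A_{n,r})$ needs to be controlled, which the paper does with a stopping-index bootstrap ($\ell^*_{n,r}$). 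You instead derive the polynomial bound from scratch by inverting the forward recursion $v_{m+1}=v_m+v_m^2/\kappa^2+O(v_m^3)$ to get $v_m\approx \kappa^2/(N-m)$, whence $\prod_m (1+v_m)^{b-1}\approx (N-k)^{(b-1)\kappa^2}=(N-k)^2$, recovering the paper's quadratic prefactor. This is more self-contained, but it shifts the burden onto your final bootstrap: your derivative is evaluated along the forward orbit of $\xi_k\approx u_{k+1}$ rather than of $v_{k+1}$, and $M^{j}$ amplifies the discrepancy by precisely the polynomial factor you are in the middle of bounding. The argument does close --- the accumulated error $|u_k-v_k|=O\big((\log^3 n)/n^2\big)$ is negligible against the orbit scale $v_k\gtrsim \kappa^2/\log n$, so a simultaneous induction on $k$ (closeness of the orbits together with the derivative bound) goes through --- but this step should be written as a genuine induction or stopping-time argument; the one-sentence appeal to an ``already-derived bound'' reads circularly as stated, and is the one place where your write-up is materially thinner than the paper's.
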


\begin{remark}\label{RemarkM} Note that  $M^{\lfloor \log n\rfloor }\big(M_{n,r}^{n-\lfloor \log n\rfloor }(0)\big)$  converges to $R(r)$ as $n\uparrow \infty$.  This follows from Proposition~\ref{PropVar} since $M_{n,r}^{n-\lfloor \log n\rfloor }(0) $, which is equal to the variance of $W_{n-\lfloor \log n\rfloor}^{\omega}\big( \beta_{n,r}    \big) $, has  the large-$n$ asymptotics~(\ref{Unflat}) by Lemma~\ref{LemMtilde}.
\end{remark}

In the proof of Lemma~\ref{LemmaMMaps}, we will use Lemma~\ref{LemMaybe} below, which is a  result from~\cite[Lemma 2.2(iv)]{Clark1}.  Notice that applying the chain rule to the $k$-fold composition of $M(x)=\frac{1}{b}\big[(1+x)^b-1   \big]$ yields
\begin{align*}
 \frac{d}{dx}M^{k}(x)\,=\,\prod_{1\leq j \leq k} \Big( 1+M^{j-1}(x)  \Big)^{b-1}\,=\,(k+1)^2D_{k}\big(M^{k}(x) \big)  \,,   
 \end{align*}
where the function  $D_k:[0,\infty)\rightarrow [0,\infty)$ is defined by
\begin{align}\label{DefDk}
D_{k}(y)\,=\, \frac{1}{(k+1)^2}\prod_{1\leq \ell \leq k} \Big(1+M^{-\ell}(y)\Big)^{b-1}\,. 
\end{align}
In the above,  $M^{-\ell}$ denotes the $\ell$-fold composition of the function inverse of the map $M$. The following lemma gives us uniform bounds for the sequence in $k\in \mathbb{N}_0$ of functions $D_{k}$.
\begin{lemma}\label{LemMaybe}  The sequence of functions $(D_{k})_{k\in \mathbb{N}_0} $ converges  uniformly over any bounded subinterval of $[0,\infty)$ to a limit function $D$.  In particular, $F(L):=\sup_{k\in \mathbb{N}_0}\sup_{x\in [0,L]}D_{k}(x)$ is finite for any $L>0$.
\end{lemma}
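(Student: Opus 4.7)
The plan is to analyze $\log D_k(y)$ using the asymptotic expansion of $R$ provided by Proposition~\ref{PropVar}. The functional equation $M(R(r))=R(r+1)$ lets us write $M^{-\ell}(y)=R(r_y-\ell)$ for $y>0$, where $r_y:=R^{-1}(y)$; this is permitted because $R:\R\to(0,\infty)$ is a continuously differentiable, strictly increasing bijection. For $y$ in a compact subset of $(0,\infty)$, the parameter $r_y$ ranges over a compact interval, so part~(II) of that proposition yields the large-$\ell$ asymptotic
\begin{equation*}
M^{-\ell}(y)\,=\,\frac{\kappa^2}{\ell-r_y}\bigg(1+\frac{\eta\log(\ell-r_y)}{\ell-r_y}\bigg)+\mathit{O}\bigg(\frac{\log^2\ell}{\ell^3}\bigg),
\end{equation*}
with the $\mathit{O}$-constant uniform in $y$.

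The core step is to establish convergence of $D_k(y)$. From the definition~(\ref{DefDk}),
\begin{equation*}
\log\frac{D_{k+1}(y)}{D_k(y)}\,=\,2\log\frac{k+1}{k+2}\,+\,(b-1)\log\!\big(1+M^{-(k+1)}(y)\big).
\end{equation*}
Expanding each logarithm to order $\mathit{O}(1/k^2)$ and using the displayed asymptotic for $M^{-(k+1)}(y)$ reduces this to
\begin{equation*}
\log\frac{D_{k+1}(y)}{D_k(y)}\,=\,-\frac{2}{k+1}\,+\,\frac{(b-1)\kappa^2}{k+1-r_y}\,+\,\mathit{O}\!\bigg(\frac{\log k}{k^2}\bigg).
\end{equation*}
The crucial algebraic identity $(b-1)\kappa^2=2$ (since $\kappa^2=2/(b-1)$) makes the two leading $\mathit{O}(1/k)$ terms cancel up to $\mathit{O}(1/k^2)$, leaving $\log\big(D_{k+1}(y)/D_k(y)\big)=\mathit{O}(\log k/k^2)$ uniformly in $y$ on compact subsets of $(0,\infty)$. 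Since $\sum_k \log k/k^2$ converges, the sequence $\log D_k(y)$ is uniformly Cauchy and converges to a finite limit $\log D(y)$ on every such compact subinterval.

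To extend uniform convergence to bounded subintervals of $[0,\infty)$, observe that each $D_k$ is increasing in $y$ because every factor $1+M^{-\ell}(y)$ is, so the limit $D$ is increasing on $(0,\infty)$. At $y=0$ we have $D_k(0)=1/(k+1)^2\to 0$, and defining $D(0):=0$ gives a nondecreasing limit; continuity of $D$ at $0$ follows because the partial-sum expression for $\log D(y)$ obtained above depends on $y$ only through $r_y$ via convergent series, with the leading $r_y$-dependent contribution proportional to $r_y\to-\infty$ as $y\downarrow 0$, which forces $D(y)\to 0$. Combining uniform convergence on $[\varepsilon,L]$ for every $\varepsilon>0$, monotonicity of each $D_k$ in $y$, pointwise convergence at $0$, and continuity of $D$ at $0$ yields uniform convergence on $[0,L]$ via a standard three-epsilon argument. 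The boundedness $F(L)<\infty$ is then immediate from uniform convergence on $[0,L]$ together with $\sup_{y\in[0,L]}D(y)=D(L)<\infty$.

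The main obstacle is the cancellation in the middle paragraph: it requires exactly the leading order $\kappa^2/(-r)$ of $R$ at $-\infty$, not merely a qualitative decay, together with the precise identity $\kappa^2=2/(b-1)$; any weaker input would leave an unsummable $\mathit{O}(1/\ell)$ residual and prevent convergence of $\log D_k(y)$. A secondary technical point is ensuring that the $\mathit{O}(\log k/k^2)$ error is genuinely uniform in $y$ on compact subsets of $(0,\infty)$, which amounts to verifying the uniformity of the $R$-asymptotic from Proposition~\ref{PropVar} under bounded shifts of the argument.
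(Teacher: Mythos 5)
Your argument is correct, but it necessarily differs from the paper's treatment because the paper does not prove Lemma~\ref{LemMaybe} at all: it imports the statement verbatim from \cite[Lemma 2.2(iv)]{Clark1}. You instead give a self-contained derivation from Proposition~\ref{PropVar}, and the mechanism you isolate is the right one: writing $M^{-\ell}(y)=R(r_y-\ell)$ with $r_y=R^{-1}(y)$ (legitimate, since $R$ is an increasing bijection of $\R$ onto $(0,\infty)$), the increment $\log\big(D_{k+1}(y)/D_k(y)\big)$ is the sum of $-\tfrac{2}{k+1}+\mathit{O}(k^{-2})$ from the prefactor and $(b-1)\kappa^2/(k+1-r_y)+\mathit{O}(\log k/k^{2})$ from the new factor, and the identity $(b-1)\kappa^2=2$ collapses the two $\mathit{O}(1/k)$ terms to $2r_y/\big((k+1)(k+1-r_y)\big)=\mathit{O}(1/k^{2})$, uniformly for $y$ in compact subsets of $(0,\infty)$; summability then gives uniform convergence of $\log D_k$ there. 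This makes visible why the normalization $(k+1)^{-2}$ in~(\ref{DefDk}) is exactly the right one, which the paper's citation leaves opaque. Two small remarks. First, the finiteness of $F(L)$ does not require uniform convergence near $0$: each $D_k$ is increasing, so $\sup_{x\in[0,L]}D_k(x)=D_k(L)$, and pointwise convergence at the single point $L$ already yields $\sup_k D_k(L)<\infty$. Second, your justification of continuity of $D$ at $0$ is the one imprecise spot: the $r_y$-dependent part of $\log D(y)$ is not ``proportional to $r_y$'' but behaves like $-2\log(1-r_y)+\mathit{O}(1)$ (comparing $\sum_{\ell\le k}\tfrac{2}{\ell-r_y}$ with $2\log(k+1)$ leaves a digamma-type term $-2\psi(1-r_y)$), which still tends to $-\infty$ as $r_y\downarrow-\infty$; so the conclusion $D(y)\to 0=D(0)$, and hence the P\'olya/Dini monotonicity argument for uniform convergence on all of $[0,L]$, goes through.
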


\begin{proof}[Proof of Lemma~\ref{LemmaMMaps}] Define $A_{n,r}:=M_{n,r}^{n-\lfloor \log n\rfloor }(0)$.  By Remark~\ref{RemarkM}, $M^{\lfloor \log n\rfloor  }(A_{n,r})$ converges to $R(r)$ as $n\uparrow \infty$. For any $\ell \in \{0,\ldots, \lfloor \log n\rfloor\}$, the definition of $A_{n,r}$ implies that
\begin{align*}
M_{n,r}^{\ell+n-\lfloor \log n\rfloor }(0)-M^{\ell }\left(M_{n,r}^{n-\lfloor \log n\rfloor }(0)\right)
\,=\,&\,M_{n,r}^{\ell} (A_{n,r})\,-\,M^{\ell }(A_{n,r})\\
  \,=\,&\,\sum_{1 \leq k \leq \ell}     \bigg( M_{n,r}^{ k }\Big(M^{\ell-k }(A_{n,r})\Big)\,-\,M^{k-1 }_{n,r}\Big(M^{\ell-k+1 }(A_{n,r})\Big)\bigg)\,. \nonumber  
\end{align*}
 By the mean value theorem, there exists a point $y_k $ in the interval $\Big(M^{\ell-k+1 }(A_{n,r}), M_{n,r}\big(M^{\ell-k }(A_{n,r})\big)\Big)$ for each $k\in \{1,\ldots,\ell\}$ such that the above is equal to
\begin{align}
\,\sum_{1 \leq k \leq \ell} \bigg(  M_{n,r}\Big(M^{\ell-k }(A_{n,r})\Big)\,-\,M\Big(M^{\ell-k }(A_{n,r}) \Big) \bigg)  \frac{d}{dx}M_{n,r}^{ k-1 }(x)\Big|_{x=y_{k} }\,. \nonumber 
\end{align}
Since the derivative of $M_{n,r}^{k-1 } $ is increasing and $M_{n,r}(x)\geq M(x)$ for $x\geq 0$, the above is bounded by
\begin{align}
\sum_{1 \leq k \leq  \ell} \underbrace{\bigg(  M_{n,r}\Big(M^{\ell-k }(A_{n,r})\Big)\,-\,M\Big(M^{\ell-k }(A_{n,r}) \Big) \bigg)}_{ \mathbf{(I)}} \underbrace{\frac{d}{dx}M_{n,r}^{ k-1 }(x)\Big|_{x= M_{n,r}\big(M^{\ell-k }(A_{n,r})\big)  }}_{\mathbf{(II)}  } \,. \label{Pebble}
\end{align}
We will return to~(\ref{Pebble}) after obtaining bounds for the terms $\mathbf{(I)}$ and $\mathbf{(II)}$ individually.\vspace{.3cm}

\noindent \textit{Bound for (I):} The difference between the functions $M_{n,r}$ and $M$ has the bound,
\begin{align}\label{ChangeM}
  M_{n,r}(x)\,-\,M(x) \,=\,\frac{1}{b}(1 +x  )^{b}\Big[\big(1+V_{n,r}\big)^{b-1}-1   \Big]\,<\,V_{n,r} (1 +x  )^{b}\,,
\end{align}
where the inequality holds for large enough $n$ since $V_{n,r}$ is vanishing.  Thus, for large  $n$ we have
\begin{align}
 M_{n,r}\Big(M^{\ell-k }(A_{n,r})\Big)\,-\,M\Big(M^{\ell-k }(A_{n,r}) \Big)\,\leq \,& \, V_{n,r}\Big(1 + M^{\ell-k }(A_{n,r})   \Big)^{b}\nonumber  \\
 \,\leq \,& \, V_{n,r}\Big(1 + M^{\lfloor \log n \rfloor }(A_{n,r})   \Big)^{b}\nonumber \\
 \,<\,& \, 2V_{n,r}\big(1+R(r)   \big)^{b} \,.\label{Zip}
\end{align}
The second inequality uses that $\ell\leq \lfloor \log n\rfloor $ and $x\leq M(x)$ for all $x\geq 0$, and the last inequality holds for large enough $n$ since $M^{\lfloor \log n \rfloor }(A_{n,r})  $ converges to $R(r)$ as $n\uparrow \infty$.
 \vspace{.3cm}

\noindent \textit{Bound for (II):} By the chain rule, the derivative of $M_{n,r}^{ k }$ can be written in the form
\begin{align}
\frac{d}{dx}M_{n,r}^{ k }(x)\,=\,\big(1+ V_{n,r} \big)^{k(b-1)}\prod_{0 \leq j \leq k-1} \Big(1+M_{n,r}^{j}(x)\Big)^{b-1}\,.\nonumber 
\end{align}
Since $V_{n,r}=\mathit{O}\big(\frac{1}{n^2}\big)$ and $k\leq \lfloor \log n\rfloor $, the term $\big(1+ V_{n,r} \big)^{k(b-1)}$ is smaller than $2$ for large $n$.  Moreover, writing  $M_{n,r}^{j}=M_{n,r}^{-(k-j)}M_{n,r}^{k} $ and changing the index of the product to $l=k-j$ gives us the following bound when  $n$ is large:
\begin{align}
\frac{d}{dx}M_{n,r}^{ k }(x) \,\leq \,&\,2\prod_{1\leq l\leq k} \left(1+M_{n,r}^{-l}\Big(M_{n,r}^{ k }(x)\Big)\right)^{b-1} \nonumber \\
 \,\leq \,&\,2\prod_{1\leq l\leq k} \Big(1+M^{-l}\Big(M_{n,r}^{ k }(x)\Big)\Big)^{b-1}\nonumber \\ \,= \, &\, 2(k+1)^2 D_k\Big(M_{n,r}^{ k }(x)\Big)\,,\label{Tibit}  
\end{align}
in which the equality uses the definition~(\ref{DefDk}) of the  function $D_k:[0,\infty)\rightarrow [0,\infty)$. 
To see the second inequality above, notice that  $M_{n,r}^{-1}(y)\leq M^{-1}(y)$ holds for all $y\geq 0$ since $M_{n,r}(x)\geq M(x)$ holds for all $x\geq 0$.  

 An application of~(\ref{Tibit}) to the term $\mathbf{(II)}$ gives us
\begin{align}
\frac{d}{dx}M_{n,r}^{ k-1 }(x)\Big|_{x= M_{n,r}\big(M^{\ell-k }(A_{n,r})\big)  } \,\leq \,\,&2k^2 D_{k-1}\Big(M_{n,r}^{ k }\Big(M^{\ell-k }(A_{n,r})\Big)\Big)\nonumber \\
\,\leq \,\,&2k^2 D_{k-1}\Big(M_{n,r}^{  \ell}(A_{n,r})\Big) \,,\label{Zap}
\end{align}
the second inequality again using that  $M_{n,r}(x)\geq M(x)$ for all $x\geq 0$.

\vspace{.3cm}

\noindent \textit{Returning to~(\ref{Pebble}):} The difference between $M_{n,r}^{\ell} (A_{n,r})$ and $M^{\ell}(A_{n,r})$ is bounded by~(\ref{Pebble}), and we can apply~(\ref{Zip}) and~(\ref{Zap}) to bound~(\ref{Pebble}). Thus, for large enough $n$ and all $0\leq \ell \leq \lfloor \log n\rfloor$ we have
\begin{align}
M_{n,r}^{\ell} (A_{n,r})\,-\,M^{\ell}(A_{n,r}) \,\leq \,\,& 4 V_{n,r}\big(1+R(r)   \big)^{b}\sum_{k=1}^{\ell }   k^2 D_{k-1}\Big(M_{n,r}^{  \ell}(A_{n,r})\Big) \nonumber\\
\,\leq \,\,& c\frac{\lfloor \log n\rfloor^3}{n^2} F\Big(M_{n,r}^{  \ell}(A_{n,r}) \Big)\,,\label{Dip}
\end{align}
for $F(L):=\sup_{k\in \mathbb{N}_0}\sup_{x\in [0,L]}D_k(x)$.   Recall that $F$ is finite-valued by  Lemma~\ref{LemMaybe}. Since $\ell\leq \lfloor \log n \rfloor $ and $V_{n,r}\propto \frac{1}{n^2}$ with large $n$, there is a $c>0$ such that the second inequality above holds for all $n\in \mathbb{N}$.

 Let $\ell^*_{n,r}\in \mathbb{N}$ be the minimum of $\ell=\lfloor \log n\rfloor$ and the largest $\ell$ such that $M_{n,r}^{\ell} (A_{n,r})\leq 2M^{\ell }(A_{n,r})$. Applying~(\ref{Dip}) with $\ell=\ell^*_{n,r}  $ yields the following inequality.
\begin{align}
M_{n,r}^{\ell^*_{n,r}} (A_{n,r})\,-\,M^{\ell^*_{n,r}}(A_{n,r})\,\leq\,\,& c\frac{\lfloor \log n\rfloor^3}{n^2} F\Big(M_{n,r}^{ \ell^*_{n,r}}(A_{n,r}) \Big)\nonumber \\
\,\leq \,\,&c\frac{\lfloor \log n\rfloor^3}{n^2} F\big(3R(r)\big) \,=\,\mathit{O}\bigg(\frac{\log^3 n}{n^2}  \bigg) \label{REW}
\end{align}
To see the second inequality above,  note that since $M_{n,r}^{\ell^*_{n,r}} (A_{n,r})\leq 2M^{\ell^*_{n,r} }(A_{n,r})\leq 2M^{\lfloor \log n\rfloor }(A_{n,r})$ and $M^{\lfloor \log n\rfloor }(A_{n,r})$ converges to $R(r)$ as $n\uparrow \infty$, the value $M_{n,r}^{\ell^*_{n,r}} (A_{n,r})$ is smaller than $3R(r)$ for $n\gg 1$.  Hence the inequality~(\ref{REW}) holds for large enough $n$ because  $F$ is nondecreasing.

We can apply~(\ref{REW}) to get the  inequality below.
\begin{align}
M_{n,r}^{\ell^*_{n,r}+1} (A_{n,r})\,=\,M_{n,r}\Big(M_{n,r}^{\ell^*_{n,r}} (A_{n,r})\Big)\,\leq \,\,&M_{n,r}\bigg(M^{\ell^*_{n,r}} (A_{n,r})\,+\,\mathit{O}\bigg(\frac{\log^3 n}{n^2}  \bigg) \bigg) \nonumber
\\  \,=\,\,&M_{n,r}\Big(M^{\ell^*_{n,r}} (A_{n,r})\Big)\,+\,\mathit{O}\bigg( \frac{ \log^3 n}{n^2}  \bigg) \nonumber \\
\,=\,\,& M^{\ell^*_{n,r}+1} (A_{n,r})\,+\,\mathit{O}\bigg( \frac{ \log^3 n}{n^2}  \bigg)
 \label{TheThe}
\end{align}
Note that $M^{\ell^*_{n,r}} (A_{n,r})$ is smaller than $2R(r)$ for large enough $n$ because $M^{\ell^*_{n,r}} (A_{n,r})\leq M^{\lfloor \log n \rfloor} (A_{n,r}) $ and $M^{\lfloor \log n \rfloor} (A_{n,r}) $ converges to $R(r)$ as $n\uparrow \infty$. Thus, the second equality above holds since the derivative of $M_{n,r}$ is uniformly bounded over bounded intervals.
For the third equality,~(\ref{ChangeM}) implies that  replacing   $M_{n,r}$ by $M$ yields a negligible error.

However,  $M^{\ell^*_{n,r}+1} (A_{n,r})\geq A_{n,r}$, and 
\begin{align*}
    A_{n,r}\,=\, M^{n-\lfloor \log n \rfloor} (0) \,=\, \textup{Var}\Big( W_{n-\lfloor \log n\rfloor}^{\omega}(\beta_{n,r}) \Big) \, \geq \, \frac{\kappa^2}{\lfloor\log n \rfloor},
\end{align*}
 where the inequality holds for large $n$ by~(\ref{Unflat}). This combined with~(\ref{TheThe}) precludes the possibility that $M_{n,r}^{\ell^*_{n,r}+1} (A_{n,r})> 2M^{\ell^*_{n,r}+1} (A_{n,r})$ when $n$ is large.  It follows from the definition of $\ell^*_{n,r}$ that $\ell^*_{n,r}:=\lfloor \log n\rfloor$, and thus~(\ref{REW}) implies that the difference between $M_{n,r}^{\lfloor \log n\rfloor} (A_{n,r})$ and $M^{\lfloor \log n\rfloor}(A_{n,r})$ vanishes with large $n$.
\end{proof}

\subsection{Proof of Lemma~\ref{LemmaHM}}\label{SecLemmaHM}

\begin{proof} It suffices to show that the (uncentered) positive integer moments of $W_{n-\lfloor \log n\rfloor}^{\omega}(\beta_{n,r})$ all converge to one as $n\uparrow \infty$.  For $m\in \{2,3,\ldots\}$, $n\in \mathbb{N}$, $r\in \R$, and $k\in \mathbb{N}_0$ define
$$ \mu_{n,r}^{(m)}(k)\,:=\,\mathbb{E}\Big[ \big(W_{k}^{\omega}(\beta_{n,r})\big)^m \Big]    \hspace{1cm}  \text{and} \hspace{1cm} \nu_{n,r}^{(m)}\,:=\,\mathbb{E}\Big[ \big(\textup{exp}\big\{ \beta_{n,r}\omega -\lambda( \beta_{n,r})\big\} \big)^m \Big]  \,.   $$
Note that $\mu_{n,r}^{(m)}(0)=1$ since $W^{\omega}_{0}(\beta_{n,r})=1$ by definition, and $\mu_{n,r}^{(m)}(k),\nu_{n,r}^{(m)}\geq 1$ by Jensen's inequality. We obtain the following recursive equation in $k\in \mathbb{N}$ by evaluating the $m^{th}$ moment of both sides of the distributional equality~(\ref{PartHierSymmII}):
\begin{align}\label{FT}
\mu_{n,r}^{(m)}(k+1)\,=\, \frac{1}{b^{m-1}}  \big(\mu_{n,r}^{(m)}(k)\big)^{b}\big(\nu_{n,r}^{(m)}\big)^{b-1}\,+\, \mathbf{P}_m\Big( \big(\mu_{n,r}^{(\ell)}(k)\big)^{b}\big(\nu_{n,r}^{(\ell)}\big)^{b-1}\,;\,\, \ell\in \{2,\ldots, m-1\}  \Big)\,,
\end{align}
where $\mathbf{P}_m( y_{2},\ldots,  y_{m-1} )$ is a  polynomial with nonnegative coefficients that sum to $1-\frac{1}{b^{m-1}}$. In particular, $\mathbf{P}_m( y_{2},\ldots,  y_{m-1} )=1-\frac{1}{b^{m-1}}$ when evaluated at $( y_{2},\ldots,  y_{m-1} )=(1,\ldots, 1) $.  Moreover, $ 1-\frac{1}{b^{m-1}}$ is a lower bound for the $\mathbf{P}_m$ term in~(\ref{FT}) since $\mu_{n,r}^{(m)}(k),\nu_{n,r}^{(m)}\geq 1$.

We will use induction to prove that  $\max_{0\leq k\leq  n-\lfloor \log n\rfloor } \big|\mu_{n,r}^{(m)}(k)-1\big|$ vanishes as $n\uparrow \infty$ for each $m\in \{2,3,\ldots\}$.  As a consequence of Lemma~\ref{LemMtilde}, the value $\mu_{n,r}^{(2)}\big(n-\lfloor \log n\rfloor\big)$ converges to one as $n\uparrow \infty$. Since $ \big(\mu_{n,r}^{(2)}(k)\big)_{k\in \mathbb{N}_0} $ is an increasing sequence and $\mu_{n,r}^{(2)}(0)=1$, it follows that $ \max_{0\leq k\leq  n-\lfloor \log n\rfloor }\big| \mu_{n,r}^{(2)}(k)-1\big|$ vanishes as $n\uparrow \infty$.  Suppose, for the purpose of a strong induction argument, that 
\begin{align*}
\max_{0\leq k\leq  n-\lfloor \log n\rfloor }\big|\mu_{n,r}^{(\ell)}(k)-1\big| \hspace{.2cm}\stackrel{n \to \infty}{\longrightarrow }\hspace{.2cm} 0
\end{align*} 
for each $\ell\in\{2,\ldots,m\}$.  Note that $\nu_{n,r}^{(\ell)}$ converges to one as $n\uparrow \infty$ for each $\ell\in \mathbb{N}$ since $\beta_{n,r}$ vanishes with large $n$.  Fix some $\epsilon\in (0,1)$.  Since $\mathbf{P}_{m+1}$ is  continuous and $\mathbf{P}_{m+1}(1,\ldots, 1)=1-\frac{1}{b^{m}}$, we can choose  $n\in \mathbb{N}$ large enough such  that 
\begin{align}\label{IndCond}
\max_{0\leq k\leq  n-\lfloor \log n\rfloor }  \mathbf{P}_{m+1}\Big( \big(\mu_{n,r}^{(\ell)}(k )\big)^{b}\big(\nu_{n,r}^{(\ell)}\big)^{b-1}\,;\,\, \ell\in \{2,\ldots, m\}  \Big)\,\leq \,(1+\epsilon)\Big(1-\frac{1}{b^m}  \Big)\,.
\end{align}
Let $k^*_{n,r,\epsilon}$ be the minimum of $k=n-\lfloor \log n\rfloor $ and the smallest $k\in \mathbb{N}$
with
\begin{align} \label{Smallest}
\big(\mu_{n,r}^{(m+1)}(k)\big)^{b-1}\big(\nu_{n,r}^{(m+1)}\big)^{b-1}\,> \,1+\epsilon \,.
\end{align}
By~(\ref{FT}) and the definition of $k^*_{n,r,\epsilon}$,  we have the   recursive inequality in $k\in \{0,\ldots, k^*_{n,r,\epsilon}-1\}$ below.
\begin{align}\label{Recur<}
\mu_{n,r}^{(m+1)}(k+1)\,\leq \, \frac{1+\epsilon}{b^{m}}  \mu_{n,r}^{(m+1)}(k)\,+\, \mathbf{P}_{m+1}\Big( \big(\mu_{n,r}^{(\ell)}(k)\big)^{b}\big(\nu_{n,r}^{(\ell)}\big)^{b-1}\,;\,\, \ell\in \{2,\ldots, m\}  \Big)
\end{align}
Applying~(\ref{Recur<}) $k^*_{n,r,\epsilon}$ times and using that $\mu_{n,r}^{(m+1)}(0) =1$ yields the first inequality below.
\begin{align}
\max_{0\leq k\leq  k^*_{n,r,\epsilon} } &\mu_{n,r}^{(m+1)}(k) \nonumber \,\\ \leq \,&\, \Big( \frac{1+\epsilon}{b^m}  \Big)^{ k^*_{n,r,\epsilon} }\,+\, \sum_{0 \leq k < k^*_{n,r,\epsilon} }\Big( \frac{1+\epsilon}{b^m}  \Big)^{ k^*_{n,r,\epsilon}-1-k }\mathbf{P}_{m+1}\Big( \big(\mu_{n,r}^{(\ell)}(k)\big)^{b}\big(\nu_{n,r}^{(\ell)}\big)^{b-1}\,;\,\, \ell\in \{2,\ldots, m\}  \Big) \nonumber \\
\,\leq \,& \,\frac{   1}{1-  \frac{1+\epsilon}{b^m}   } \underbracket{\max_{0\leq k\leq  n-\lfloor \log n\rfloor }  \mathbf{P}_{m+1}\Big( \big(\mu_{n,r}^{(\ell)}(k)\big)^{b}\big(\nu_{n,r}^{(\ell)}\big)^{b-1}\,;\,\, \ell\in \{2,\ldots, m\}  \Big)}\label{Liz}
\end{align}
Since $\mathbf{P}_{m+1}\Big( \big(\mu_{n,r}^{(\ell)}(k)\big)^{b}\big(\nu_{n,r}^{(\ell)}\big)^{b-1}\,;\,\, \ell\in \{2,\ldots, m\}  \Big)$ is bounded from below by  $ 1-\frac{1}{b^{m}}$, geometric summation gives us the second inequality above. 
The bracketed term converges to $1-\frac{1}{b^{m}}$ as $n\uparrow \infty$ by the same reasoning as for~(\ref{IndCond}).  We will prove that $k^*_{n,r,\epsilon}= n-\lfloor \log n\rfloor$ holds for large enough $n$ by showing that the condition~(\ref{Smallest}) cannot hold for $k\leq n-\lfloor \log n\rfloor$ when $n\gg 1$.  Notice that
\begin{align*}
\max_{0\leq k\leq   k^*_{n,r,\epsilon} } \Big(&\mu_{n,r}^{(m+1)}(k)\Big)^{b-1}\big(\nu_{n,r}^{(m+1)}\big)^{b-1}\\ &\,\leq \,\underbrace{\bigg(\frac{ 1 }{1-  \frac{1+\epsilon}{b^m}   } \max_{0\leq k\leq  n-\lfloor \log n\rfloor }  \mathbf{P}_{m+1}\Big( \big(\mu_{n,r}^{(\ell)}(k)\big)^{b}\big(\nu_{n,r}^{(\ell)}\big)^{b-1}\,;\,\, \ell\in \{2,\ldots, m\}  \Big)\bigg)^{b-1}   \big(\nu_{n,r}^{(m+1)}\big)^{b-1}}_{ \text{\large This expression converges to $\Big(\frac{ 1 -\frac{1}{b^m} }{1-  \frac{1+\epsilon}{b^m}   }\Big)^{b-1}$ as $n\uparrow \infty$.}}\,.
\end{align*}
Moreover, since $m\geq 2$, the following inequality holds for small $\epsilon>0$:
$$  \bigg(\frac{ 1 -\frac{1}{b^m} }{1-  \frac{1+\epsilon}{b^m}   }\bigg)^{b-1}\,=\,\bigg(1+\frac{  \epsilon }{b^m-1-\epsilon   }\bigg)^{b-1}  \, < \, 1+\epsilon\,.$$  Thus $k^*_{n,r,\epsilon}$ does not satisfy~(\ref{Smallest}) when $n$ is large, and therefore $k^*_{n,r,\epsilon}=n-\lfloor \log n\rfloor$ for large $n$.  Going back to~(\ref{Liz}) with $k^*_{n,r,\epsilon}=n-\lfloor \log n\rfloor$ and applying~(\ref{IndCond}), we get
\begin{align*}
\limsup_{n \to \infty}\max_{0\leq k\leq  n-\lfloor \log n\rfloor } \mu_{n,r}^{(m+1)}(k)\,\leq \,\frac{ (1+\epsilon)(1 -\frac{1}{b^m}) }{1-  \frac{1+\epsilon}{b^m}   }\,.
\end{align*}
Since  $\epsilon>0$ is arbitrary and $\mu_{n,r}^{(m+1)}(k)\geq 1$, the sequence $\big(\max_{0\leq k\leq  n-\lfloor \log n\rfloor } \big|\mu_{n,r}^{(m+1)}(k)-1\big|\big)_{n\in \mathbb{N}}$  is vanishing. Therefore, by induction, $\max_{0\leq k\leq  n-\lfloor \log n\rfloor } \big|\mu_{n,r}^{(m)}(k)-1\big|$ converges to zero for each $m\in \{2,3,\ldots\}$, which completes the proof.
\end{proof}


\begin{thebibliography}{99}

\bibitem{US} T.\ Alberts, J.\ Clark, S.\ Kocic: \emph{The intermediate disorder regime for a directed polymer model on a hierarchical lattice}, Stoch. Process. Appl. \textbf{127}, 3291-3330 (2017).   



\bibitem{alberts} T.\ Alberts, K.\ Khanin, J.\ Quastel: \emph{The intermediate disorder regime for directed polymers in dimension $1+1$}, Ann. Probab. \textbf{42}, No. 3, 1212-1256 (2014).


\bibitem{alberts2} T.\ Alberts, K.\ Khanin, J.\ Quastel: \emph{The continuum directed random polymer}, J.\ Stat.\ Phys.\ \textbf{154}, No.\ 1-2, 305-326 (2014).



\bibitem{Berker} A.N. Berker, S. Ostlund: \emph{Renormalisation-group calculations of finite systems}, J. Phys. C \textbf{12}, 4961-4975 (1979).


\bibitem{BC} L.\ Bertini, N.\ Cancrini: \emph{The two-dimensional stochastic heat equation: renormalizing a multiplicative
noise}, J.\ Phys.\ A: Math. Gen.\ \textbf{31}, 615, (1998).



\bibitem{Pavel1} P.M. Bleher, M.Y. Lyubich:  \emph{Julia sets and complex singularities in hierarchical Ising models}, Commun. Math. Phys. \textbf{141}, 453–474 (1991).



\bibitem{Pavel2} P. Bleher, M. Lyubich, R. Roeder: \emph{Lee-Yang zeros for the DHL and 2D rational dynamics, I. Foliation of the physical cylinder}, J. de Math\'ematiques Pures et Appliqu\'ees \textbf{107}, 491-590 (2017).



\bibitem{Pavel} P. Bleher, E. \v{Z}alys:  \emph{Asymptotics of the susceptibility for the Ising model on the hierarchical lattices}, Commun. Math. Phys. \textbf{120}, 409–436 (1989).




\bibitem{CSZ1} F.\ Caravenna, R.\ Sun, N.\ Zygouras: \emph{Universality in marginally relevant disordered systems}, Ann.\ Appl.\ Probab. \textbf{27}, No.\ 5, 3050-3112 (2017).



\bibitem{CSZ2} F.\ Caravenna, R.\ Sun, N.\ Zygouras, \emph{The Dickman subordinator, renewal theorems, and disordered
systems},  Elect. Journ.  Prob. \textbf{24}, 1-48 (2019).



\bibitem{CSZ4}  F.\ Caravenna, R.\ Sun, N.\ Zygouras: \emph{On the moments of the (2+1)-dimensional directed polymer
and stochastic heat equation in the critical window}, Commun. Math. Phys. \textbf{372}, 385-440 (2019).


\bibitem{CSZ5}  F.\ Caravenna, R.\ Sun, N.\ Zygouras: \emph{The critical stochastic 2d heat flow},  \textup{arXiv:2109.03766}, 2021.


\bibitem{CSZ6}  F.\ Caravenna, R.\ Sun, N.\ Zygouras: \emph{The critical  2d stochastic heat flow is not a Gaussian multiplicative chaos}, 	\textup{arXiv:2206.08766} (2022).


\bibitem{Chen} Y.-T.\ Chen: \emph{The critical 2d delta-Bose gas as mixed-order asymptotics of planar Brownian motion},  \textup{arXiv:2105.05154} (2021). 

\bibitem{Clark1} J.T.\ Clark: \emph{High-temperature scaling limit for directed polymers on a hierarchical lattice with edge disorder}, J. Stat. Phys. \textbf{174}, No. 6, 1372-1403 (2019).


\bibitem{Clark3} J.T.\ Clark: \emph{Continuum directed random polymers on disordered hierarchical diamond lattices},  Stoch.\ Process.\ Appl.\ \textbf{130}, 1643-1668 (2020). 



\bibitem{Clark2} J.T.\ Clark: \emph{Weak-disorder limit at criticality for random directed polymers on  hierarchical graphs}, Commun.\,  Math.\, Phys.\, \textbf{386}, 651-710 (2021).


\bibitem{Clark4} J.T.\ Clark: \emph{Continuum models of directed polymers on disordered diamond fractals in the critical case}, Ann. Appl. Probab. \textbf{32}, 4186--4250 (2022).



\bibitem{Clark5} J.T.\ Clark: \emph{The conditional Gaussian multiplicative chaos   structure underlying a critical  continuum random  polymer model on a diamond fractal},   to appear in Annales de l'Institut Henri Poincaré (B).

    



\bibitem{Comets} F. Comets: \emph{Directed polymers in random environments},  Lecture Notes in Mathematics, \textbf{2075}, Springer, Cham, 2017.


\bibitem{Cook}J. Cook, B. Derrida: \emph{Polymers on disordered hierarchical lattices}, J. Stat. Phys. \textbf{57}, 89-139 (1989).




\bibitem{Derrida} B. Derrida, R.B. Griffiths: \emph{Directed polymers on disordered hierachical lattices}, Europhys. Lett. \textbf{8}, No. 2, 111-116 (1989).

\bibitem{Derrida2} B. Derrida, V. Hakim, J. Vannimenus: \emph{Effect of disorder on two-dimensional wetting}, J. Stat. Phys. \textbf{66}, 1189-1213 (1992).


\bibitem{Giacomin} G. Giacomin: \emph{Disorder and critical phenomena through basic probability models},  \'Ecole
d’\'Et\'e de Probabilit\'es de Saint-Flour XL - 2010, Lecture Notes in Mathematics,  \textbf{2025}, Springer, Heidelberg,  2011.


\bibitem{GLT}  G. Giacomin, H. Lacoin, F.L. Toninelli: \emph{Hierarchical pinning models, quadratic maps, and quenched disorder}, Probab. Theor. Rel. Fields \textbf{145}, (2009).


\bibitem{Goldstein} L. Goldstein: \emph{Normal approximation for hierarchical structures},   Ann.  Appl. Prob. \textbf{14}, 1950-1969 (2004).



\bibitem{GQT} Y.\ Gu, J.\ Quastel, L.\ Tsai: \emph{Moments of the 2d SHE at criticality}, Prob.\ Math.\ Phys.\ \textbf{2}, 179-219  (2021).

\bibitem{Hambly} B.M. Hambly, T. Kumagai: \emph{Diffusion on the scaling limit of the critical percolation cluster in the diamond hierarchical lattice}, Commun. Math. Phys. \textbf{295}, 29-69 (2010).


\bibitem{Kaufman} M. Kaufman, R.B. Griffiths: \emph{Exactly soluble Ising models on hierarchical lattices}, Phys. Rev. B, 496-498 (1981).

\bibitem{Lacoin0} H.\ Lacoin: \emph{New bounds for the free energy of directed polymers in dimension 1+1 and 1+2}, Commun. Math. Phys. \textbf{294}, 471-503 (2010).




\bibitem{Lacoin} H.\ Lacoin, G.\ Moreno:  \emph{Directed polymers on hierarchical lattices with site disorder}, Stoch.\ Proc.\ Appl.\ \textbf{120}, No.\ 4, 467-493 (2010).

\bibitem{Ruiz} P.A. Ruiz: \emph{Explicit formulas for heat kernels},   Commun. Math. Phys. \textbf{364}, 1305-1326 (2018).



\bibitem{Wehr} J. Wehr, J.-M. Woo: \emph{Central limit theorems for nonlinear hierarchical sequences of random variables},   J. Stat. Phys. \textbf{104}, 777-797 (2001).

\end{thebibliography}
\end{document}